\def\section{\@startsection{section}{1}%
\z@{.7\linespacing\@plus\linespacing}{.5\linespacing}%
{\bfseries\normalfont\scshape
	\centering
}}
\def\@secnumfont{\bfseries}
\newcommand{\newhat}{\scalebox{1.5}[.75]{\trimbox{0pt 1.1ex}{\textasciicircum}}}
\newcommand{\stretchedhat}[1]{\accentset{\newhat}{#1}}
\def\shat{\stretchedhat}
\numberwithin{equation}{section}
\newtheorem{Thm}{Theorem}[section]
\newtheorem{Lem}{Lemma}[section]
\newtheorem{Pro}{Proposition}[section]
\newtheorem{Rem}{Remark}[section]
\newtheorem{Ass}{Assumption}[section]
\newtheorem{Def}{Definition}[section]
\DeclareMathOperator*{\esssup}{ess\,sup}
\begin{document}
\title[ Control of Landau-Lifshitz-Bloch Equation]
{Time Optimal Control Problem for the Landau-Lifshitz-Bloch equation}
\author[Sidhartha Patnaik and  Kumarasamy Sakthivel]{Sidhartha Patnaik and Kumarasamy Sakthivel}
\address{Department of Mathematics \\
Indian Institute of Space Science and Technology (IIST) \\
Trivandrum- 695 547, INDIA}
\email{sidharthpatnaik96@gmail.com, sakthivel@iist.ac.in}
\curraddr{}
%\thanks{}
%\date{}
%\dedicatory{}

\begin{abstract}
This paper investigates the time-optimal control problem for the Landau-Lifshitz-Bloch (LLB) equation, a macroscopic model that characterizes magnetization dynamics in ferromagnetic materials across a wide temperature range, including near and above the Curie temperature. We analyze the LLB system on bounded domains in one, two, and three dimensions, establishing the existence of optimal controls that drive the magnetization to a desired target state within a minimal time frame. Utilizing a Lagrange multiplier approach and an adjoint-based framework, we derive first-order necessary optimality conditions. Furthermore, we establish second-order sufficient conditions for local optimality, addressing the mathematical challenges posed by the system's inherent nonlinearities and the nonlinear appearance of the control in the effective magnetic field. These results provide a rigorous theoretical basis for the rapid manipulation of magnetic states, offering insights into the fundamental limits of control for nonlinear diffusion-relaxation processes in magnetism. Such findings are essential for advancing high-speed magnetic memory technologies and optimizing thermal magnetic switching in next-generation storage technologies.

%This paper explores the time-optimal control problem for the Landau-Lifshitz-Bloch equation, a fundamental model for describing magnetization dynamics in magnetic materials. We study the Landau-Lifshitz-Bloch system on one, two, and three-dimensional bounded domains to develop strategies for achieving the desired magnetization within the optimal time frame. The analysis focuses on identifying key conditions for local optimality, including first-order necessary and second-order sufficient conditions. These control strategies are crucial for applications like ultrafast magnetic switching and spin dynamics, managing thermal effects, and improving energy efficiency.
\end{abstract}
\subjclass[2010]{35K20, 35Q56, 35Q60, 49J20}
\keywords{Landau-Lifshitz-Bloch equation, Magnetization dynamics, Optimal control, First-order optimality condition, Second-order optimality condition}
\maketitle

\section{Introduction}

The Landau-Lifshitz-Bloch equation is a macroscopic mathematical model that describes the dynamics of magnetization in ferromagnetic materials under the influence of external fields and thermal effects. The classical Landau-Lifshitz-Gilbert (LLG) equation only works at low temperatures and assumes that the magnetization magnitude stays the same. The LLB equation, on the other hand, can be used in high-temperature situations (see \cite{UADH}). It explicitly accounts for temperature-dependent phenomena such as longitudinal relaxation and variations in magnetization magnitude, making it particularly useful for studying magnetization behavior near and above the Curie temperature, where thermal fluctuations significantly impact magnetic dynamics. Because the LLB equation accounts for these temperature-dependent aspects, researchers widely use it to study ultrafast magnetization processes, especially those occurring at higher temperatures like heat-assisted magnetic recording (HAMR), where precise control of magnetization at elevated temperatures is essential (see, \cite{MKEG}). Moreover, in spintronics, the LLB equation plays an important role, where spin-dependent transport and ultrafast switching dynamics can unlock significant technological advancements.

The time-optimal control approaches applied to the LLB equation enable a detailed investigation into rapid magnetic switching processes driven by femtosecond laser pulses. Such techniques are particularly important for the emerging field of ultrafast spin dynamics, where rapid manipulation of spin states under thermal excitation is essential. Employing optimal control methodologies with the LLB model helps precisely determine the minimal switching times and required laser parameters, thus providing valuable insights that drive advancements in spintronic devices and ultrafast magnetic recording technologies (see \cite{UADH}).

Consider the magnetization $\bar{m}$, with initial saturation magnetization $\bar{m}_s^0$ at $t=0$, evolves according to the  LLB equation. If we normalize the spin polarization $m = \bar{m} / \bar{m}_s^0$,  the LLB system can be expressed as
\begin{equation} \label{MGE}
	m_t = \gamma m \times H_{\text{eff}} + D_L \frac{(m \cdot H_{\text{eff}})}{|m|^2} m - D_T \frac{m \times (m \times H_{\text{eff}})}{|m|^2},
\end{equation}
where  $|\cdot|$ denotes the Euclidean norm in $\mathbb{R}^3$,  $\gamma > 0$ is the gyromagnetic ratio, $D_L$ and $D_T$ are the longitudinal and transverse damping parameters. Moreover, the term $H_{\text{eff}}$ is the effective magnetic field, accounting for external fields, magnetic anisotropy, and thermal fluctuations.

Above the Curie temperature $T_C$, ferromagnetic materials transition into a paramagnetic state, disrupting magnetic ordering. In this regime, the damping parameters equalize, that is, $D_L = D_T = \kappa$, leading to a notable change in the material’s response to external fields.  
For longitudinal susceptibility $\chi_\|$ and an applied external field $u$, the effective field is given by 
\begin{equation} \label{H_EFF}
	H_{\text{eff}} = \Delta m - \frac{1}{\chi_\|} \left(1 + \frac{3}{5} \frac{T}{T - T_C} |m|^2 \right) m + u.
\end{equation}

Substituting \eqref{H_EFF} into \eqref{MGE}, using $D_L = D_T = \kappa$, and applying the vector identity $m \times (m \times H_{\text{eff}}) = (m \cdot H_{\text{eff}}) m - |m|^2 H_{\text{eff}}$, we obtain 
\begin{equation*}
	m_t = \nu m \times \Delta m + \nu m \times u + \kappa \left(\Delta m - \frac{1}{\chi_\|} \left(1 + \frac{3}{5} \frac{T}{T - T_C} |m|^2 \right) m + u \right).
\end{equation*}
For simplicity, we set the constants $\nu = \kappa = \chi_\| = \frac{3}{5} \frac{T}{T - T_C} = 1$. For more details about the derivation of the effective field, one may refer to \cite{KNL}.

For any smooth, bounded domain $\Omega\subset \mathbb{R}^n$, $n=1,2,3$ with initial magnetization $m_0$, control $u$, and homogeneous Neumann boundary condition, the LLB equation takes the following form:
\begin{equation}\label{NLP}
	\begin{cases}
		m_t- \Delta m= m \times \Delta m  + m \times u- \big(1+ |m|^2\big)\,m + u, \ \ \ (x,t)\in \Omega_T,\\
		
		\frac{\partial m}{\partial \eta}=0, \ \ \ \ \ \ \ (x,t) \in  \partial\Omega_T,\\
		
		m(\cdot,0)=m_0 \ \ \text{in} \ \Omega,
	\end{cases}	
\end{equation}
where $\eta$ is the outward unit  normal vector to the boundary $\partial \Omega$. For a detailed summary of the control problem formulation, one may refer to \cite{SPSK3}.

In order to derive the optimality conditions, we need solutions with higher regularity. To guarantee this regularity, we assume that the initial data  $m_0:\Omega \to \mathbb{R}^3$ satisfies the following conditions:
%Moreover, to get second-order optimality conditions, we need more regular solutions. Therefore, in order to get regular solutions, we have assumed that the initial data $m_0:\Omega\to\mathbb R^3$ satisfies the following conditions 
\begin{equation}\label{IC}
	%	\begin{cases}
		%		\text{for}\ n=1  \ \ \  \ \ \ \ \ \   	m_0 \in H^1(\Omega),\ \  |m_0|=1,  \\	
		m_0 \in H^2(\Omega),\ \ \ \frac{\partial m_0}{\partial \eta}=0 \ \ \text{on} \ \partial \Omega. 
		%	\end{cases}
\end{equation}

\noindent \textbf{Time Optimal Control Problem.}
Let $m_{\Omega}:\Omega \to \mathbb{R}^3$ be the desired magnetic moment and $\delta$ be a positive real number. We consider the time-optimal control of the LLB system corresponding to the objective functional:
\begin{equation}\label{T-CF}
	\mathcal J(u):= \frac{1}{2} \ T^*(u)^2 +\frac{1}{2} \|u\|^2_{W^{1,2}(0,T;H^1(\Omega);H^1(\Omega)^*)},			
\end{equation}
where $T^*(u):= \inf\ \big\{t\in [0,T]  \ :\ \|m(t)-m_{\Omega}\|_{L^2(\Omega)} \leq \delta\big\}.$ If $T^*(u)$ exists for some control $u$, we will refer to it as the ``minimal time" associated with that control. In order to exclude the trivial case, we impose the condition $\|m_0-m_{\Omega}\|_{L^2(\Omega)}>\delta$, where $\delta$ comes from the definition of the cost functional \eqref{T-CF}.

The LLG system has been the focus of numerous studies. In \cite{FAAS}, the authors proved the existence of weak solutions for the Landau-Lifshitz (LL) equation on bounded and unbounded domains, but without an external field, and showed that these solutions are generally not unique. While the studies  \cite{GCPF}, \cite{GCPF2}, and \cite{MFTT} addressed local and global existence of regular solutions, fewer works explore optimal control problems for the LLG equation. The paper \cite{TDMKAP} examined the solvability of the control problem for the 1D LLG equation, established the existence of an optimal solution, and derived a first-order necessary optimality condition. In our earlier works \cite{SPSK} and \cite{SPSK2}, we explored various approaches to the norm optimal control problem for the LLG equation. In \cite{SPSK}, we addressed the optimal control problem by taking an admissible control set that satisfies a smallness assumption, whereas in \cite{SPSK2}, we considered a broader class of controls and tackled the problem by applying the external magnetic field through a finite number of fixed coils. Because of such a control configuration, we can get the second-order optimality condition along a sharp critical cone. For more details on the solvability and computational aspects of micromagnetism, one may refer to \cite{AP}.

There are very few articles available on the LLB system. The existence of weak solutions and their regularity properties were studied in \cite{KNL}. In \cite{QLBG}, the authors established the existence and uniqueness of smooth solutions for the LLB system in $\mathbb{R}^2$ without a control parameter in the effective field. Furthermore, for $\mathbb{R}^3$, they showed that smooth solutions exist under a smallness assumption on the initial data. In \cite{SGUM}, the authors established the existence of weak martingale and strong solutions for the stochastic LLB equation in dimensions 1, 2, and 3, along with a weak relaxed optimal control for a general lower semicontinuous cost functional using Young measures. A deterministic version of the control problem was studied in our previous work \cite{SPSK3}, where we established the existence of strong solutions, formulated the control problem with a finite set of magnetic field coils, and derived necessary and sufficient optimality conditions, along with global optimality and uniqueness results.

The time-optimal control problem for semilinear parabolic PDEs has received minimal attention in the literature. In the pioneering works \cite{VB1,VB2}, the maximum principle for time-optimal control of 2D Navier-Stokes equations was established. These studies demonstrate that the velocity field of a viscous incompressible fluid in a bounded domain can be driven to a target state in minimal time using a constrained distributed force. The optimal control satisfies the maximum principle and exhibits a bang-bang structure. In \cite{EFC}, the author investigated both norm and time-optimal control problems for the variable density Navier-Stokes equations, establishing results on existence, uniqueness, regularity, and first-order optimality conditions. Similarly, \cite{CTA} addressed a time-optimal control problem for the Navier–Stokes–Voigt equations, demonstrating the existence of optimal solutions and deriving first-order necessary and second-order sufficient optimality conditions. Related work for the 3D Navier-Stokes-$\alpha$ system is presented in \cite{DTS}.   Moreover, in \cite{GW}, the authors investigated the existence of a time-optimal control for certain semilinear parabolic equations. In the paper \cite{KKWL}, they explored the time optimal control problems for an internally controlled heat equation with pointwise control constraints, identifying two formulations: (i) reaching a target set at a fixed time while delaying control activation, and (ii) achieving the target in the shortest time.  For a more detailed discussion on time-optimal control problems, one may refer to \cite{YLMB,HMNO}.

{\bf Main Contributions of the Paper.} To the best of the author's knowledge, this is the first work addressing the time-optimal control of the LLB system describing magnetization dynamics in ferromagnetic materials. By employing the existence and uniqueness of regular solutions for \eqref{NLP} (see \cite{SPSK3}), we proved the existence of solutions for the time-optimal control problem. We also derived a first-order necessary condition and a second-order sufficient optimality condition by using the approach developed in \cite{FT1,EFC,CTA}. The main difficulty of this paper is due to the non-linearities $m\times \Delta m$ and $|m|^2m$ arising in \eqref{NLP}, and the control itself arises non-linearly in the model. These nonlinearities further complicate the form of the first and second variational systems (see \eqref{T-LS2} and \eqref{ER-SY}), whose solvability is critical for the analysis of first- and second-order optimality conditions. The required technical estimates for these systems have been derived through a unified solvability result (Lemma \ref{L-SER}).  For proving the existence of a regular solution $m\in W^{1,2}(0, T; H^3(\Omega), H^1(\Omega)),$ we have only taken the control in $L^2(0, T; H^1(\Omega)). $ The control in the functional $\mathcal J(u)$ could be taken in $L^2(0, T; H^1(\Omega))$ and it is sufficient to prove the existence of a time-optimal control (Theorem \ref{T-EOOC}).  However, for first- and second-order conditions, we need the control  $u\in W^{1,2}(0, T; H^1(\Omega), H^1(\Omega)^\ast)$, which allows us to get the continuity in time $C([0, T];L^2(\Omega)),$ leading to the fact that  $m$ is continuously differentiable in time, and also it enables us to get the continuous differentiability of the linearized system \eqref{Z-KK}.

The manuscript is organized as follows. Section \ref{MR} introduces the function spaces, key inequalities, and the main results of the paper. The existence of optimal control is discussed in Section \ref{SEC-EOC}. Subsection \ref{SEC-LS} addresses the solvability of some general linear and nonlinear systems, using which the solvability of several systems throughout this paper is justified. The analysis of the first-order optimality conditions is presented in Subsection \ref{S-FOOC}. Finally, Section \ref{SEC-SO} focuses on the important result of second-order local optimality conditions.

\section{Main Results and Inequalities}\label{MR}

\subsection{Main Results}
In this subsection, we list the function spaces and inequalities used consistently in our paper and provide a brief overview of the main findings and key results of our study.

For any subset $\Omega \subset \mathbb{R}^n$, the Lebesgue space \( L^p(\Omega) \) is defined as the set of functions \( u: \Omega \to \mathbb{R} \) such that \( \|u\|_{L^p} < \infty \), where  
\[ 
\|u\|_{L^p(\Omega)} = \left( \int_{\Omega} |u(x)|^p \,dx \right)^{1/p}\ \ \text{for}\  1 \leq p < \infty, \ \ \ \ \   \text{and} \ \ \ \ \   \|u\|_{L^\infty(\Omega)} = \esssup_{\substack{x\, \in\, \Omega}} |u(x)|.
\]
Moreover, the Sobolev space \( W^{m,p}(\Omega) \) consists of functions \( u \in L^p(\Omega) \) whose weak derivatives up to order $m$ also belong to \( L^p(\Omega) \), with the associated norm 
\[
\| u \|_{W^{m,p}(\Omega)} = \left( \sum_{|\alpha| \leq m} \| D^\alpha u \|_{L^p(\Omega)}^p \right)^{1/p}\ \text{for}\   1 \leq p < \infty, \ \ \text{and }\ \ 
\| u \|_{W^{m,\infty}(\Omega)} = \max_{|\alpha| \leq m} \| D^\alpha u \|_{L^\infty(\Omega)}\ \ \text{for }\ p=\infty,
\]
where $D^{\alpha}$ is the weak derivative of multi-index order $\alpha$.
For $p=2$, we use the notation $H^m(\Omega):=W^{m,2}(\Omega)$, and the corresponding norm $\|\cdot\|_{H^m(\Omega)}=\|\cdot\|_{W^{m,2}(\Omega)}$, which defines a Hilbert space.

The time-dependent function space \( L^p(0,T;H^m(\Omega)) \) is defined as the set of functions \( u: (0,T) \times \Omega \to \mathbb{R} \) such that for almost every \( t \in (0,T) \), the function \( u(t, \cdot) \) belongs to \( H^m(\Omega) \), and the norm  
\[
\| u \|_{L^p(0,T;H^m(\Omega))} =
\begin{cases}
	\left( \int_0^T \| u(t) \|_{H^m(\Omega)}^p dt \right)^{1/p}, & \text{for } 1 \leq p < \infty, \\
	\operatorname{ess\,sup}_{t \in (0,T)} \| u(t) \|_{H^m(\Omega)}, & \text{for } p = \infty,
\end{cases}
\]
is finite. 

If we consider \( H^1(\Omega)^* \) to be the dual space of \( H^1(\Omega) \), then the duality pairing between them is represented as $\langle f, v \rangle_{H^1(\Omega)^*, H^1(\Omega)} \ \text{for any }  f \in H^1(\Omega)^*\ \text{and} \  v \in H^1(\Omega)$, and the dual norm is given by
\[
\| f \|_{H^1(\Omega)^*} = \sup_{\substack{v \in H^1(\Omega) \\ \| v \|_{H^1(\Omega)} = 1}} \big| \langle f, v \rangle_{H^1(\Omega)^*, H^1(\Omega)}\big|.
\]
Furthermore, the dual pairing between \( L^2(0,T;H^1(\Omega)) \) and \( L^2(0,T;H^1(\Omega)^*) \) is given by
\[
\langle f, u \rangle_{L^2(0,T;H^1(\Omega)^*), L^2(0,T;H^1(\Omega))} = \int_0^T \langle f(t), u(t) \rangle_{H^1(\Omega)^*, H^1(\Omega)} \, dt,
\]
and the dual norm on $L^2(0,T;H^1(\Omega)^*)$ is defined as  $\|f\|_{L^2(0,T;H^1(\Omega)^*)} = \left(\int_0^T \|f(t)\|_{H^1(\Omega)^*}^2\, dt\right)^{1/2}$.
%$$\int_0^T \|f(t)\|_{H^1(\Omega)^*}^2\, dt =\int_0^T \raisebox{2ex}{	$\displaystyle \sup_{\substack{v \in H^1(\Omega)\setminus\{0\}\\ \|v\|_{H^1(\Omega)}=1}}$} \, \frac{\Bigl|\langle f(t),v\rangle_{H^1(\Omega)^*\times H^1(\Omega)}\Bigr|^2}{\|v\|_{H^1(\Omega)}^2}\, dt = \int_0^T \|u_f(t)\|^2_{H^1(\Omega)} \ dt.$$

For any functional $f$ and $g$ in $H^1(\Omega)^*$, Riesz representation theorem guarantees the existence of two unique functions $u_f$ and $u_g$ in $H^1(\Omega)$ such that, for every $v\in H^1(\Omega)$,
$$\langle f,v\rangle_{H^1(\Omega)^*\times H^1(\Omega)} = \big(u_f,v\big)_{H^1(\Omega)}\ \ \text{and}\ \  \langle g,v\rangle_{H^1(\Omega)^*\times H^1(\Omega)} = \big(u_g,v\big)_{H^1(\Omega)}.$$ 
Based on this, we define the inner product in $L^2(0,T;H^1(\Omega)^*)$ space as follows
$$\langle \langle f,g\rangle\rangle_{L^2(0,T;H^1(\Omega)^*} = \int_0^T \langle f,g \rangle_{H^1(\Omega)^*\times H^1(\Omega)^*} \ dt  =\int_0^T (u_f,u_g)_{H^1(\Omega)}\ dt.$$

Since we require a certain continuity property of the control function with respect to time, we define the admissible control space as 
	$$\mathcal{U}:=W^{1,2}(0,T;H^1(\Omega),H^1(\Omega)^*)=\left\{u\in L^2(0,T;H^1(\Omega))\ | \ u_t\in L^2(0,T;H^1(\Omega)^*)\right\},$$
with norm
$$\|u\|_{\mathcal{U}}=\|u\|_{W^{1,2}(0,T;H^1(\Omega),H^1(\Omega)^*)}=\|u\|_{L^2(0,T;H^1(\Omega))}+\|u_t\|_{L^2(0,T;H^1(\Omega)^*)},$$
and inner product
$$((u,v))_{\mathcal{U}}=(u,v)_{L^2(0,T;H^1(\Omega))}+\langle \langle u_t,v_t\rangle \rangle_{L^2(0,T;H^1(\Omega)^*)}.$$
	%Suppose $m_{\Omega}: \Omega \times [0,T] \to \mathbb{R}^3$ be the desired evolutionary magnetic moment and $m_{\Omega} : \Omega \to \mathbb{R}^3$ be the final time target moment. Then we have defined the cost functional $\mathcal{J}:\mathcal{M}_S \times \mathcal{U}_{ad} \to \mathbb{R}^+$ as 
	
\noindent	We define the following sets of admissible solution spaces:
%	\begin{flalign*}
%		\mathcal{M}_S&:= W^{1,2}(0,T;H^2(\Omega),L^2(\Omega))\ =\{m \in L^2(0,T;H^2(\Omega)) \ |\ m_t \in L^2(0,T;L^2(\Omega))\},
%	\end{flalign*}
%	and
	\begin{flalign*}
		\mathcal{M}&:= W^{1,2}(0,T;H^3(\Omega),H^1(\Omega))\ =\left\{m \in L^2(0,T;H^3(\Omega)) \ |\ m_t \in L^2(0,T;H^1(\Omega))\right\}.
	\end{flalign*}
	A function $m\in \mathcal{M}$ satisfying system \eqref{NLP} almost everywhere corresponding to a control $u\in \mathcal{U}$ is called a \textbf{regular solution}.
	
%	where $\mathcal{M}_s$ denotes the space of strong solutions, and $\mathcal{M}$ denotes the space of regular solutions. 

%	\begin{Thm}(Existence of Strong Solution)\label{T-SS}\ 	Suppose the control $u\in L^2(0,T;L^2(\Omega))$ and the initial data $m_0\in H^1(\Omega)$. Then there exists a unique strong solution $m\in \mathcal{M}_S$ of system \eqref{NLP}. Moreover, there exists a constant $C(\Omega,T)>0$ such that the following estimate holds:
%		\begin{align}\label{SSEE}
%			&\|m\|^2_{L^\infty(0,T;H^1(\Omega))} + \|m\|^2_{L^2(0,T;H^2(\Omega))}  + \|m_t\|^2_{L^2(0,T;L^2(\Omega))}    \nonumber\\
%			&\  \ \ \     \leq \big(\|m_0\|^2_{H^1(\Omega)} + \|u\|^2_{L^2(0,T;L^2(\Omega))} \big) \  \exp\left\{C\ \left(T+\|u\|^2_{L^2(0,T;L^2(\Omega))}\right)\right\},\ \ \ \  \forall\ t\in [0,T].
%%	\end{Thm}
%\noindent The proof of this theorem is given in the previous work by the authors \cite{SPSK3}.

\begin{Thm}(Existence of Regular Solution for n\,=\,1,2,3)\label{T-RS} 
	Let $u\in L^2(0,T;H^1(\Omega))$ be the control and $m_0$ be the initial data satisfying condition \eqref{IC}. For $n=2$, the system \eqref{NLP} admits a unique global regular solution $m\in \mathcal{M}$. Moreover, there exist constants $M(m_0,u,\Omega,T,n)$ and $C(\Omega,T)>0$ such that the following estimate holds:
	\begin{equation}\label{SSEE2}
		\|m\|^2_{L^\infty(0,T;H^2(\Omega))} + \|m\|^2_{L^2(0,T;H^3(\Omega))} \leq 	M(m_0,u,\Omega,T),
	\end{equation}
	\begin{flalign*}
		\text{where}\ \ M(m_0,u,\Omega,T) &:= \left(\|m_0\|^2_{L^2(\Omega)}+\|\Delta m_0\|^2_{L^2(\Omega)}+\|u\|^2_{L^2(0,T;H^1(\Omega))}\right)&\nonumber\\
		&  \times  \exp\left\{C \left[1+\|m_0\|^2_{H^1(\Omega)} + \|u\|^2_{L^2(0,T;L^2(\Omega))} \right]^2 \ \exp\left\{C\left(\|u\|^2_{L^2(0,T;H^1(\Omega))}\right)\right\}\right\}.
	\end{flalign*}
	For $n=3$, the system \eqref{NLP} admits a unique ``local in time" regular solution for every control in $L^2(0,T;H^1(\Omega))$. However, there exists a constant $\widetilde{C}(\Omega)>0$ such that under the following smallness assumption:
	\begin{equation}\label{SAOIC}
		M(m_0,u,\Omega,T) < \frac{1}{\widetilde{C}^{\frac{1}{2}}},
	\end{equation}
	the solution exists globally and satisfies the energy estimate \eqref{SSEE2}.
\end{Thm}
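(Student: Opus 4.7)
The plan is to prove Theorem \ref{T-RS} by a Galerkin approximation scheme built on the eigenbasis $\{\phi_j\}_{j\ge 1}$ of $-\Delta$ with homogeneous Neumann boundary conditions on $\Omega$. At level $k$, I would project the initial data onto $V_k=\mathrm{span}\{\phi_1,\ldots,\phi_k\}$, obtain a finite-dimensional ODE system whose local solvability is classical, and extend the approximate solution $m^k$ globally once the a priori estimates below are in hand.

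The core of the argument is a hierarchy of energy identities. Testing the equation against $m^k$ uses $(m^k\times\Delta m^k,\,m^k)=0$ together with the sign of the cubic dissipation $-(1+|m^k|^2)m^k$ to produce an $L^\infty(0,T;L^2)\cap L^2(0,T;H^1)\cap L^4(0,T;L^4)$ bound. Next, testing against $-\Delta m^k$ again exploits the antisymmetry $(m^k\times\Delta m^k,\Delta m^k)=0$, yielding $L^\infty(0,T;H^1)\cap L^2(0,T;H^2)$ control after the cubic term is absorbed via Gagliardo--Nirenberg. The decisive step is the $H^2$ estimate obtained by testing against $\Delta^2 m^k$: here the cross-product does not vanish, and integration by parts produces remainders of the form $\int(|\nabla m|\,|\Delta m|+|m|\,|\nabla\Delta m|)\,|\nabla\Delta m|$. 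In $n=2$ the subcritical embedding $H^1\hookrightarrow L^p$ for every $p<\infty$, combined with Gagliardo--Nirenberg and Young's inequality, allows these remainders to be absorbed into $\|\nabla\Delta m^k\|_{L^2}^2$ together with Gr\"onwall-integrable terms, producing the global bound \eqref{SSEE2} with the characteristic nested exponential $M(m_0,u,\Omega,T)$. In $n=3$, the critical embedding $H^1\hookrightarrow L^6$ leaves no slack: the nonlinear remainders are controlled only by $\widetilde C\,\|m^k\|_{H^2}^2\,\|\nabla\Delta m^k\|_{L^2}^2$, and the diffusion absorbs them only under $\widetilde C\,M<1$, yielding \eqref{SAOIC}. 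The bound on $m_t$ in $L^2(0,T;H^1(\Omega))$, required for membership in $\mathcal M$, follows by rearranging the equation and using the already established bounds.

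With these uniform estimates, I would pass to the limit via weak-$\ast$ compactness in $L^\infty(0,T;H^2(\Omega))$, weak compactness in $L^2(0,T;H^3(\Omega))$, and strong compactness in $L^2(0,T;H^2(\Omega))$ obtained from the Aubin--Lions lemma with the $m_t$ bound; this suffices to identify all nonlinear limits $m^k\times\Delta m^k$, $|m^k|^2 m^k$, and $m^k\times u$. Uniqueness of regular solutions would then be obtained by writing the difference equation for $w=m_1-m_2$, testing against $w$ and $-\Delta w$, and applying Gr\"onwall after bounding the cross-product commutator $m_1\times\Delta m_1-m_2\times\Delta m_2$ using the $L^2(0,T;H^3)$ regularity of the two solutions.

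The main obstacle is the $H^2$-level estimate: the cross-product $m\times\Delta m$ behaves conservatively at the lower regularity levels but loses this structure when tested against $\Delta^2 m$, while the cubic relaxation $|m|^2 m$ grows pointwise. In the critical dimension $n=3$ these two effects cannot be separated from $\|\nabla\Delta m\|_{L^2}^2$ without a smallness ansatz, which is precisely what \eqref{SAOIC} encodes. Handling the control-coupling term $m\times u$ at this level, given only $u\in L^2(0,T;H^1(\Omega))$, also requires careful use of interpolation between $\|m\|_{L^\infty}$ and the intermediate Sobolev norms.
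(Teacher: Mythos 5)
Your overall strategy -- Faedo--Galerkin on the Neumann eigenbasis, the $L^2\to H^1\to H^2$ hierarchy of energy estimates exploiting $(m\times\Delta m)\cdot m=0$ and $(m\times\Delta m)\cdot\Delta m=0$, Aubin--Lions compactness to identify the nonlinear limits, and a difference-plus-Gr\"onwall uniqueness argument -- is exactly the route taken in the proof the paper relies on (Theorem 2.2 of \cite{SPSK3}, to which the paper delegates this theorem; the same scheme reappears in the Appendix here), including the origin of the nested exponential in $M$ from the two successive Gr\"onwall applications and the dichotomy ``2D global vs.\ 3D smallness'' at the $\Delta^2 m$ level.

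There is, however, one step as you wrote it that would fail. At the $H^2$ level you claim the integration by parts of $\int(m\times\Delta m)\cdot\Delta^2 m$ leaves remainders of the form $\int\bigl(|\nabla m|\,|\Delta m|+|m|\,|\nabla\Delta m|\bigr)|\nabla\Delta m|$, to be absorbed by Gagliardo--Nirenberg and Young. The second contribution cannot be absorbed: it is bounded only by $\|m\|_{L^\infty}\|\nabla\Delta m\|_{L^2}^2$, and $\|m\|_{L^\infty}$ is neither small nor splittable off the dissipation, so with this term present the $n=2$ global estimate (and the stated form of $M$) does not follow. The point is that the conservative structure of the cross product does not fully disappear at this level: writing $\nabla(m\times\Delta m)=\nabla m\times\Delta m+m\times\nabla\Delta m$, the pairing of $m\times\nabla\Delta m$ with $\nabla\Delta m$ vanishes pointwise, so the only genuine remainder is $\int(\nabla m\times\Delta m)\cdot\nabla\Delta m$. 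In 2D this is handled by Ladyzhenskaya interpolation and Young, giving a Gr\"onwall factor $\|\nabla m\|_{L^2}^2\|\Delta m\|_{L^2}^2\in L^1(0,T)$ (whence the nested exponential), while in 3D the critical interpolation leaves a superlinear term whose absorption is exactly what the smallness condition \eqref{SAOIC} (note: $\widetilde C\,M^2<1$, not $\widetilde C\,M<1$) encodes. With this cancellation made explicit, your argument matches the cited proof.
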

The proof of Theorem \ref{T-RS} can be found in Theorem 2.2, \cite{SPSK3}. The same proof holds for the case $n=1$. However, note that for $n=1$ we can get the optimality results just with the strong solutions, that is, solutions in $W^{1,2}(0,T;H^2(\Omega),L^2(\Omega))$ space with a control function in $L^2(0,T;L^2(\Omega))$. 

Furthermore, as a consequence of Theorem \ref{T-RS}, in order to deal with the solvability of \eqref{NLP} for $n=3$, we define a set
$$\mathcal{U}_R:= \Big\{u\in \mathcal{U} \text{ such that } u \text{ satisfies the smallness assumption \eqref{SAOIC} for a suitable } m_0 \Big\}.$$

%	As a consequence of Theorem \ref{T-RS}, a unique global regular solution $m\in \mathcal{M}$ exists for \eqref{NLP} when $n=1,2$ and $u\in W^{1,2}(0,T;H^1(\Omega);H^1(\Omega)^*)$. For $n=3$, this holds only under the smallness condition \eqref{SAOIC}, motivating the definition of a control set 
%	$$\mathcal{U}_R:= \Big\{u\in \mathcal{U} \ : \ 	M(m_0,u,\Omega,T) < \frac{1}{\widetilde{C}^{\frac{1}{2}}}\Big\}.$$
%Next, we will define our admissible control set as follows:
%	\begin{equation}\label{UAD}
%		\mathcal{U}_{ad}=\ \text{any non-empty, closed, convex subset of} 
%			\begin{cases}
%			 	\ \mathcal{U} \	\ \ \ \text{for } n=1,2, \vspace{0.1cm}\\
%				\ \mathcal{U}_R\ \ \ \text{for } n=3.
%		\end{cases}.
%	\end{equation}          

%$$\mathcal{U}_{ad}=\big\{ \text{any con-empty, closed, convex subset of}\ \mathcal{U}  
%		\ \big\} \ \ \ \ \text{for }n=1,2.$$
%For $n=3$ we have add another condition to the set $\mathcal{U}_{ad}$	that the controls satisfies the smallness assumption \eqref{SAOIC}, along with a suitable initial data.

%		\noindent Define the solution space for the weak adjoint problem as 
%	$$\mathcal{U} := W^{1,2}(0,T;H^1(\Omega),H^1(\Omega)^*):=\left\{\phi \in L^2(0,T;H^1(\Omega)) \mid \phi_t \in L^2(0,T;H^1(\Omega)^*)\right\}.$$
	% For the specific formulation of the weak solution to system \eqref{AS}, please refer to \eqref{WAF}.
	
	Let $\mathcal{U}_{ad}$ be a non-empty, closed, convex subset of $\mathcal{U}$ for $n=1,2$ and of $\mathcal{U}_R$ for $n=3$. 
For any control $u\in \mathcal{U}_{ad}$, an admissible pair $(T_u,u)$ is defined as a solution to system \eqref{NLP}, with the corresponding state $m_u\in \mathcal{M}$ and time $T_u=T^*(u)$ belongs to $(0,T]$. The set of all admissible pairs is denoted by $\mathcal{A} \subset (0,T] \times \mathcal{U}_{ad}$. Thus, the time optimal control problem is formulated as follows:
	\begin{equation*}\label{OCP}
		\text{(TOCP)}\begin{cases}
			\text{minimize}\ \mathcal J(T,u),\\
			(T,u) \in \mathcal{A}.
		\end{cases}	
	\end{equation*}
	
\begin{Ass}\label{ASS-T}
		Assume that there exists a control $u\in \mathcal{U}_{ad}$ such that $T^*(u)$ exists in $[0,T]$.
\end{Ass}
	\noindent This assumption is crucial, as it guarantees the existence of a control $u$ for which the cost functional $\mathcal{J}(T^*(u),u)<+\infty$. Now, we state the existence of a time-optimal control for TOCP.
	%
	%Without this assumption, addressing the time-optimal control problem becomes a challenging task, and this paper has not addressed such cases.
		
	\begin{Thm}(Existence of Optimal Control)\label{T-EOOC}
	Suppose for $n=1,2,3$ the initial data $m_0$ satisfies the condition \eqref{IC}, and for $n=3$, it also satisfies the smallness condition \eqref{SAOIC}. Under these conditions and  Assumption \ref{ASS-T}, the time optimal control problem (TOCP) admits a solution $\widetilde{u} \in \mathcal{U}_{ad}$, with the corresponding optimal time $\widetilde{T}$. 
	\end{Thm}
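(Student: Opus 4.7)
My plan is to apply the direct method of the calculus of variations. By Assumption \ref{ASS-T} the admissible set $\mathcal{A}$ is non-empty and $\mathcal{J}$ is bounded below by $0$, so $J^* := \inf_{(T,u)\in\mathcal{A}} \mathcal{J}(T,u)$ is finite. Pick a minimizing sequence $(T_n,u_n)\subset\mathcal{A}$ with $\mathcal{J}(T_n,u_n)\to J^*$. The boundedness of $\mathcal{J}$ along the sequence yields $T_n\in[0,T]$ and $\|u_n\|_{\mathcal{U}}\le C$. Since $\mathcal{U}=W^{1,2}(0,T;H^1(\Omega),H^1(\Omega)^*)$ is reflexive and $\mathcal{U}_{ad}$ is closed and convex (hence weakly closed) I can extract subsequences (not relabeled) with $T_n\to\widetilde T\in[0,T]$ and $u_n\rightharpoonup\widetilde u$ weakly in $\mathcal{U}$, with $\widetilde u\in\mathcal{U}_{ad}$. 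For $n=3$, the smallness bound \eqref{SAOIC} is preserved under the weak limit because it depends on $\|u\|_{L^2(0,T;H^1(\Omega))}$, which is weakly lower semicontinuous, so $\widetilde u\in\mathcal{U}_R$ as well.

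Next, let $m_n\in\mathcal{M}$ be the regular solution of \eqref{NLP} associated with $u_n$. By Theorem \ref{T-RS} the uniform control on $\|u_n\|_{L^2(0,T;H^1(\Omega))}$, together with the smallness condition in three dimensions, gives a uniform bound on $\|m_n\|_{L^\infty(0,T;H^2(\Omega))} + \|m_n\|_{L^2(0,T;H^3(\Omega))}$; reading the PDE yields a uniform bound on $\|(m_n)_t\|_{L^2(0,T;H^1(\Omega))}$. Extracting once more, I obtain $m_n\rightharpoonup\widetilde m$ weakly in $L^2(0,T;H^3(\Omega))$, weakly-$*$ in $L^\infty(0,T;H^2(\Omega))$, with $(m_n)_t\rightharpoonup \widetilde m_t$ weakly in $L^2(0,T;H^1(\Omega))$. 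The Aubin--Lions lemma applied to the triple $H^3(\Omega)\hookrightarrow\hookrightarrow H^2(\Omega)\hookrightarrow H^1(\Omega)$ gives strong convergence $m_n\to\widetilde m$ in $L^2(0,T;H^2(\Omega))$, and similarly $m_n\to\widetilde m$ in $C([0,T];H^1(\Omega))$ after possibly a further refinement. This strong convergence is enough to pass to the limit in the nonlinear terms $m_n\times\Delta m_n$, $|m_n|^2 m_n$, and $m_n\times u_n$ (the last one using weak--strong pairing in $L^2(\Omega_T)$), so $\widetilde m$ is the regular solution of \eqref{NLP} with control $\widetilde u$.

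The crucial step is handling the minimal time. At each $n$, $\|m_n(T_n)-m_\Omega\|_{L^2(\Omega)}\le\delta$. The embedding $\mathcal{M}\hookrightarrow C([0,T];H^{2-\epsilon}(\Omega))$ is compact, so $m_n\to\widetilde m$ uniformly in $C([0,T];L^2(\Omega))$. Splitting
\[
\|m_n(T_n)-\widetilde m(\widetilde T)\|_{L^2(\Omega)} \le \|m_n(T_n)-\widetilde m(T_n)\|_{L^2(\Omega)} + \|\widetilde m(T_n)-\widetilde m(\widetilde T)\|_{L^2(\Omega)},
\]
the first term vanishes by uniform convergence and the second by the continuity in time of $\widetilde m$. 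Hence $\|\widetilde m(\widetilde T)-m_\Omega\|_{L^2(\Omega)}\le\delta$, so by definition $T^*(\widetilde u)\le\widetilde T$, and in particular $T^*(\widetilde u)$ exists in $[0,T]$.

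Finally, combining $T^*(\widetilde u)\le\widetilde T=\lim T_n$ with the weak lower semicontinuity of $\|\cdot\|_{\mathcal{U}}^2$ gives
\[
\mathcal J(\widetilde u) = \tfrac12 T^*(\widetilde u)^2 + \tfrac12\|\widetilde u\|_{\mathcal{U}}^2 \le \liminf_{n\to\infty}\bigl(\tfrac12 T_n^2+\tfrac12\|u_n\|_{\mathcal{U}}^2\bigr) = J^*,
\]
so $\widetilde u$ is a time-optimal control with optimal time $T^*(\widetilde u)\le\widetilde T$. The main obstacle I anticipate is the lower-semicontinuity-type argument for the minimal time $T^*$: it requires enough regularity of the state map to ensure $m_n(T_n)\to\widetilde m(\widetilde T)$ in $L^2(\Omega)$ along the weakly convergent controls, and this is exactly why the admissible control space is taken to be $\mathcal{U}$ (giving the temporal continuity $C([0,T];L^2(\Omega))$ of states) rather than only $L^2(0,T;H^1(\Omega))$; a secondary but technical point is to verify that the smallness condition \eqref{SAOIC} used in the three-dimensional case is stable under weak convergence of controls, which follows because the exponentials in $M(m_0,u,\Omega,T)$ are monotone in $\|u\|_{L^2(0,T;H^1(\Omega))}$.
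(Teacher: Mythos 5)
Your proposal is correct and follows essentially the same route as the paper: a minimizing sequence, weak compactness of the controls in $\mathcal{U}$ together with weak closedness of $\mathcal{U}_{ad}$, the uniform state estimate \eqref{SSEE2} plus Aubin--Lions--Simon compactness to pass to the limit in the nonlinear terms, the same triangle-inequality argument at the times $T_n$ to obtain $\|\widetilde m(\widetilde T)-m_\Omega\|_{L^2(\Omega)}\le\delta$, and weak lower semicontinuity of the cost to conclude. The only cosmetic difference is that you argue $T^*(\widetilde u)\le\widetilde T$ and close via admissibility of $\widetilde u$ (and add a harmless check of \eqref{SAOIC} under weak limits, which the paper handles by taking $\mathcal{U}_{ad}$ inside $\mathcal{U}_R$), whereas the paper asserts $T^*(\widetilde u)=\lim_k T^*(u_k)$ directly; both yield the same conclusion.
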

\noindent The proof of this theorem is given in Section \ref{SEC-EOC}.

Next, we proceed to state the first-order and second-order optimality conditions using the adjoint problem approach. Our proofs rely on the following two key assumptions:
\begin{equation}\label{CON-1}
	0<\widetilde{T}<T,
\end{equation}
	and
	\begin{equation}\label{CON-2}
\big(\widetilde{m}(\widetilde{T})-m_{\Omega},\widetilde{m}_t(\widetilde{T})\big)_{L^2(\Omega)}<0.		
	\end{equation}

\begin{Rem}\label{MT-WPN}
	Since $\widetilde{u} \in W^{1,2}(0,T;H^1(\Omega);H^1(\Omega)^*)$ and $\widetilde{m} \in W^{1,2}(0,T;H^3(\Omega);H^1(\Omega))$, Theorem 2 in Section 5.9 of \cite{LCE} implies that $\widetilde{u} \in C([0,T];L^2(\Omega))$ and $\widetilde{m} \in C([0,T];H^2(\Omega))$, respectively. Consequently, when we substitute this continuity property in equation \eqref{NLP}, it follows that the time derivative $\widetilde{m}_t\in C([0,T];L^2(\Omega))$. Hence, the value $\widetilde{m}_t(\widetilde{T})$ is well-defined.
\end{Rem}
\noindent The assumptions \eqref{CON-1} and \eqref{CON-2} can be explored as follows.  From an application standpoint, we require the trajectory $\widetilde m$ to \emph{enter} the $\delta$-tube 
\[\mathcal T_\delta := \bigl\{\, m \in L^2(\Omega;\mathbb R^3) : \|m - m_\Omega\|_{L^2(\Omega)} \le \delta \,\bigr\}\]
around the desired state $m_\Omega$ within the horizon $(0,T)$. Consistent with the objective functional \eqref{T-CF}, we consider controls of moderate size whose minimal time $T^*(u)$ lies in $[0,T]$. To rule out mere boundary touching, we impose a strict inward crossing at the minimal time $\widetilde{T}$, that is, if we denote $\phi(t):=\tfrac12\|\,\widetilde m(t)-m_\Omega\,\|_{L^2(\Omega)}^2$, then assumption (2.4) follows from \[\phi'(\widetilde T)=\bigl(\widetilde m(\widetilde T)-m_\Omega,\ \widetilde{m}_t(\widetilde T)\bigr)_{L^2(\Omega)}<0,\]
which ensures entry into the interior of $\mathcal T_\delta$ rather than a grazing contact.

To exclude the trivial case at $t=0$, we assume $\|\widetilde{m}(0)-m_\Omega\|_{L^2(\Omega)}>\delta$, that is, $T^*(\widetilde u)=\widetilde T>0$. Finally, to preclude terminal grazing (first contact occurring only at $t=T$), we assume $\widetilde T<T$, which guarantees interior reachability strictly before the final time.

\begin{Pro}\label{L-EQ}
	Suppose $u$ be any control in $W^{1,2}(0,T;H^1(\Omega),H^1(\Omega)^*)$ and $m\in \mathcal{M}$ be its associated state. If $T^*(u)=\inf\, \big\{t\in [0,T]: \| m(t)-m_{\Omega}\|_{L^2(\Omega)}\leq \delta \big\}$ exists in $[0,T]$, then we have $\|m(T^*(u))-m_{\Omega}\|_{L^2(\Omega)}=\delta$.
\end{Pro}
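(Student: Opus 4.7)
The plan is to reduce the statement to a one-variable continuity argument for the scalar function
\[
\phi(t) := \|m(t) - m_\Omega\|_{L^2(\Omega)}, \qquad t \in [0,T].
\]
First I would invoke the regularity of the state: since $u \in \mathcal{U}$ and $m \in \mathcal{M}$, Remark \ref{MT-WPN} (via the standard embedding $W^{1,2}(0,T;H^3(\Omega),H^1(\Omega)) \hookrightarrow C([0,T];H^2(\Omega))$) gives $m \in C([0,T];L^2(\Omega))$, whence $\phi$ is continuous on $[0,T]$. This is the only analytic input needed; everything else is an elementary argument about the infimum of a continuous function.

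Next I would prove the two inequalities $\phi(T^*(u)) \le \delta$ and $\phi(T^*(u)) \ge \delta$. For the first, observe that the sublevel set
\[
S := \bigl\{\, t \in [0,T] : \phi(t) \le \delta \,\bigr\} = \phi^{-1}\bigl((-\infty,\delta]\bigr)
\]
is closed in $[0,T]$ by continuity of $\phi$, and nonempty by the hypothesis that $T^*(u)$ exists. Hence the infimum $T^*(u) = \inf S$ belongs to $S$, yielding $\phi(T^*(u)) \le \delta$. For the reverse inequality, use the definition of infimum: for every $t \in [0,T^*(u))$ we have $t \notin S$, so $\phi(t) > \delta$; passing to the limit $t \nearrow T^*(u)$ and using continuity of $\phi$ gives $\phi(T^*(u)) \ge \delta$. (The limit is a genuine left-limit because $T^*(u) > 0$, which itself follows from the assumption $\|m_0 - m_\Omega\|_{L^2(\Omega)} > \delta$ made just after the definition \eqref{T-CF} together with the continuity of $\phi$ at $0$.) Combining the two inequalities yields $\phi(T^*(u)) = \delta$.

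There is no real obstacle in the argument; the only step that requires care is ensuring $\phi$ is pointwise well-defined and continuous so that one can evaluate $\phi$ at the single point $T^*(u)$, which is exactly the content of the embedding recalled in Remark \ref{MT-WPN}. Once continuity of $\phi$ is in hand, the conclusion is just the standard fact that the infimum of a continuous function attains any boundary value at which its sublevel set changes from empty to nonempty.
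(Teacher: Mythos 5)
Your argument is correct and is essentially the standard continuity-plus-minimality proof that the paper itself invokes by citing Lemma 4.1 of \cite{CTA}: continuity of $t\mapsto\|m(t)-m_\Omega\|_{L^2(\Omega)}$ (from $\mathcal{M}\hookrightarrow C([0,T];H^2(\Omega))$, cf.\ Remark \ref{MT-WPN}) makes the sublevel set closed so the infimum is attained, while minimality forces the value to be at least $\delta$. Your added care in ruling out $T^*(u)=0$ via the standing assumption $\|m_0-m_\Omega\|_{L^2(\Omega)}>\delta$ is exactly the right detail and matches the paper's setup.
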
	
\noindent The proof is straightforward and follows directly from standard arguments (see Lemma 4.1 of \cite{CTA}).

\noindent As a consequence of Proposition \ref{L-EQ}, we can reformulate our time-optimal control problem as follows:
\begin{equation}\label{N-TOCP}
	\begin{cases}
		\text{minimize}	\ \ \mathcal{J}=\frac{1}{2} \ T^{\#^2} +\frac{1}{2} \  \|u\|^2_{W^{1,2}(0,T;H^1(\Omega);H^1(\Omega)^*)},\\
		\text{subject to} \ \ \ T^\# \in [0,T],\\
		\hspace{2cm} (m,u) \ \text{satisfies system}\ \eqref{NLP},\\
		\hspace{2cm} \|m(T^\#)-m_{\Omega}\|_{L^2(\Omega)}=\delta.
	\end{cases}
\end{equation}
Next, we introduce the Lagrange multipliers $\lambda$ and $\phi(x,t)$, and define the Lagrangian functional $\mathscr{L}$ for the time optimal control problem \eqref{N-TOCP} as follows:
\begin{align*}
	\mathscr{L}(\tau,u,m,\lambda,\phi)&= \mathcal{J}(u) - \int_{\Omega_T} 	\big( m_t- \Delta m- m \times \Delta m  - m \times u+ \big(1+ |m|^2\big)\,m - u\big)\, \phi\ dx\ dt\\
	%- \int_{\Omega} \big(m(0)-m_0\big)\ \psi_2\ dx
	&\hspace{1cm}  - \lambda\left(\frac{1}{2}\|m(\tau)-m_{\Omega}\|^2_{L^2(\Omega)}-\frac{1}{2}\delta^2\right).
\end{align*}
If we consider $\widetilde{T},\widetilde{u}$, and $\widetilde{m}$ as the optimal time, control and state, respectively, then the Gateaux derivative of $\mathscr{L}$ with respect to $\tau,m$ satisfies
$$D_{(t,m)}\mathscr{L}(\widetilde{T},\widetilde{u},\widetilde{m},\lambda,\phi)[\mathcal{S},y-\widetilde{m}] \geq 0\ \ \ \ \ \ \text{for all sufficiently smooth y with }\ y(0)=m_0.$$
Now, for any $y$ such that $y(0)=0$, substituting $y+\widetilde{m}$ in place of $y$ in the above inequality, and noting that the inequality holds for both $y$ and $-y$, we obtain  
\begin{align}\label{ASD-1}
	&D_{(t,m)}\mathscr{L}(\widetilde{T},\widetilde{u},\widetilde{m},\lambda,\phi)[\mathcal{S},y]= \widetilde{T}\mathcal{S}- \lambda\ \Big(\widetilde{m}(\widetilde{T})-m_{\Omega},\widetilde{m}_t(\widetilde{T})\,\mathcal{S}\Big)_{L^2(\Omega)} \nonumber\\
	&\hspace{1cm} -\int_{\Omega_{\widetilde{T}}} \phi\ \left( y_t- \Delta y- \widetilde{m} \times \Delta y  -y \times \Delta \widetilde{m} - y \times \widetilde{u} + \big(1+ |\widetilde{m}|^2\big)\,y +2\big(\widetilde{m} \cdot y\big)\, \widetilde{m} \right)\ dx\ dt\nonumber\\
	%-\int_{\Omega} y(0)\cdot \psi\ dx\ 
	&\hspace{1cm} -\lambda\  \Big(\widetilde{m}(\widetilde{T})-m_{\Omega},y(\widetilde{T})\Big)_{L^2(\Omega)} =0\ \ \ \ \ \ \ \ \ \text{for all y with }\ y(0)=0.   
\end{align}
%\begin{align*}
%	&D\mathscr{L}(\widetilde{T},\widetilde{u},\widetilde{m},\lambda,\psi_1,\psi_2)[\mathcal{S},h,y]= \left(\widetilde{T}\mathcal{S} + \int_{\Omega_T} \widetilde{u}\cdot h\ dx\ dt\right)\\
%	&\hspace{1cm} -\int_{\Omega_T} \psi_1\ \left( y_t- \Delta y- \widetilde{m} \times \Delta y  -y \times \Delta \widetilde{m} -\widetilde{m} \times h- y \times \widetilde{u} + \big(1+ |\widetilde{m}|^2\big)\,y +2\big(\widetilde{m} \cdot y\big)\, \widetilde{m}-h \right)\ dx\ dt\\
%	&\hspace{1cm} -\int_{\Omega} y(0)\cdot \psi_2\ dx\ - \lambda\ \left(\Big(\widetilde{m}(\widetilde{T})-m_{\Omega},\widetilde{m}_t(\widetilde{T})\,\mathcal{S}\Big)_{L^2(\Omega)} + \Big(\widetilde{m}(\widetilde{T})-m_{\Omega},y(\widetilde{T})\Big)_{L^2(\Omega)}\right)=0.   
%\end{align*}
	Now, by taking the multiplier $\phi$ to be 0, we can find that
	\begin{equation}\label{ASD-2}
	\lambda=\frac{\widetilde{T}}{\Big(\widetilde{m}(\widetilde{T})-m_{\Omega},\widetilde{m}_t(\widetilde{T})\Big)_{L^2(\Omega)}}.		
	\end{equation}
Now, doing space and time integration by parts of the respective terms of \eqref{ASD-1}, and using the cross product property $a\cdot(b\times c) = -b\cdot(a \times c)$, we derive
\begin{align*}
	&\int_{\Omega_{\widetilde{T}}} y\ \Big({\phi}_t+\Delta \phi + \Delta (\phi \times \widetilde{m})-\phi \times \Delta \widetilde{m} -\phi \times \widetilde{u} - \left(1+|\widetilde{m}|^2\right)\phi -2\, \widetilde{m}\, \big(\widetilde{m}\cdot\phi \big) \Big) \ dx\ dt\nonumber\\
	%+ \big(\phi(0),y(0)\big)
	&\hspace{1cm} - \big(\phi(\widetilde{T}),y(\widetilde{T})\big) -\int_{\partial \Omega_{\widetilde{T}}} y \left(\frac{\partial \phi}{\partial \eta} +\frac{\partial \phi}{\partial \eta}\big(\phi \times  \widetilde{m}\big)\right)\ ds(x)\ dt - \lambda\ \Big(\widetilde{m}(\widetilde{T})-m_{\Omega},y(\widetilde{T})\Big)_{L^2(\Omega)}=0.
	%- \int_{\Omega} y(0)\cdot \psi \ dx
\end{align*}

Finally, following the formal Lagrange method similar to Section 2.10 of \cite{FT}, and substituting the value of $\lambda$ from equation \eqref{ASD-2}, we can formulate the following adjoint system:
	\begin{equation}\label{AS}
		\begin{cases}
			\phi_t + \Delta \phi  + \Delta (\phi \times \widetilde{m})+(\Delta  \widetilde{m}\times \phi)-(\phi \times \widetilde{u}) -\left(1+|\widetilde{m}|^2\right)\phi -2\ \big(\widetilde{m}\cdot \phi\big) \ \widetilde{m} =0 \ \ \ \text{in}\ \Omega\times [0,\widetilde{T}],\vspace{0.2cm}\\
			\frac{\partial \phi}{\partial \eta}=0 \ \ \ \text{in}\ \partial \Omega\times [0,\widetilde{T}],\vspace{0.1cm}\\
			\phi(\widetilde{T})=-\frac{\widetilde{T}\ \big(\widetilde{m}(\widetilde{T})-m_{\Omega}\big)}{\big(\widetilde{m}(\widetilde{T})-m_{\Omega},\widetilde{m}_t(\widetilde{T})\big)_{L^2(\Omega)}}\ \ \ \text{in} \ \Omega.
		\end{cases}
	\end{equation}

	%Weak formulation of system \eqref{AS} is given by \eqref{WAF} with $g=\nabla \cdot \left(|\nabla \widetilde{m}-\nabla m_{\Omega}|^2 (\nabla \widetilde{m}-\nabla m_{\Omega})\right)$.
	%	Define the weak adjoint solution space as $\mathcal{U}:=W^{1,2}(0,T;H^1(\Omega),H^1(\Omega)^*)=\{z \in L^2(0,T;H^3(\Omega))\ |$ $\ z_t \in L^2(0,T;H^1(\Omega))\}$. For the definition of weak solution of system \eqref{AS} refer to \eqref{WAF}.
	
%	For a detailed derivation of the adjoint system, we refer the reader to \cite{EFC}.
\noindent  Next, we define the weak formulation of the adjoint problem as follows:
	% This involves deriving the adjoint equations in a suitable function space and ensuring that they satisfy the necessary conditions for weak solutions.

	%Consequently, the subsequent step involves defining the weak formulation of the adjoint problem.
	
	%	For the specific formulation of the weak solution to system \eqref{AS}, please refer to \eqref{WAF}.
	
	\begin{Def}[Weak formulation]\label{AWSD}
		Suppose $\widetilde{m}\in \mathcal{M}$, $\widetilde{u}\in \mathcal{U}$, $m_{\Omega}\in L^2(\Omega)$ and $\widetilde{m}_t(\widetilde{T})\in L^2(\Omega)$. A function $\phi \in \mathcal{U}$ is said to be a weak solution of system \eqref{AS} if for every $\vartheta \in L^2(0,\widetilde{T};H^1(\Omega))$, the following holds:
%	\setlist[enumerate,1]{left=-9pt}	
%		\begin{enumerate}[(i)]
%			\item $\begin{aligned}[t]
%				&\int_0^{\widetilde{T}} \langle \phi'(t),\vartheta (t)\rangle_{H^1(\Omega)^*\times H^1(\Omega)}dt - \int_{\Omega_{\widetilde{T}}} \nabla \phi \cdot \nabla  \vartheta\ dx \ dt- \int_{\Omega_{\widetilde{T}}}\nabla (\phi \times \widetilde{m})\cdot \nabla  \vartheta\ dx \ dt-\int_{\Omega_{\widetilde{T}}} \big(\phi \times \widetilde{u}\big)\cdot \vartheta\ dx\ dt\nonumber\\
%				&\hspace{1cm}+ \int_{\Omega_{\widetilde{T}}}(\Delta \widetilde{m}\times \phi)\cdot \vartheta\ dx\ dt- \int_{\Omega_{\widetilde{T}}} \left( 1 + |\widetilde{m}|^2 \right) \phi\cdot \vartheta\  dx\ dt -2\int_{\Omega_{\widetilde{T}}} \big((\widetilde{m} \cdot \phi)\ \widetilde{m}\big)\cdot \vartheta \ dx\ dt =0,  
%			\end{aligned}$\vspace{0.1cm}
%			\item $\phi({\widetilde{T}})=-\frac{\widetilde{T}\ \big(\widetilde{m}(\widetilde{T})-m_{\Omega}\big)}{\big(\widetilde{m}(\widetilde{T})-m_{\Omega},\widetilde{m}_t(\widetilde{T})\big)_{L^2(\Omega)}}$ in the trace sense.
%		\end{enumerate}		
		\begin{align}\label{WEFO}
			(i)&\int_0^{\widetilde{T}} \langle \phi'(t),\vartheta (t)\rangle_{H^1(\Omega)^*\times H^1(\Omega)}dt - \int_{\Omega_{\widetilde{T}}} \nabla \phi \cdot \nabla  \vartheta\ dx \ dt- \int_{\Omega_{\widetilde{T}}}\nabla (\phi \times \widetilde{m})\cdot \nabla  \vartheta\ dx \ dt-\int_{\Omega_{\widetilde{T}}} \big(\phi \times \widetilde{u}\big)\cdot \vartheta\ dx\ dt\nonumber\\
			&\hspace{1cm}+ \int_{\Omega_{\widetilde{T}}}(\Delta \widetilde{m}\times \phi)\cdot \vartheta\ dx\ dt- \int_{\Omega_{\widetilde{T}}} \left( 1 + |\widetilde{m}|^2 \right) \phi\cdot \vartheta\  dx\ dt -2\int_{\Omega_{\widetilde{T}}} \big((\widetilde{m} \cdot \phi)\ \widetilde{m}\big)\cdot \vartheta \ dx\ dt =0,  \vspace{0.1cm}\\
			(ii)& \ \ \phi({\widetilde{T}})=-\frac{\widetilde{T}\ \big(\widetilde{m}(\widetilde{T})-m_{\Omega}\big)}{\big(\widetilde{m}(\widetilde{T})-m_{\Omega},\widetilde{m}_t(\widetilde{T})\big)_{L^2(\Omega)}}\ \ \ \text{ in the trace sense.}\nonumber
    	\end{align}
	\end{Def}
	
	%\begin{Def}[Weak formulation of the adjoint system]\label{WSD}
	%	A function $\phi \in \mathcal{U}$ is said to be a weak solution of the adjoint system \eqref{AS} if for every $v\in L^2(0,T;H^1(\Omega))$, the following holds:
	%	\begin{enumerate}[(i)]
		%		\item $\begin{aligned}[t]
			%			&\int_0^T \langle \phi'(t),\vartheta (t)\rangle_{H^1(\Omega)^*\times H^1(\Omega)}dt - \int_{\Omega_T} \nabla \phi \cdot \nabla  \vartheta\ dx \ dt+ \int_{\Omega_T} |\nabla \widetilde{m}|^2\phi \cdot \vartheta \ dx\ dt\nonumber\\
			%			&\hspace{1cm}+2 \int_{\Omega_T} (\widetilde{m}\cdot \phi)\nabla \widetilde{m}\cdot \nabla \vartheta\ dx\ dt- \int_{\Omega_T}\nabla (\phi \times \widetilde{m})\cdot \nabla  \vartheta\ dx \ dt+ \int_{\Omega_T}(\Delta \widetilde{m}\times \phi)\cdot \vartheta\ dx\ dt\nonumber\hspace{2cm}\\
			%			&\hspace{1cm}-\int_{\Omega_T} (\phi \times \widetilde{u})\cdot \vartheta\ dx\ dt			+ \int_{\Omega_T} \big((\phi \times \widetilde{m})\times \widetilde{u}\big)\cdot \vartheta\ dx\ dt+ \int_{\Omega_T} \big(\phi \times (\widetilde{m}\times \widetilde{u})\big)\cdot \vartheta\ dx\ dt\nonumber\\
			%			&\hspace{1cm}= -\int_{\Omega_T} |\nabla \widetilde{m}-\nabla m_{\Omega}|^2 (\nabla \widetilde{m}-\nabla m_{\Omega}) \cdot \nabla  \vartheta\ dx\ dt,  
			%		\end{aligned}$\vspace{0.1cm}
		%		\item $\phi(T)=\widetilde{m}(x,T)-m_{\Omega}(x).$
		%	\end{enumerate}
	%\end{Def}

	\begin{Thm}\label{T-AWS}
		If $(\widetilde{T},\widetilde{u})$ constitutes an admissible pair and $\widetilde{m}$ is the corresponding regular solution, then there exists a unique weak solution $\phi \in \mathcal{U}$ for the adjoint system \eqref{AS}, in the sense of Definition \ref{AWSD}. Moreover, the following inequality holds:
		%		\begin{align}
			%			&\int_0^T \langle \phi'(t),v(t)\rangle_{H^1(\Omega)^*\times H^1(\Omega)}dt = -\int_0^T(\phi(t),v'(t)) \ dt\nonumber\\
			%			&\hspace{1in}+\big(\phi(T),v(T)\big)-\big(\phi(0),v(0)\big).\label{INBY}
			%		\end{align}
		\begin{flalign*}%\label{AEEE}
			\|\phi\|^2_{L^{\infty}(0,T;L^2(\Omega))}&+ \|\phi\|^2_{L^2(0,T;H^1(\Omega))} + \|\phi_t\|^2_{L^2(0,T;H^1(\Omega)^*)}\nonumber\\
			& \leq \|\phi(\widetilde{T}) \|^2_{L^2(\Omega)} \exp\bigg\{C  \left(1+  \|\widetilde{m}\|^4_{L^\infty(0,T;H^1(\Omega))} + \|\widetilde{m}\|^2_{L^2(0,T;H^2(\Omega))} +\|\widetilde{u}\|^2_{L^2(0,T;L^2(\Omega))} \right) \bigg\}.	
		\end{flalign*}
	\end{Thm}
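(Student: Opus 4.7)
The plan is to first apply a time reversal $\tau := \widetilde T - t$ to convert the backward problem \eqref{AS} into a forward linear Cauchy problem for $\psi(\tau) := \phi(\widetilde T-\tau)$ with initial datum $\psi(0) = \phi(\widetilde T)$ and coefficient functions $\overline m(\tau) = \widetilde m(\widetilde T-\tau)$, $\overline u(\tau) = \widetilde u(\widetilde T - \tau)$, which retain the regularity $\overline m \in L^\infty(0,\widetilde T;H^2) \cap L^2(0,\widetilde T;H^3)$ and $\overline u \in L^2(0,\widetilde T;H^1)$ inherited from Theorem \ref{T-RS} and $\widetilde u\in\mathcal U$. The resulting equation is linear in $\psi$ with time-dependent coefficients, so existence of a weak solution in $\mathcal{U}$ follows either by applying the unified solvability result of Lemma \ref{L-SER} directly or by constructing a Galerkin approximation $\psi^N$ in the finite-dimensional span of the first $N$ Neumann-Laplacian eigenfunctions, for which the corresponding ODE system is easily solved; note that the terminal datum is a well-defined element of $L^2(\Omega)$ thanks to Remark \ref{MT-WPN} and the nondegeneracy \eqref{CON-2}.

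The core of the argument is the a priori estimate, obtained by testing the weak form with $\phi$ itself. Three structural observations drive the calculation: (i) the identity $(a\times b)\cdot b = 0$ immediately kills the integrals $\int(\Delta \widetilde m \times \phi)\cdot\phi\,dx$ and $\int(\phi\times\widetilde u)\cdot\phi\,dx$; (ii) integration by parts on $\int\nabla(\phi\times\widetilde m)\cdot\nabla\phi\,dx$ followed by the Leibniz expansion splits the term into $\int(\nabla\phi\times\widetilde m)\cdot\nabla\phi\,dx + \int(\phi\times\nabla\widetilde m)\cdot\nabla\phi\,dx$, the first of which vanishes by the same cross-product identity and the second of which is controlled via H\"older and the embedding $H^2\hookrightarrow L^\infty$ (valid for $n\le 3$) by $\|\phi\|_{L^2}\|\nabla\widetilde m\|_{L^\infty}\|\nabla\phi\|_{L^2}$; (iii) the zeroth-order terms $-(1+|\widetilde m|^2)\phi$ and $-2(\widetilde m\cdot\phi)\widetilde m$ contribute a harmless factor of $(1+\|\widetilde m\|^2_{L^\infty})\|\phi\|^2_{L^2}$, again dominated by $\|\widetilde m\|^2_{H^2}$. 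Absorbing a fraction of $\|\nabla\phi\|^2_{L^2}$ into the left-hand side via Young's inequality produces a differential inequality of Gronwall type in the reversed time variable with driver $g(\tau):=C(1+\|\overline m\|^2_{H^2})\in L^1(0,\widetilde T)$, and integration yields the $L^\infty(L^2)\cap L^2(H^1)$ bounds with precisely the exponential factor stated in the theorem.

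The dual norm estimate on $\phi_t$ follows by pairing the equation with an arbitrary $\vartheta\in L^2(0,\widetilde T;H^1(\Omega))$ of unit norm and bounding every nonlinear term in terms of the already-controlled $L^\infty(L^2)\cap L^2(H^1)$ norm of $\phi$ together with the regularity of $\widetilde m,\widetilde u$; the mixed term $\int\nabla(\phi\times\widetilde m)\cdot\nabla\vartheta\,dx$ is handled by the same Leibniz split and the embedding $H^2\hookrightarrow W^{1,4}$. Uniqueness is immediate from linearity: two weak solutions have a difference satisfying the same system with zero terminal data, and the energy estimate then forces it to vanish identically. I expect the main technical obstacle to be the mixed second-order term $\Delta(\phi\times\widetilde m)$, since $\widetilde m$ is only in $L^\infty(H^2)$ uniformly in time; the crucial cancellation $(\nabla\phi\times\widetilde m)\cdot\nabla\phi\equiv 0$ is what ultimately preserves coercivity of the principal part and makes the whole scheme go through.
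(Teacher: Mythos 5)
Your overall scheme coincides with the paper's: the paper proves Theorem \ref{T-AWS} by invoking the arguments of Lemma 4.3 of \cite{SPSK3}, i.e.\ a Galerkin/energy-method treatment of the backward linear system \eqref{AS} in which one tests with $\phi$, exploits the cancellations $(\Delta\widetilde m\times\phi)\cdot\phi=0$, $(\phi\times\widetilde u)\cdot\phi=0$ and $(\partial_i\phi\times\widetilde m)\cdot\partial_i\phi=0$, closes with Gronwall, reads off the $H^1(\Omega)^*$ bound on $\phi_t$ from the equation, and gets uniqueness from linearity. Your time reversal, the use of Remark \ref{MT-WPN} and \eqref{CON-2} for the terminal datum, and the dual-norm estimate are all in line with that.

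There is, however, a concrete flaw in the one estimate you yourself identify as critical. You bound $\int(\phi\times\nabla\widetilde m)\cdot\nabla\phi\,dx$ by $\|\phi\|_{L^2}\|\nabla\widetilde m\|_{L^\infty}\|\nabla\phi\|_{L^2}$ "via $H^2\hookrightarrow L^\infty$" and then claim a Gronwall driver $C(1+\|\widetilde m\|^2_{H^2})$. But $\|\nabla\widetilde m\|_{L^\infty}\lesssim\|\widetilde m\|_{H^2}$ is false for $n=2,3$; the embedding you quote requires $\nabla\widetilde m(t)\in H^2(\Omega)$, i.e.\ $\widetilde m(t)\in H^3(\Omega)$, which the regular solution provides only in $L^2$ in time. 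As written, your argument therefore either uses an invalid embedding or produces an exponential factor involving $\|\widetilde m\|^2_{L^2(0,T;H^3)}$ — finite, so existence and uniqueness still follow, but not the inequality with the norms stated in the theorem, contrary to your claim of recovering "precisely" that factor. To land on norms of the stated type one must redo the splittings, e.g.\ $\bigl|\int(\phi\times\nabla\widetilde m)\cdot\nabla\phi\bigr|\le\|\phi\|_{L^3}\|\nabla\widetilde m\|_{L^6}\|\nabla\phi\|_{L^2}\le C\|\phi\|_{L^2}^{1/2}\|\phi\|_{H^1}^{1/2}\|\widetilde m\|_{H^2}\|\nabla\phi\|_{L^2}$ followed by Young, and estimate the cubic terms through $\|\widetilde m\|^2_{L^6}\|\phi\|_{L^3}^2\le C\|\widetilde m\|^2_{H^1}\|\phi\|_{L^2}\|\phi\|_{H^1}$, which is where the $\|\widetilde m\|^4_{L^\infty(0,T;H^1)}$ contribution in the exponent actually originates (your blanket bound by $(1+\|\widetilde m\|^2_{L^\infty})\|\phi\|^2_{L^2}$ likewise yields $\|\widetilde m\|^2_{L^\infty(0,T;H^2)}$ rather than the stated quantity, and your cancellation of $(\phi\times\widetilde u)\cdot\phi$ removes the $\widetilde u$-dependence altogether). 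So the qualitative conclusion of your proof stands, but the quantitative estimate as stated in the theorem does not follow from your estimates without these corrections.
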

\noindent The proof of Theorem \ref{T-AWS} follows from the arguments of Lemma 4.3 of \cite{SPSK3}.
	
	Now, for any convex subset $S$ of a Hilbert space $\mathcal{H}$, we define the normal cone and the polar cone of tangents of $S$ at $x\in S$ as follows:
	\begin{align*}
	\mathcal{N}_S(x)&=\{ \xi\in \mathcal{H} \ : \ (\xi,y-x)\leq 0\ \ \ \ \ \ \forall \ y\in S\},\\
	\mathcal{P}_S(x)&= \{ \xi \in \mathcal{H}\ :\ (\xi,y)\leq 0\ \ \ \ \ \ \  \forall \ y\in \mathcal{N}_S(x)\}.
	\end{align*}

A feasible direction $\upsilon \in \mathcal{H}$ at $x\in S$ is an element such that $x+\epsilon \upsilon \in S$ for small enough $\epsilon \in \mathbb{R}^+$. Moreover, we denote the cone of feasible directions at $x\in S$  as $\mathcal{F}_S(x)$. Moreover, convexity of $S$ implies that $\overline{\mathcal{F}_S(x)}=\mathcal{P}_S(x)$.

	Now we are ready to state the first-order optimality condition satisfied by the optimal control $\widetilde{u}\in \mathcal{U}_{ad}$ and optimal time $\widetilde{T}$.
	
	\begin{Thm}[First-Order Optimality Condition]\label{T-FOOCT}
		Suppose $\widetilde{T}$ is the optimal time corresponding to the optimal control $\widetilde{u}$ and state $\widetilde{m}$ satisfying conditions \eqref{CON-1} and \eqref{CON-2}. Moreover, assume that there exists a control $u\in \mathcal{U}_{ad}$ such that the cost functional $\mathcal{J}(T^*(u),u)<+\infty$. If $\phi \in \mathcal{U}$ is the weak solution of the adjoint system \eqref{AS} corresponds to the admissible pair $(\widetilde{m}, \widetilde{u})$, then the triplet $(\phi, \widetilde{m}, \widetilde{u})$ satisfies the following variational inequality: 		
		%	\begin{equation}\label{FOOC}	
			%		\int_{\Omega_T} \widetilde{u}\cdot (u-\widetilde{u})\ dx\  dt+\int_{\Omega_T} \nabla \widetilde{u}\cdot \nabla (u-\widetilde{u})\ dx\  dt+ \int_{\Omega_T} \Big((\phi \times \widetilde{m})+  \widetilde{m} \times (\phi \times \widetilde{m})\Big)\cdot (u-\widetilde{u})\ dx\ dt \geq 0, \ \ \forall \ u \in \mathcal{U}.
			%	\int_0^T  \big(\widetilde{u}, (u-\widetilde{u}) \big)\  dt+\int_0^T  \big(\nabla \widetilde{u}, \nabla (u-\widetilde{u})\big)\  dt + \int_0^T  \Big((\phi \times \widetilde{m})+  \widetilde{m} \times (\phi \times \widetilde{m}), (u-\widetilde{u})\Big)\ dt \geq 0, \ \ \forall \ u \in \mathcal{U}_{ad}.	\end{equation}
		\begin{equation}\label{FOOC}
		\mathcal{Y}(h):=	\int_{\Omega_{\widetilde{T}}} \big(\phi + \phi \times \widetilde{m}\big)\cdot \ h\  dx\ dt  +\big(\big(\widetilde{u},h\big)\big)_{W^{1,2}(0,T;H^1(\Omega),H^1(\Omega)^*)}\geq 0, \ \ \ \ \  \forall \ h \in \mathcal{P}_{\mathcal{U}_{ad}}(\widetilde{u}).	
		\end{equation}
	\end{Thm}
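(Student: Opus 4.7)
The plan is to derive \eqref{FOOC} by a perturbation-plus-adjoint argument. For any feasible direction $h \in \mathcal{F}_{\mathcal{U}_{ad}}(\widetilde{u})$ I would set $u_\epsilon := \widetilde u + \epsilon h \in \mathcal{U}_{ad}$ for small $\epsilon>0$, let $m_\epsilon \in \mathcal M$ be the corresponding regular solution given by Theorem \ref{T-RS}, and define $T_\epsilon := T^*(u_\epsilon)$. Optimality of $(\widetilde T, \widetilde u)$ then gives $\epsilon^{-1}\bigl(\mathcal J(T_\epsilon,u_\epsilon)-\mathcal J(\widetilde T,\widetilde u)\bigr)\ge 0$, and passing $\epsilon \downarrow 0$ will yield the desired inequality once the sensitivity $z := \lim_{\epsilon\downarrow 0}\epsilon^{-1}(m_\epsilon-\widetilde m)$ has been identified and eliminated through $\phi$. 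By convexity, $\overline{\mathcal F_{\mathcal U_{ad}}(\widetilde u)} = \mathcal P_{\mathcal U_{ad}}(\widetilde u)$, so the inequality over feasible directions extends by continuity of $\mathcal Y$ on $\mathcal U$ to the whole polar cone.

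The first technical step is to show that $z$ formally satisfies the linearized system
\begin{equation*}
z_t - \Delta z - \widetilde m \times \Delta z - z\times\Delta\widetilde m - z\times\widetilde u +(1+|\widetilde m|^2)\,z + 2(\widetilde m\cdot z)\,\widetilde m = \widetilde m\times h + h,
\end{equation*}
with $z(0)=0$ and homogeneous Neumann data, whose solvability in a suitable space (giving in particular $z \in C([0,\widetilde T];L^2(\Omega))$) follows from the unified linear solvability result Lemma \ref{L-SER}. Standard quasilinear estimates then yield $m_\epsilon\to\widetilde m$ and $\epsilon^{-1}(m_\epsilon-\widetilde m)\to z$ in the appropriate topology. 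Next, Proposition \ref{L-EQ} provides the identity $\tfrac12\|m_\epsilon(T_\epsilon)-m_\Omega\|_{L^2(\Omega)}^2 = \tfrac12\delta^2$; using Remark \ref{MT-WPN} (continuity of $m_{\epsilon,t}$ into $L^2(\Omega)$) and the transversality assumption \eqref{CON-2}, implicit differentiation in $\epsilon$ gives
\begin{equation*}
T'_0 := \frac{dT_\epsilon}{d\epsilon}\bigg|_{\epsilon=0}
= -\frac{\bigl(\widetilde m(\widetilde T)-m_\Omega,\ z(\widetilde T)\bigr)_{L^2(\Omega)}}{\bigl(\widetilde m(\widetilde T)-m_\Omega,\ \widetilde m_t(\widetilde T)\bigr)_{L^2(\Omega)}}.
\end{equation*}
Differentiating $\mathcal J$ and invoking the terminal condition in \eqref{AS} converts the first summand into $(\phi(\widetilde T),z(\widetilde T))_{L^2(\Omega)}$, so
\begin{equation*}
\frac{d\mathcal J}{d\epsilon}\bigg|_{\epsilon=0}
= \widetilde T\,T'_0 + ((\widetilde u,h))_{\mathcal U}
= \bigl(\phi(\widetilde T),\,z(\widetilde T)\bigr)_{L^2(\Omega)} + ((\widetilde u,h))_{\mathcal U}.
\end{equation*}

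The final algebraic step is the adjoint identity: test the linearized $z$-equation against $\phi$ and the adjoint equation \eqref{AS} against $z$, add them, and integrate by parts in space and time. The homogeneous Neumann data on both $\phi$ and $z$ kill the boundary terms, and the cyclic identity $a\cdot(b\times c)=b\cdot(c\times a)$ together with the specific form of \eqref{AS} cancels every bulk term except those arising from $\widetilde m\times h + h$, leaving
\begin{equation*}
\bigl(\phi(\widetilde T),z(\widetilde T)\bigr)_{L^2(\Omega)}
= \int_{\Omega_{\widetilde T}} \phi\cdot(\widetilde m\times h + h)\,dx\,dt
= \int_{\Omega_{\widetilde T}} (\phi + \phi\times\widetilde m)\cdot h\,dx\,dt.
\end{equation*}
Substitution into the expression for $d\mathcal J/d\epsilon|_{\epsilon=0}$ yields exactly $\mathcal Y(h) \ge 0$, and density closes the argument.

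The main obstacle I anticipate is rigorously differentiating $\epsilon\mapsto T_\epsilon$ at $\epsilon=0$. One must ensure that (i) $T_\epsilon$ remains strictly inside $(0,T)$ for small $\epsilon$ (for which \eqref{CON-1} is essential), (ii) $T_\epsilon$ is the \emph{first} crossing of the $\delta$-tube boundary rather than a later one (grazing scenarios are ruled out by the strict inward-crossing assumption \eqref{CON-2}), and (iii) the map $\epsilon\mapsto m_\epsilon(T_\epsilon)$ is differentiable into $L^2(\Omega)$. The correct tool is the implicit function theorem applied to $F(\epsilon,t):=\tfrac12\|m_\epsilon(t)-m_\Omega\|_{L^2(\Omega)}^2-\tfrac12\delta^2$ near $(0,\widetilde T)$: condition \eqref{CON-2} gives $\partial_t F(0,\widetilde T)\ne 0$, producing a unique $C^1$ branch $\epsilon\mapsto T_\epsilon$, and the chain rule then delivers $T'_0$ as above. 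Once this is in place the remaining manipulations are routine energy-type integrations built on Lemma \ref{L-SER} and Theorem \ref{T-AWS}.
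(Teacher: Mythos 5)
Your proposal is correct in outline and follows the same overall strategy as the paper: perturb along a feasible direction, linearize the state equation, differentiate the minimal time, convert the resulting term via the adjoint duality identity (your final display is exactly the paper's \eqref{FO-E7}), and close by density of $\mathcal{F}_{\mathcal{U}_{ad}}(\widetilde{u})$ in $\mathcal{P}_{\mathcal{U}_{ad}}(\widetilde{u})$. The one genuinely different step is how you differentiate $\epsilon\mapsto T_\epsilon$: you invoke the implicit function theorem for $F(\epsilon,t)=\tfrac12\|m_\epsilon(t)-m_\Omega\|^2_{L^2(\Omega)}-\tfrac12\delta^2$, whereas the paper first proves $T_\rho\to\widetilde{T}$ by a contradiction argument (its Step III, which rules out an earlier entry into the $\delta$-tube using minimality of $\widetilde{T}$ and $m_\rho\to\widetilde m$ in $C([0,T];L^2)$) and then obtains the right derivative by an explicit remainder expansion leading to \eqref{TD-1} and \eqref{FO-E3}. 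The IFT route is cleaner on paper but costs more than you acknowledge: you need $F$ to be $C^1$ jointly near $(0,\widetilde T)$ (hence differentiability of the control-to-state map at controls near $\widetilde u$ and for two-sided $\epsilon$, which is delicate since $h$ is only a feasible direction), and the IFT only produces a local zero branch — identifying that branch with the globally defined minimal time $T^*(u_\epsilon)$ still requires precisely the "no earlier crossing" argument of the paper's Step III, which assumption \eqref{CON-2} alone does not give (it only governs behavior near $\widetilde T$). The paper's expansion avoids all of this by using only the one-sided Gateaux derivative at $\widetilde u$. Finally, two attributions in your sketch should be corrected: solvability of the linearized system \eqref{T-LS} is Lemma \ref{L-SLS}, not Lemma \ref{L-SER}; and the convergence $\epsilon^{-1}(m_\epsilon-\widetilde m)\to z$ is not a matter of "standard quasilinear estimates" — it is exactly the content of Lemma \ref{PAN-1}, which rests on the nonlinear remainder system \eqref{T-LS2} and the smallness-in-$\rho$ machinery of Lemma \ref{L-SER}, and is where most of the technical effort of the paper's proof actually lies.
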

	The proof of this theorem is given in Subsection \ref{S-FOOC}.
	
	Moving forward, we will explore the local second-order optimality condition that the optimal control adheres to. First, we will define the set 
	\begin{equation}\label{SET-EQ0}
		\Upsilon(\widetilde{u}):= \Big\{  h\in W^{1,2}(0,T;H^1(\Omega),H^1(\Omega)^*) \ :\ 	\mathcal{Y}(h)=0\big\}.
	\end{equation}	
	\begin{Thm}[Second-Order Optimality Condition]\label{T-SOOC}
		Suppose $\widetilde{u}$ is a control in $\mathcal{U}_{ad}$, and $\widetilde{m}$ is the corresponding state solution. Let $z_h\in \mathcal{M}$ be the regular solution of the system \eqref{T-LS} corresponding to any $h\in \mathcal{U}$. Assume that, along with conditions \eqref{CON-1} and \eqref{CON-2}, the function $t\mapsto \widetilde{m}(t)$ is twice Fr\'echet differentiable at $\widetilde{T}$. If the triplet $(\widetilde{T},\widetilde{u},\widetilde{m})$ satisfies the first-order optimality condition \eqref{FOOC} and the following inequality:
		\begin{equation}\label{SO-ME}
			\mathcal{Q}(h):=\mathcal{D}_{\widetilde{u}}[h]^2+2\ \widetilde{T} \ \mathcal{G}_{\widetilde{u}}[h,h] + \|h\|^2_{W^{1,2}(0,T;H^1(\Omega),H^1(\Omega)^*)}>0,
		\end{equation}
		for all $h\in \Big(\mathcal{P}_{\mathcal{U}_{ad}}(\widetilde{u}) \cap \Upsilon(\widetilde{u})\Big)\backslash \{0\} $, where 
		\begin{equation}\label{SOC-D}
			\mathcal{D}_{\widetilde{u}}[h]=- \frac{\big( z_h ( \widetilde{T}) , \widetilde{m}(\widetilde{T})-m_{\Omega} \big)_{L^2(\Omega)}}{\big(\widetilde{m}(\widetilde{T})-m_{\Omega},\widetilde{m}_t(\widetilde{T})\big)_{L^2(\Omega)}},
		\end{equation}
		and 
		\begin{align}\label{SOC-GN}
			&\big(\widetilde{m}_t(\widetilde{T}), \widetilde{m}(\widetilde{T})-m_{\Omega}\big)_{L^2(\Omega)}\,\mathcal{G}_{\widetilde{u}}[h,h] =-\frac{1}{2}\  \big\|\, \widetilde{m}_t(\widetilde{T})\, \mathcal{D}_{\widetilde{u}}[h] + z_h(\widetilde{T})\,\big\|^2_{L^2(\Omega)}\nonumber\\
			&\hspace{0.4cm}-\bigg( \frac{1}{2}\ \widetilde{m}_{tt}(\widetilde{T})\,\mathcal{D}_{\widetilde{u}}[h]^2  + z_{h_t}(\widetilde{T}) \mathcal{D}_{\widetilde{u}}[h]    ,\ \widetilde{m}(\widetilde{T}) -m_{\Omega}\bigg)_{L^2(\Omega)}\nonumber\\
			&\hspace{0.4cm}+\big(\widetilde{m}_t(\widetilde{T}), \widetilde{m}(\widetilde{T})-m_{\Omega}\big)_{L^2(\Omega)}\ \frac{1}{\widetilde{T}}\int_{\Omega_{\widetilde{T}}} \Big( z_h\times \Delta z_h+ z_h\times h -2 \, \big(z_h\cdot \widetilde{m}\big)\,z_h - |z_h|^2\, \widetilde{m} \Big)\ \phi \ dx\ dt,
		\end{align}
		then $\widetilde{u}$ is a local optimal control. That is, there exist $\epsilon>0$ and $\sigma>0$ such that the following inequality holds:
		\begin{equation}\label{SO-GC}
			\mathcal{J}(u) \geq \mathcal{J}(\widetilde{u}) +\epsilon \ \|u-\widetilde{u}\|^2_	{W^{1,2}(0,T;H^1(\Omega),H^1(\Omega)^*)},
		\end{equation}
		for every $u\in \mathcal{U}_{ad}$ with $\|u-\widetilde{u}\|_{W^{1,2}(0,T;H^1(\Omega),H^1(\Omega)^*)}\leq \sigma$.
	\end{Thm}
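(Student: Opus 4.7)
The plan is the standard second-order sufficient condition argument by contradiction, based on a careful second-order Taylor expansion of $\mathcal{J}$ at $\widetilde{u}$. The three ingredients are: (i) twice Fréchet differentiability of the control-to-state map and of the minimal-time map $u\mapsto T^*(u)$, (ii) an explicit formula for the quadratic part of $\mathcal{J}$ in terms of the adjoint state $\phi$ and the sensitivities $z_h, z_{h_t}$, and (iii) a weak-compactness argument in $\mathcal{U}$.

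First, I would establish twice Fréchet differentiability of $u\mapsto T^*(u)$ at $\widetilde{u}$. Since $\|m_u(T^*(u))-m_\Omega\|_{L^2}^2=\delta^2$ by Proposition \ref{L-EQ} and, by \eqref{CON-2}, $\bigl(\widetilde{m}(\widetilde{T})-m_\Omega,\widetilde{m}_t(\widetilde{T})\bigr)_{L^2}<0$, the implicit function theorem applied to $F(u,\tau):=\tfrac12\|m_u(\tau)-m_\Omega\|_{L^2}^2-\tfrac12\delta^2$ yields twice differentiability, provided the state map $u\mapsto m_u$ is twice differentiable into $C([0,T];L^2(\Omega))$. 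The first derivative at $\widetilde{u}$ in direction $h$ is the solution $z_h$ of the linearized system \eqref{T-LS}, and the second derivative $z_{h,h}$ solves \eqref{ER-SY}; solvability of both is provided by Lemma \ref{L-SER}. Implicitly differentiating $F=0$ twice and using the adjoint equation \eqref{AS} to eliminate the second-order sensitivity $z_{h,h}$ reproduces exactly the quantities in \eqref{SOC-D} and \eqref{SOC-GN}, so that $DT^*(\widetilde{u})[h]=\mathcal{D}_{\widetilde{u}}[h]$ and $D^2T^*(\widetilde{u})[h,h]=2\,\mathcal{G}_{\widetilde{u}}[h,h]$.

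Next, I would assemble the Taylor expansion
\[
\mathcal{J}(\widetilde{u}+h)-\mathcal{J}(\widetilde{u}) = D\mathcal{J}(\widetilde{u})[h] + \tfrac12\,\mathcal{Q}(h) + r(h), \qquad r(h)=o(\|h\|_{\mathcal{U}}^2),
\]
where $\tfrac12\mathcal{Q}(h)=\tfrac12\mathcal{D}_{\widetilde{u}}[h]^2+\widetilde{T}\,\mathcal{G}_{\widetilde{u}}[h,h]+\tfrac12\|h\|_{\mathcal{U}}^2$ is precisely the form in \eqref{SO-ME}, as follows from differentiating $\tfrac12 T^*(u)^2$ twice. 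The first-order condition \eqref{FOOC} gives $D\mathcal{J}(\widetilde{u})[h]\ge 0$ for every $h\in\mathcal{P}_{\mathcal{U}_{ad}}(\widetilde{u})$, with equality exactly on $\Upsilon(\widetilde{u})$ by \eqref{SET-EQ0}. Suppose, for contradiction, that \eqref{SO-GC} fails for every $\epsilon,\sigma>0$: then there exist $u_n\in\mathcal{U}_{ad}$ with $\rho_n:=\|u_n-\widetilde{u}\|_{\mathcal{U}}\to 0$ and $\mathcal{J}(u_n)-\mathcal{J}(\widetilde{u})<\rho_n^2/n$. Set $h_n:=(u_n-\widetilde{u})/\rho_n$ and extract $h_n\rightharpoonup h^\ast$ in $\mathcal{U}$; convexity of $\mathcal{U}_{ad}$ gives $h^\ast\in\mathcal{P}_{\mathcal{U}_{ad}}(\widetilde{u})$. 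Dividing the Taylor expansion by $\rho_n^2$ forces $D\mathcal{J}(\widetilde{u})[h_n]\to 0$, whence $h^\ast\in\Upsilon(\widetilde{u})$. Using an Aubin--Lions argument on \eqref{T-LS} (weak convergence of $h_n$ in $\mathcal{U}$ yields strong convergence of $z_{h_n}(\widetilde{T})$ and $z_{h_{n,t}}(\widetilde{T})$ in $L^2(\Omega)$), the cross terms $\mathcal{D}_{\widetilde{u}}[h_n]$ and $\mathcal{G}_{\widetilde{u}}[h_n,h_n]$ converge to $\mathcal{D}_{\widetilde{u}}[h^\ast]$ and $\mathcal{G}_{\widetilde{u}}[h^\ast,h^\ast]$, while $\|h^\ast\|_{\mathcal{U}}^2\le\liminf_n\|h_n\|_{\mathcal{U}}^2=1$. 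Taking $\liminf$ yields $\mathcal{Q}(h^\ast)\le 0$, contradicting \eqref{SO-ME} when $h^\ast\ne 0$; when $h^\ast=0$, the same inequality combined with the strong convergence of the cross terms forces $\|h_n\|_{\mathcal{U}}\to 0$, contradicting $\|h_n\|_{\mathcal{U}}=1$.

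The main obstacle is the rigorous control of the nonlinear remainder $r(h)$: the terms $m\times\Delta m$ and $|m|^2 m$ in \eqref{NLP} generate cubic and higher-order contributions whose bound as $o(\|h\|_{\mathcal{U}}^2)$ requires uniform $L^\infty(0,T;H^2(\Omega))$ estimates on $m_{\widetilde{u}+h}$ (via Theorem \ref{T-RS}) together with continuous dependence of $z_h$ and $z_{h,h}$ on $h$ (via Lemma \ref{L-SER}). A secondary delicate point is consistency between the time-Taylor expansion of $\widetilde{m}$ at $\widetilde{T}$, which produces $\widetilde{m}_{tt}(\widetilde{T})$ in \eqref{SOC-GN}, and the implicit differentiation of $F(u,T^*(u))=0$; the hypothesis that $t\mapsto\widetilde{m}(t)$ is twice Fréchet differentiable at $\widetilde{T}$, combined with Remark \ref{MT-WPN}, gives exactly the regularity needed for the terminal evaluations to be well-defined and for the second-order chain rule to close.
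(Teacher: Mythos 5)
Your contradiction skeleton (normalized sequence $h_n=(u_n-\widetilde u)/\rho_n$, weak limit $h^\ast$, passage to $\Upsilon(\widetilde u)$ via the first-order condition, lower semicontinuity of the norm together with strong convergence of $\mathcal D_{\widetilde u}[h_n]$ and $\mathcal G_{\widetilde u}[h_n,h_n]$, and the two cases $h^\ast\neq 0$ and $h^\ast=0$) coincides with the paper's Steps I, III and IV, and your treatment of $h^\ast=0$ is exactly the paper's argument. The genuine gap is in how you produce the second-order expansion of the minimal time. You route it through the implicit function theorem for $F(u,\tau)=\tfrac12\|m_u(\tau)-m_\Omega\|_{L^2}^2-\tfrac12\delta^2$ and claim a uniform Taylor remainder $r(h)=o(\|h\|_{\mathcal U}^2)$. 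To obtain the second derivative of the implicit branch you need $F$ to be $C^2$ in a neighbourhood of $(\widetilde u,\widetilde T)$, and in particular $\partial_\tau^2F(u,\tau)=\|m_{u,t}(\tau)\|_{L^2}^2+\big(m_u(\tau)-m_\Omega,m_{u,tt}(\tau)\big)_{L^2}$ must exist and depend continuously on $(u,\tau)$ for \emph{all} controls $u$ near $\widetilde u$. The theorem assumes only that the single trajectory $\widetilde m$ is twice differentiable at the single time $\widetilde T$; by Lemma \ref{L-MTT}, existence of $m_{u,tt}$ costs additional regularity ($u\in L^2(0,T;H^3(\Omega))$, $u_t\in C([0,T];L^2(\Omega))$, $m_0\in H^4(\Omega)$) that is not available for arbitrary $u\in\mathcal U_{ad}$, so your IFT step uses hypotheses the theorem does not grant. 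The paper avoids this entirely: it never differentiates nearby trajectories twice, but Taylor-expands only $\widetilde m$ at $\widetilde T$ (equation \eqref{Z-5}), expands $z_k(T_k^\ast)$ to first order with the remainder controlled in $H^1(\Omega)^\ast$ via Lemma \ref{Z-K} (equation \eqref{Z-3}), and builds the second-order state expansion $m_k=\widetilde m+\rho_k z_k+\tfrac{\rho_k^2}{2}\xi_k+\rho_k^2\zeta_k$ from the auxiliary systems \eqref{XI-LS} and \eqref{ER-SY} (Lemma \ref{L-PAN-XI}), which yields the sequence-wise expansions \eqref{Z-1}--\eqref{Z-2} rather than a uniform Fr\'echet--Taylor formula.

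Three further points. First, you misidentify the second-variation system: \eqref{ER-SY} is the remainder system for $\zeta_k$, while the second-order sensitivity $\xi_k$ solves \eqref{XI-LS}; and the elimination of $\xi(\widetilde T)$ that produces the integral term in \eqref{SOC-GN} is not automatic — it is the specific adjoint identity obtained by pairing \eqref{XI-LS} with $\phi$ and using the weak formulation \eqref{WEFO}, which expresses $\big(\xi(\widetilde T),\widetilde m(\widetilde T)-m_\Omega\big)_{L^2}$ through $\int_{\Omega_{\widetilde T}}\big(z\times\Delta z+z\times h-2(z\cdot\widetilde m)z-|z|^2\widetilde m\big)\cdot\phi\,dx\,dt$; you assert this step but do not carry it out. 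Second, even granting the IFT route, the locally unique root $\tau(u)$ it provides must still be identified with the first-hitting time $T^*(u)$ (an infimum), which requires showing that $T^*(u_n)$ is well defined and converges to $\widetilde T$, as in Steps II--III of the proof of Theorem \ref{T-FOOCT}. Third, your claim that $z_{h_n,t}(\widetilde T)$ converges strongly in $L^2(\Omega)$ is stronger than what the estimates give: Lemma \ref{Z-K} yields convergence only in $C([0,T];H^1(\Omega)^\ast)$, which is nevertheless sufficient for $\mathcal G_{\widetilde u}[h_n,h_n]\to\mathcal G_{\widetilde u}[h,h]$ since that term is paired against the fixed function $\widetilde m(\widetilde T)-m_\Omega$.
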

A detailed proof of this theorem is presented in Section \ref{SEC-SO}.

		The validity of the term $\widetilde{m}_{tt}(\widetilde{T})\,\mathcal{D}_{\widetilde{u}}[h]^2 $ relies on our assumption on the time regularity of the state variable $m$ stated in Theorem \ref{T-SOOC}. Though we have not given a detailed proof of twice differentiability of $\widetilde{m}$ at $\widetilde T,$ we have proved a lemma that gives under what conditions we can indeed achieve this temporal regularity of $\widetilde{m}$.		
		\begin{Lem}\label{L-MTT} Let $m\in W^{1,2}(0,T;H^3(\Omega),H^1(\Omega))$ be the regular solution to the system \eqref{NLP}. Assume that the control $u\in L^2(0,T;H^3(\Omega))$ and the initial data $m_0\in H^4(\Omega)$. Then, the corresponding solution has the regularity $m\in L^2(0,T;H^5(\Omega))\cap C([0,T];H^4(\Omega))$ with $m_t\in C([0,T];H^2(\Omega))$.\\			
		Moreover, if we assume further regularity on the control $u_t\in C([0,T];L^2(\Omega))$, then the temporal regularity $m_{tt}\in C([0,T];L^2(\Omega))$ holds. 
		\end{Lem}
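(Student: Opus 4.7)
The strategy is a parabolic bootstrap that exploits the antisymmetry of the gyromagnetic term $m\times\Delta m$, followed by a time differentiation of the PDE for the second temporal derivative.

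First, for the higher spatial regularity: starting from the regular solution $m\in W^{1,2}(0,T;H^3,H^1)$ I would derive higher-order energy estimates controlling $\|m\|_{H^4}^2$ and then $\|m\|_{H^5}^2$, for example by applying $\Delta$ (resp.\ $\Delta^2$) to \eqref{NLP} and pairing with $\Delta^2 m$ in $L^2(\Omega)$. The crucial cancellation $\int (m\times\Delta^{k+1}m)\cdot\Delta^{k+1}m\,dx=0$ ensures that the top-order contribution of the quasilinear term $m\times\Delta m$ is harmless; the remaining commutator terms are handled by H\"older and Gagliardo--Nirenberg inequalities in $n\le 3$, absorbed into the parabolic dissipation using the $L^\infty(0,T;H^2)$ a priori bound already supplied by Theorem~\ref{T-RS}. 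Together with the hypotheses $m_0\in H^4(\Omega)$, $\partial m_0/\partial\eta=0$, and $u\in L^2(0,T;H^3(\Omega))$, a Gr\"onwall argument yields
\[
 m\in L^2(0,T;H^5(\Omega))\cap L^\infty(0,T;H^4(\Omega)).
\]
Continuity $m\in C([0,T];H^4(\Omega))$ then follows from the classical embedding $L^2(H^5)\cap H^1(H^3)\hookrightarrow C(H^4)$, where $m_t\in L^2(H^3)$ is read off the equation $m_t=\Delta m+F(m,u)$ using that $H^3(\Omega)$ is a Banach algebra for $n\le 3$. The claim $m_t\in C([0,T];H^2(\Omega))$ follows from an analogous interpolation/embedding argument applied to $m_t$ via the PDE.

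For the second conclusion, under the added assumption $u_t\in C([0,T];L^2(\Omega))$, I would formally differentiate \eqref{NLP} in time to obtain
\begin{align*}
m_{tt} &= \Delta m_t + m_t\times\Delta m + m\times\Delta m_t + m_t\times u + m\times u_t \\
&\qquad - 2(m\cdot m_t)\,m - (1+|m|^2)\,m_t + u_t,
\end{align*}
and verify that every right-hand-side term lies in $C([0,T];L^2(\Omega))$. The embeddings $m\in C(H^4)\hookrightarrow C(L^\infty)$ and $m_t\in C(H^2)\hookrightarrow C(L^\infty)$ in $n\le 3$, together with $\Delta m_t\in C(L^2)$ (from $m_t\in C(H^2)$), handle all state-dependent terms, while the added hypothesis $u_t\in C([0,T];L^2)$ controls the forcing contributions $m\times u_t$ and $u_t$. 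The formal differentiation is justified either by a difference-quotient argument on the PDE, or by treating $v:=m_t$ as the solution of the linear parabolic equation above---whose coefficients inherit the regularity from the first part and whose forcing lies in $C([0,T];L^2)$---and invoking standard linear parabolic regularity.

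The main obstacle is closing the top-order estimates in the first part in the borderline dimension $n=3$: applying $\nabla^k$ to $m\times\Delta m$ produces commutator terms whose highest derivative must be balanced against $\|\nabla^{k+1}m\|_{L^2}^2$ via fractional Gagliardo--Nirenberg interpolation, and the absorption into the dissipative term has to be performed carefully using the a priori $L^\infty(H^2)$ bound from \eqref{SSEE2}. This parallels, one derivative higher, the scheme already used to prove Theorem~\ref{T-RS} in \cite{SPSK3}, so the estimates are structurally similar but with an additional derivative to track at each step.
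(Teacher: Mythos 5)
Your proposal follows essentially the same route as the paper's Appendix proof: a higher-order energy estimate (the paper performs it at the Galerkin level, applying $\nabla\Delta$ and testing with $-\nabla\Delta^2 m_n$) in which the cross-product cancellation kills the top-order gyromagnetic contribution, H\"older/Sobolev bounds plus the $L^\infty(H^2)\cap L^2(H^3)$ a priori bounds and Gr\"onwall give $m\in L^\infty(H^4)\cap L^2(H^5)$, the Evans-type embedding gives $C([0,T];H^4)$, and the regularity of $m_t$ and $m_{tt}$ is read off the equation and its time derivative using $u_t\in C([0,T];L^2)$. The differences (your two-step $H^4$/$H^5$ bootstrap versus the paper's single estimate, and your explicit difference-quotient justification) are cosmetic, so the proposal is correct and matches the paper's argument.
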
	
        The proof of Lemma \ref{L-MTT} is given in Appendix \ref{app}. 
		Nevertheless, to maintain the generality of our results, we have chosen to impose this time regularity as an assumption in Theorem \ref{T-SOOC} rather than putting all these higher-order regularity conditions on the control and initial data.

\subsection{Inequalities}
	
	\noindent In this subsection, we introduce several equivalent norms and key inequalities that are fundamental to our analysis. We start by examining the essential properties of the cross product and providing equivalent norm estimates, which are outlined in the following lemmas.
	\begin{Lem}\label{CPP}
		Let $a,b$ and $c$ be three vectors of $\mathbb{R}^3$, then the following vector identities hold: $a\cdot(b \times c)=-(b \times a)\cdot c$  and $a \cdot (a \times b)=0$.  Moreover, assume that $1 \leq r,s \leq \infty, \ (1/r)+(1/s)=1$ and $p\geq 1$, then if $f\in L^{pr}(\Omega)$ and $g\in L^{ps}(\Omega),$ we have
		\begin{equation*}\label{ES0}
			\|f \times g\|_{L^p(\Omega)} \leq \|f\|_{L^{pr}(\Omega)} \|g\|_{L^{ps}(\Omega)}.	
		\end{equation*}	
	\end{Lem}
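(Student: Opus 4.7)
The statement combines two elementary vector identities with a Hölder-type estimate for the cross product, so the plan is just to reduce each assertion to a standard fact and keep the write-up short.

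For the first identity, I will use the determinant form of the scalar triple product. Writing $a\cdot(b\times c)=\det[a\,|\,b\,|\,c]$ and $(b\times a)\cdot c=c\cdot(b\times a)=\det[c\,|\,b\,|\,a]$, the antisymmetry of the determinant under the swap of the first and third columns gives $\det[c\,|\,b\,|\,a]=-\det[a\,|\,b\,|\,c]$, which is exactly $a\cdot(b\times c)=-(b\times a)\cdot c$. Alternatively, I can invoke the cyclic symmetry $a\cdot(b\times c)=b\cdot(c\times a)=c\cdot(a\times b)$ together with the antisymmetry $b\times a=-a\times b$ of the cross product. For the second identity $a\cdot(a\times b)=0$, I will either cite the geometric fact that $a\times b$ is orthogonal to $a$, or take $c=a$ in the triple-product identity to get $a\cdot(a\times b)=-(a\times a)\cdot b=0$ since $a\times a=0$.

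For the inequality, the key pointwise observation is that for any two vectors $u,v\in\mathbb{R}^3$,
\[
|u\times v|=|u|\,|v|\,|\sin\theta|\le |u|\,|v|,
\]
so pointwise a.e. in $\Omega$,
\[
|(f\times g)(x)|\le |f(x)|\,|g(x)|.
\]
Raising to the $p$-th power and integrating, then applying Hölder's inequality with the conjugate pair $(r,s)$ to the functions $|f|^p\in L^r(\Omega)$ and $|g|^p\in L^s(\Omega)$ gives
\[
\int_\Omega |f\times g|^p\,dx\le \int_\Omega |f|^p|g|^p\,dx\le \Bigl(\int_\Omega |f|^{pr}\,dx\Bigr)^{1/r}\Bigl(\int_\Omega |g|^{ps}\,dx\Bigr)^{1/s}=\|f\|_{L^{pr}(\Omega)}^p\,\|g\|_{L^{ps}(\Omega)}^p,
\]
and taking $p$-th roots yields the claimed bound. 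The endpoint cases $r=1,s=\infty$ and $r=\infty,s=1$ follow by the usual interpretation of Hölder, and the case $p=\infty$ is immediate from the pointwise estimate together with $\|fg\|_{L^\infty}\le \|f\|_{L^\infty}\|g\|_{L^\infty}$.

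There is no real obstacle here; the only thing to be careful about is making sure the pointwise bound $|f\times g|\le |f|\,|g|$ is stated cleanly before invoking Hölder, since this is the single nontrivial ingredient that distinguishes the cross product from a general bilinear map and that makes the Hölder-type estimate go through with the sharp constant $1$.
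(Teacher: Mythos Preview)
Your proof is correct. The paper does not actually prove this lemma: it is stated as a collection of elementary facts and left without proof, so there is no ``paper's own proof'' to compare against. Your argument via the determinant form of the scalar triple product (or equivalently cyclic invariance plus antisymmetry of $\times$) for the identities, and the pointwise bound $|f\times g|\le |f|\,|g|$ followed by H\"older's inequality for the $L^p$ estimate, is exactly the standard and expected justification.
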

	
	\begin{Lem}[see, \cite{KW}]\label{EN}
		Let $\Omega$ be a bounded smooth domain in $\mathbb{R}^n$ and $k \in \mathbb{N}$. There exists a constant $C_{k,n}>0$ such that for all $m \in H^{k+2}(\Omega)$ and $\frac{\partial m}{\partial \eta}\big|_{\partial \Omega}=0,$ it holds that
		\begin{equation*}\label{ES1}
			\|m\|_{H^{k+2}(\Omega)} \leq C_{k,n} \left(\|m\|_{L^2(\Omega)}+ \|\Delta m\|_{H^k(\Omega)}\right).	
		\end{equation*}
	\end{Lem}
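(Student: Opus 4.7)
The plan is to bootstrap regularity in three phases, combining higher-order parabolic energy estimates on \eqref{NLP} with time-differentiation of the equation and the elliptic estimate of Lemma \ref{EN}.

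First, I would derive the spatial regularity $m\in L^2(0,T;H^5(\Omega))\cap L^\infty(0,T;H^4(\Omega))$. Rewriting \eqref{NLP} as $m_t-\Delta m=G(m,u)$ with $G(m,u):=m\times\Delta m+m\times u-(1+|m|^2)m+u$, I would apply $\Delta^2$ to both sides and test against $\Delta^2 m$, producing
\[ \tfrac{1}{2}\tfrac{d}{dt}\|\Delta^2 m\|_{L^2(\Omega)}^2+\|\nabla\Delta^2 m\|_{L^2(\Omega)}^2=(\Delta^2 G(m,u),\Delta^2 m)_{L^2(\Omega)}. \]
The critical term is $\Delta^2(m\times\Delta m)$, whose highest-order piece involves $m\times\Delta^3 m$; after spatial integration by parts and use of the identity $a\cdot(a\times b)=0$ from Lemma \ref{CPP}, this term yields expressions controllable by $\|\nabla m\|_{L^\infty(\Omega)}\|\nabla\Delta^2 m\|_{L^2(\Omega)}\|\Delta^2 m\|_{L^2(\Omega)}$ and can be absorbed into $\|\nabla\Delta^2 m\|_{L^2(\Omega)}^2$ via Young's inequality (using $H^3\hookrightarrow W^{1,\infty}$ in $n\leq 3$). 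The lower-order terms $\Delta^2(|m|^2 m)$ and $\Delta^2(m\times u)$ are bounded by Moser-type product estimates in terms of $\|m\|_{H^4(\Omega)}$ and $\|u\|_{H^3(\Omega)}$. A Gronwall argument, exploiting the a priori bound on $m$ from Theorem \ref{T-RS}, then closes the estimate, and Lemma \ref{EN} (with $k=2,3$) converts the Laplacian norms into the desired Sobolev norms.

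Second, for $m_t\in C([0,T];H^2(\Omega))$, I would differentiate \eqref{NLP} in time to obtain a linear parabolic system for $v:=m_t$:
\[ v_t-\Delta v=v\times\Delta m+m\times\Delta v+v\times u+m\times u_t-(1+|m|^2)v-2(m\cdot v)m+u_t, \]
with initial value $v(0)=\Delta m_0+G(m_0,u(0))$, which lies in $H^2(\Omega)$ provided the control is sufficiently regular at $t=0$ (justified by approximating $u$ in the control space and passing to the limit). Testing with $\Delta^2 v$ and controlling variable-coefficient terms through the $L^\infty(H^4)$ bound on $m$ from Step 1 gives $v\in L^\infty(H^2)\cap L^2(H^3)$ with $v_t\in L^2(L^2)$, hence $m_t\in C([0,T];H^2(\Omega))$ by the standard embedding; the same interpolation combined with Step 1 yields $m\in C([0,T];H^4(\Omega))$.

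Finally, for $m_{tt}\in C([0,T];L^2(\Omega))$, I would use the time-differentiated equation to write
\[ m_{tt}=\Delta m_t+m_t\times\Delta m+m\times\Delta m_t+m_t\times u+m\times u_t-(1+|m|^2)m_t-2(m\cdot m_t)m+u_t, \]
and verify that each summand on the right-hand side is continuous in $t$ with values in $L^2(\Omega)$: $\Delta m_t\in C(L^2)$ follows from Step 2; the cross-products involving second spatial derivatives lie in $C(L^2)$ via the embedding $H^2\hookrightarrow L^\infty$ in $n\leq 3$ applied to the less-differentiated factor; and the source $u_t\in C(L^2)$ by hypothesis. The principal obstacle throughout is taming the quasilinear term $m\times\Delta m$ at each regularity level, where naive product estimates fail because $\Delta m$ has one fewer derivative than $m$; exploiting the orthogonality structure of the cross product through careful integration by parts is what makes the top-order contribution absorbable into the parabolic dissipation. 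A secondary technical difficulty is the compatibility of initial data for the time-differentiated equation, which requires either additional hypotheses on $u(0)$ or a density argument in the control space.
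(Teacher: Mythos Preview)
Your proposal does not address the stated lemma at all. The statement in question is Lemma~\ref{EN}, a purely elliptic regularity estimate asserting that
\[
\|m\|_{H^{k+2}(\Omega)} \leq C_{k,n}\bigl(\|m\|_{L^2(\Omega)}+\|\Delta m\|_{H^k(\Omega)}\bigr)
\]
for functions satisfying the Neumann boundary condition. The paper does not prove this; it is quoted from the reference \cite{KW}. Your proposal, by contrast, is a proof sketch for Lemma~\ref{L-MTT}, the higher temporal regularity result for solutions of the time-dependent LLB system \eqref{NLP}. Indeed, you explicitly \emph{invoke} Lemma~\ref{EN} as a tool (``Lemma~\ref{EN} (with $k=2,3$) converts the Laplacian norms into the desired Sobolev norms''), so what you have written cannot be a proof of Lemma~\ref{EN} itself.

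If your intended target was in fact Lemma~\ref{L-MTT}, then your approach is essentially the same as the paper's proof in the Appendix: both derive a higher-order energy identity by testing with $\nabla\Delta^2 m$ (equivalently, applying $\Delta^2$ and testing with $\Delta^2 m$ after an integration by parts), use the cross-product orthogonality $a\cdot(a\times b)=0$ to absorb the top-order contribution from $m\times\Delta m$, close with Gronwall using the a~priori bounds from Theorem~\ref{T-RS}, and then read off the continuity of $m_t$ and $m_{tt}$ directly from the equation. The only notable difference is that the paper carries out the estimate at the Galerkin level \eqref{4-GA} and passes to the limit, whereas you argue formally with the solution $m$; the paper also handles $m_t\in C([0,T];H^2)$ by substituting the obtained spatial regularity back into \eqref{NLP} rather than by time-differentiating and running a separate energy estimate, which avoids your compatibility issue for $v(0)$.
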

	\noindent Using Lemma \ref{EN}, we can define an equivalent norm on $H^{k+2}(\Omega)$ as follows 
	$$\|m\|_{H^{k+2}(\Omega)}:=\|m\|_{L^2(\Omega)}+\|\Delta m\|_{H^k(\Omega)}.$$
	
	\begin{Pro}\label{PROP1}
	Let $\Omega$ be a regular bounded subset of  $\mathbb{R}^2$ or $\mathbb{R}^3$. There exists a constant $C>0$ depending on $\Omega$ such that for all $m \in H^2(\Omega)$ with $\frac{\partial m}{\partial \eta}\big|_{\partial\Omega}=0,$ we have
	\begin{eqnarray} 
		\|m\|_{L^\infty(\Omega)} &\leq& C\ \left(\|m\|^2_{L^2(\Omega)}+ \|\Delta m\|^2_{L^2(\Omega)}\right)^{\frac{1}{2}},\label{ES2}\\
		\|\nabla m\|_{L^s(\Omega)} &\leq& C\  \|\Delta m\|_{L^2(\Omega)}, \ \ \forall \ s \in [1,6],\label{ES3}\\
		\|D^2m\|_{L^2(\Omega)} &\leq& C\ \|\Delta m\|_{L^2(\Omega)}.\label{ES4}	
	\end{eqnarray}
	Moreover, for every $m \in H^3(\Omega)$ with $\frac{\partial m}{\partial \eta}\big|_{\partial\Omega}=0,$ we have
	\begin{eqnarray}
		\|\Delta m\|_{L^2(\Omega)} &\leq& C\ \|\nabla \Delta m\|_{L^2(\Omega)},\label{ES7}\\ 	
		%	\|\Delta m\|_{L^4(\Omega)} &\leq& C\ \|\Delta m\|^{\frac{1}{2}}_{L^2(\Omega)} \|\nabla \Delta m\|^{\frac{1}{2}}_{L^2(\Omega)},\label{ES}\\
		%		\|\nabla m\|_{L^\infty(\Omega)} &\leq& C\ \|\nabla m\|^{\frac{1}{2}}_{L^2(\Omega)} \|\nabla \Delta m\|^{\frac{1}{2}}_{L^2(\Omega)},\label{ES8}\\
		\|D^3m\|_{L^2(\Omega)} &\leq& C\ \|\nabla \Delta m\|_{L^2(\Omega)},\label{ES10}\\
		\|D^2m\|_{L^3(\Omega)} &\leq& C\ \|\Delta m\|^\frac{1}{2}_{L^2(\Omega)} \|\nabla \Delta m\|^\frac{1}{2}_{L^2(\Omega)}.\label{ES9} 		
	\end{eqnarray}
	\end{Pro}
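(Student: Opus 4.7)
My approach combines the equivalent norm on $H^{k+2}(\Omega)$ supplied by Lemma \ref{EN} with Sobolev embeddings valid in dimensions $n\leq 3$ and a single interpolation inequality for the last estimate. The recurring technical device is that each of the left-hand sides in \eqref{ES2}--\eqref{ES9}, which involves derivatives of order at least $2$ or isolated gradients of $m$, is invariant under the shift $m\mapsto v:=m-\bar m$ (where $\bar m$ is the spatial mean of $m$), so we may reduce to the zero-mean case in which the Neumann Poincar\'e inequality closes the chain $\|v\|_{L^2(\Omega)}\leq C\|\nabla v\|_{L^2(\Omega)}\leq C\|\Delta v\|_{L^2(\Omega)}$, where the second step follows by integration by parts using $\partial_\eta v = 0$. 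Consequently Lemma \ref{EN} with $k=0$ upgrades to the master bound $\|v\|_{H^2(\Omega)}\leq C\|\Delta m\|_{L^2(\Omega)}$, and with $k=1$ to $\|v\|_{H^3(\Omega)}\leq C(\|\Delta m\|_{L^2(\Omega)}+\|\nabla\Delta m\|_{L^2(\Omega)})$.

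From these two master bounds the $H^2$-level estimates follow immediately. Inequality \eqref{ES2} follows from the embedding $H^2(\Omega)\hookrightarrow L^\infty(\Omega)$, valid for $n\leq 3$, together with the elementary bound $a+b\leq \sqrt 2\,(a^2+b^2)^{1/2}$; \eqref{ES4} is just the definition of the $H^2(\Omega)$ norm restricted to pure second-order derivatives, applied to $v$; and \eqref{ES3} follows by applying the Sobolev embedding $H^1(\Omega)\hookrightarrow L^s(\Omega)$, $s\in[1,6]$ for $n=3$ (and for every finite $s$ when $n=2$), to $\nabla v=\nabla m$. For the $H^3$-level estimates, \eqref{ES7} is the Poincar\'e inequality applied to $\Delta m$, which has zero spatial mean by the Neumann condition since $\int_\Omega\Delta m\,dx=\int_{\partial\Omega}\partial_\eta m\,ds=0$; \eqref{ES10} then follows by combining \eqref{ES7} with the master $H^3$ bound to absorb the $\|\Delta m\|_{L^2(\Omega)}$ term into $\|\nabla\Delta m\|_{L^2(\Omega)}$. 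The final interpolation estimate \eqref{ES9} is handled by the Gagliardo--Nirenberg inequality $\|f\|_{L^3(\Omega)}\leq C\|f\|_{L^2(\Omega)}^{1/2}\|f\|_{H^1(\Omega)}^{1/2}$ (valid for $n\leq 3$), applied componentwise to $f=D^2 m$, after which \eqref{ES4} and \eqref{ES10} combine to produce the stated product bound.

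The main obstacle, and the only genuinely delicate point, is the careful absorption of the $\|m\|_{L^2(\Omega)}$ lower-order terms generated by each application of Lemma \ref{EN}. The zero-mean reduction removes these cleanly, but relies crucially on the Neumann boundary condition, both through the identity $\int_\Omega\Delta m\,dx=0$ that makes Poincar\'e applicable to $\Delta m$, and through the integration-by-parts step with vanishing boundary contribution. A secondary technical point is the sharpness of the Gagliardo--Nirenberg exponents, which is tight in the critical case $n=3$ (forced by the borderline embedding $H^{1/2}(\Omega)\hookrightarrow L^3(\Omega)$) and automatic for $n=2$, so the symmetric $1/2,1/2$ split in \eqref{ES9} covers both dimensions uniformly.
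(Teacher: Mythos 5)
Your proof is correct, and it follows the standard route that the paper itself does not reproduce (it simply defers to Proposition 2.1 of \cite{SPSK}): the equivalent norm of Lemma \ref{EN}, the Poincar\'e--Wirtinger inequality combined with the Neumann integration by parts $\int_\Omega|\nabla v|^2\,dx=-\int_\Omega v\cdot\Delta v\,dx$, the Sobolev embeddings $H^2(\Omega)\hookrightarrow L^\infty(\Omega)$ and $H^1(\Omega)\hookrightarrow L^s(\Omega)$, $s\in[1,6]$, and the Gagliardo--Nirenberg interpolation $\|f\|_{L^3}\leq C\|f\|_{L^2}^{1/2}\|f\|_{H^1}^{1/2}$ for \eqref{ES9}. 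Two cosmetic points: the left-hand side of \eqref{ES2} is \emph{not} invariant under the shift $m\mapsto m-\bar m$, but this is harmless since you in fact prove \eqref{ES2} by applying Lemma \ref{EN} (with $k=0$) to $m$ itself before invoking $H^2\hookrightarrow L^\infty$, or equivalently by adding back $|\bar m|\leq C\|m\|_{L^2}$; and in \eqref{ES9} the $\|D^2m\|_{L^2}$ part of $\|D^2m\|_{H^1}$ also needs \eqref{ES7} (not just \eqref{ES4} and \eqref{ES10}) to be absorbed into $\|\nabla\Delta m\|_{L^2}$, which is immediate. Neither affects the validity of the argument.
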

	The proof of this proposition can be found in Proposition 2.1, \cite{SPSK}.
	
\begin{Lem}\label{PROP2}
Let $\Omega$ be a regular bounded domain of $\mathbb{R}^n$ for $n=2,3$. Then there exists a constant $C>0$ depending on $\Omega$ and $T$ such that 
\begin{enumerate}[(\roman*)]
%		\item for $p\in L^\infty(0,T;H^2(\Omega))$ and $q\in L^2(0,T;H^1(\Omega))$,
%		\begin{equation}\label{EE-1}
	%			\|p \times q\|^2_{L^2(0,T;H^1(\Omega))} \leq  C\ \|p\|^2_{L^\infty(0,T;H^2(\Omega))} \|q\|^2_{L^2(0,T;H^1(\Omega))},
	%		\end{equation}
\item  for $\xi \in L^\infty(0,T;H^2(\Omega))$ and $\zeta\in L^2(0,T;H^3(\Omega))$,
\begin{equation}\label{EE-2}
	\|\xi \times \Delta \zeta\|^2_{L^2(0,T;H^1(\Omega))}\leq  C\ \|\xi\|^2_{L^\infty(0,T;H^2(\Omega))}\  \|\zeta\|^2_{L^2(0,T;H^3(\Omega))},
\end{equation}
\item for $\xi\in L^\infty(0,T;H^2(\Omega))$ and $\omega \in L^2(0,T;H^1(\Omega))$,
\begin{equation}\label{EE-3}
	\|\xi \times \omega\|^2_{L^2(0,T;H^1(\Omega))} \leq  C\ \|\xi\|^2_{L^\infty(0,T;H^2(\Omega))}\  \|\omega\|^2_{L^2(0,T;H^1(\Omega))},
\end{equation}
\item  for $\xi,\zeta,\omega \in L^\infty(0,T;H^2(\Omega))$,
\begin{equation}\label{EE-6}
	\|(\xi\cdot \zeta)\ \omega\|^2_{L^2(0,T;H^1(\Omega))} \leq C\ \|\xi\|^2_{L^\infty(0,T;H^2(\Omega))}\ \|\zeta\|^2_{L^\infty(0,T;H^2(\Omega))}\ \|\omega\|^2_{L^2(0,T;H^2(\Omega))}.
\end{equation}
%		\item  for $p\in L^2(0,T;H^3(\Omega))$, $q\in L^\infty(0,T;H^2(\Omega))$ and $r\in L^\infty(0,T;H^2(\Omega))$,
%		\begin{equation}\label{EE-4}
	%			\|(\nabla p\cdot \nabla q)r\|^2_{L^2(0,T;H^1(\Omega))}\leq C \  \|p\|^2_{L^2(0,T;H^3(\Omega))} \|q\|^2_{L^\infty(0,T;H^2(\Omega))} \|r\|^2_{L^\infty(0,T;H^2(\Omega))},
	%		\end{equation} 
%		\item  for $p\in L^\infty(0,T;H^2(\Omega))$, $q\in L^\infty(0,T;H^2(\Omega))$ and $s\in L^2(0,T;H^1(\Omega))$,
%		\begin{equation}\label{EE-5}
	%			\|p \times (q \times s)\|^2_{L^2(0,T;H^1(\Omega))} \leq C \ \|p\|^2_{L^\infty(0,T;H^2(\Omega))} \|q\|^2_{L^\infty(0,T;H^2(\Omega))} \|s\|^2_{L^2(0,T;H^1(\Omega))} .
	%		\end{equation}
\end{enumerate}
\end{Lem}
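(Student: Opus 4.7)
\textbf{Proof proposal for Lemma \ref{PROP2}.} The plan for all three items is identical in spirit: expand $\|\cdot\|_{H^1(\Omega)}^2=\|\cdot\|_{L^2(\Omega)}^2+\|\nabla(\cdot)\|_{L^2(\Omega)}^2$, apply the Leibniz rule together with the pointwise bound $|a\times b|\le |a||b|$ from Lemma \ref{CPP}, distribute the factors via H\"older's inequality, and then invoke the Sobolev embeddings available on a bounded smooth $\Omega\subset\mathbb{R}^n$, $n=2,3$, namely $H^2(\Omega)\hookrightarrow L^\infty(\Omega)$, $H^2(\Omega)\hookrightarrow W^{1,p}(\Omega)$ for $p\le 6$, and $H^1(\Omega)\hookrightarrow L^6(\Omega)$. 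Finally, the $L^\infty(0,T;\cdot)$ factor in each estimate is pulled out of the time integral, leaving $L^2$-in-time norms that match the stated right-hand sides.

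For (i), I would use the identity $\nabla(\xi\times\Delta\zeta)=\nabla\xi\times\Delta\zeta+\xi\times\nabla\Delta\zeta$. The first piece is bounded pointwise in $t$ by $\|\nabla\xi\|_{L^3}\|\Delta\zeta\|_{L^6}\le C\|\xi\|_{H^2}\|\zeta\|_{H^3}$ (H\"older with $1/3+1/6=1/2$, together with $H^2\hookrightarrow W^{1,3}$ and $H^1\hookrightarrow L^6$ applied to $\Delta\zeta$, plus Lemma \ref{EN}), and the second by $\|\xi\|_{L^\infty}\|\nabla\Delta\zeta\|_{L^2}\le C\|\xi\|_{H^2}\|\zeta\|_{H^3}$. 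The zeroth-order piece $\|\xi\times\Delta\zeta\|_{L^2}\le \|\xi\|_{L^\infty}\|\Delta\zeta\|_{L^2}$ is analogous. Squaring, integrating in time, and extracting $\|\xi\|_{L^\infty(0,T;H^2)}^2$ produces (i).

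For (ii), the zeroth-order piece is handled exactly as in (i). For the gradient piece I would write $\nabla(\xi\times\omega)=\nabla\xi\times\omega+\xi\times\nabla\omega$; the second term yields $\|\xi\|_{L^\infty}\|\nabla\omega\|_{L^2}\le C\|\xi\|_{H^2}\|\omega\|_{H^1}$, while the first requires $\|\nabla\xi\|_{L^3}\|\omega\|_{L^6}\le C\|\xi\|_{H^2}\|\omega\|_{H^1}$, using $H^2\hookrightarrow W^{1,3}$ and $H^1\hookrightarrow L^6$. For (iii), I expand $\nabla[(\xi\cdot\zeta)\omega]=(\nabla\xi\cdot\zeta)\otimes\omega+(\xi\cdot\nabla\zeta)\otimes\omega+(\xi\cdot\zeta)\nabla\omega$; in every term I put the two factors built from $\xi$ and $\zeta$ in $L^\infty$ (using $H^2\hookrightarrow L^\infty$) and the factor carrying $\omega$ in $L^2$. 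This gives a pointwise-in-time bound by $C\|\xi\|_{H^2}\|\zeta\|_{H^2}\|\omega\|_{H^2}$, where the second derivative on $\omega$ enters through $\nabla\omega\in L^2$ in the last term; the zeroth-order piece uses only $\|\omega\|_{L^2}$. Squaring and integrating then gives (iii) with the factor $\|\omega\|_{L^2(0,T;H^2)}^2$.

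The proof presents no genuine obstacle; every step is a routine combination of Leibniz, H\"older, and Sobolev embeddings in dimensions $n\le 3$. The only point requiring minor care is the term $\nabla\xi\times\omega$ in (ii): the low spatial regularity of $\omega$ forces the splitting $L^3\cdot L^6$ via $H^1\hookrightarrow L^6$, rather than the more wasteful $L^\infty\cdot L^2$ splitting, which is unavailable in three dimensions because $H^2\not\hookrightarrow W^{1,\infty}$. All constants depend only on $\Omega$ (and trivially on $T$, absorbed into $C$), as required.
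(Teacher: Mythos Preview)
Your approach is correct and coincides with the paper's: the paper does not spell out the argument but simply cites Lemma~2.3 of \cite{SPSK2} for (i) and (ii) and says (iii) is analogous, and the content of that cited proof is exactly the Leibniz--H\"older--Sobolev routine you describe.

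One small imprecision in your treatment of (iii): you write that in each of the three gradient terms you ``put the two factors built from $\xi$ and $\zeta$ in $L^\infty$''. For the terms $(\nabla\xi\cdot\zeta)\,\omega$ and $(\xi\cdot\nabla\zeta)\,\omega$ this is not available in $n=3$, since $H^2(\Omega)\not\hookrightarrow W^{1,\infty}(\Omega)$; you need instead the same $L^3\cdot L^6$ (or $L^6\cdot L^6\cdot L^6$) splitting you correctly used in (ii), e.g.\ $\|\nabla\xi\|_{L^6}\|\zeta\|_{L^6}\|\omega\|_{L^6}\le C\|\xi\|_{H^2}\|\zeta\|_{H^1}\|\omega\|_{H^1}$. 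This still lands well inside the stated bound $C\|\xi\|_{H^2}\|\zeta\|_{H^2}\|\omega\|_{H^2}$, so the conclusion is unaffected. (Incidentally, your parenthetical ``the second derivative on $\omega$ enters through $\nabla\omega\in L^2$'' is a slip: that is a first derivative, and in fact your argument yields the slightly sharper $\|\omega\|_{L^2(0,T;H^1)}$ on the right, which of course implies the stated estimate.)
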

The proofs of the first two estimates in Lemma \ref{PROP2} are given in Lemma 2.3 of \cite{SPSK2}. Moreover, the final estimate \eqref{EE-6} can be derived analogously.

\section{Existence of Optimal Control}\label{SEC-EOC}			
\begin{proof}[Proof of Theorem \ref{T-EOOC}]	
%	We prove this theorem using the direct method of calculus of variations.
	
	From Assumption \ref{ASS-T}, the existence of a control $u\in \mathcal{U}_{ad}$ and the corresponding minimal time $T^*(u)$ with non-negative cost functional implies that there exists a minimizing sequence $\{T^*(u_k),u_k\}\subseteq \mathcal{A}$ and an infimum value $\alpha$ such that 
	\begin{equation}\label{EOC-1}
		\inf_{(T^*(u),u)\in \mathcal{A}} \mathcal{J}(u)=\lim_{k\to \infty} \mathcal{J}(u_k)=\alpha <+\infty.
	\end{equation} 
	As $T^*(u_k)$ is a bounded sequence of real numbers, let us denote $\displaystyle \inf_k T^*(u_k)= \widetilde{T}$. So, there exists a subsequence (again represented as $u_k$) such that 
	\begin{equation}\label{EOC-2}
		\lim_{k\to \infty} T^*(u_k)= \widetilde{T}. 
	\end{equation}
	Since $\{u_k\}$ is a minimizing sequence for the cost functional $\mathcal{J}$, it  is bounded in $\mathcal{U}$. Therefore, we can extract a subsequence (again represented as $\{u_k\}$) such that $u_k \overset{w}{\rightharpoonup}\widetilde{u}$ weakly in $\mathcal{U}$ for some element $\widetilde{u}\in \mathcal{U}$. Moreover, as the set $\mathcal{U}_{ad}$ is a closed and convex set, it is weakly closed, that is, $\widetilde{u}\in \mathcal{U}_{ad}$. Furthermore, by virtue of estimate \eqref{SSEE2}, we find that $\{m_k\}$  is bounded  in $\mathcal{M}$. As a consequence, there exists a weak limit $\widetilde{m}\in\mathcal{M}$ such that $m_k \rightharpoonup \widetilde{m}$ weakly in $\mathcal{M}$. Finally, by using the Aubin–Lions–Simon compactness theorem, we conclude that the sequence $\{m_k\}$ is relatively compact in the space $C([0,T];H^1(\Omega)) \cap L^2(0,T;H^2(\Omega))$. As a result, there exist subsequences (again denoted as $\{ m_k\}$ and $\{u_k\}$ ) such that
	\begin{eqnarray} \left\{\begin{array}{ccccl}
			u_k &\overset{w}{\rightharpoonup} & \widetilde{u} \ &\mbox{weakly in}&  L^2(0,T;H^1(\Omega)),\\
			m_k &\overset{w}{\rightharpoonup} & \widetilde{m} \  &\mbox{weakly in}&  L^2(0,T;H^3(\Omega)),\\
			m_k &\overset{*}{\rightharpoonup} & \widetilde{m} \  &\mbox{weak$^*$ in}&  L^\infty(0,T;H^2(\Omega)),\\
			(m_k)_t &\overset{w}{\rightharpoonup} & \widetilde{m}_t \ &\mbox{weakly in}&  L^2(0,T;H^1(\Omega))\ \text{and}\\
			m_k &\overset{s}{\to} & \widetilde{m} \  &\mbox{strongly in}& C([0,T];H^1(\Omega))\cap L^2(0,T;H^2(\Omega))\ \ \ \ \mbox{as} \  \ k\to \infty. \label{P2}
		\end{array}\right.	
	\end{eqnarray}
	
	Therefore, implementing the convergences established in \eqref{P2}, we can show the convergences of the nonlinear terms $m_k\times \Delta m_k$ , $m_k \times u_k$, $\big(1+|m_k|^2\big)m_k$ to $\widetilde{m} \times \Delta \widetilde{m}$, $\widetilde{m} \times \widetilde{u}$ and $\big(1+|\widetilde{m}|^2\big)\widetilde{m}$ respectively. For more details on these convergences, one may refer to Lemma 4.1 of \cite{SPSK}. This leads us to conclude that  $\widetilde{m}$ is a regular solution of system \eqref{NLP}.

	Now, the main objective is to prove that, within the finite time interval $[0,T]$, the control $\widetilde{u}$ drives the state $\widetilde{m}$ to a distance $`\delta\,'$ close to the desired state $m_{\Omega}$ in the $L^2(\Omega)$ space, that is, $\|\widetilde{m}(\widetilde{T})-m_{\Omega}\|\leq \delta$. Note that the constant $\delta$ is coming from the definition of the cost functional \eqref{T-CF}.

	First, let us denote $T^*(u_k)=T^*_k$, then we can formulate
	$$\big\|\widetilde{m}(\widetilde{T}) -m_{\Omega}\big\|_{L^2(\Omega)} \leq \big\|\widetilde{m} (\widetilde{T})-\widetilde{m}(T^*_k)\big\|_{L^2(\Omega)}+\big\|\widetilde{m}(T^*_k)-m_k(T^*_k)\big\|_{L^2(\Omega)}+\big\|m_k(T_k^*)-m_{\Omega}\big\|_{L^2(\Omega)}$$
	Since $T^*_k\to \widetilde{T}$ and $\widetilde{m}\in C([0,T];L^2(\Omega))$, the first term on the right-hand side of the inequality converges to $0$ as $k \to \infty$. Similarly, the strong convergence of $m_k$ to $\widetilde{m}$ in $C(0,T;L^2(\Omega))$ ensures that the second term also converges to $0$. Furthermore, since $T^*_k$ is the optimal time for $u_k$, so as $k\to \infty$, we have $\|\widetilde{m}(\widetilde{T})-m_{\Omega}\|_{L^2(\Omega)}\leq \delta $. This guarantees that $\mathcal{J}(\widetilde{u})$ is well-defined. 
	
	Now, using the strong convergence of $T_k^*$ from equation \eqref{EOC-2}, we have $\displaystyle T^*(\widetilde{u})^2 = \lim_{k\to\infty} T^*(u_k)^2$. Also, from the weak sequential lower semi-continuity of $\{u_k\}$, we have
	$$\|\, \widetilde{u}\, \|_{W^{1,2}(0,T;H^1(\Omega),H^1(\Omega)^*)} \leq \liminf_{k \to \infty} \  \| u_k\|_{W^{1,2}(0,T;H^1(\Omega),H^1(\Omega)^*)}.$$
	Combining these two estimates and substituting \eqref{EOC-1}, we find that 
	\begin{equation}\label{EOC-3}
		\mathcal{J}(\widetilde{u})\leq \liminf_{k \to \infty} \mathcal{J}(u_k)=\alpha.
	\end{equation}
	Moreover, since $T^*(\widetilde{u})$ is well-defined and $\alpha$ represents the infimum of the functional $\mathcal{J}$ over $\mathcal{U}_{ad}$, we have
	\begin{equation}\label{EOC-4}
	\alpha =\inf_{u\in \mathcal{U}_{ad}} \mathcal{J}(u) \leq \mathcal{J}(\widetilde{u}).
	\end{equation}
	Therefore, combining \eqref{EOC-3} and \eqref{EOC-4}, we get $\mathcal{J}(\widetilde{u})=\inf_{u\in \mathcal{U}_{ad}} \mathcal{J}(u)$. The proof is thus established.
\end{proof}

% Next, we will investigate the Lipschitz continuity and Fr'echet differentiability of this operator.

\section{Linearized System and First-Order Optimality Condition}

\subsection{Linearized System}\label{SEC-LS}
Let $\widetilde{m}\in \mathcal{M}$ be the regular solution of \eqref{NLP} associated with the control  $\widetilde{u}$. Before going to the first-order and second-order optimality conditions, we will demonstrate the solvability of the following linearized system:
\begin{equation}\label{CLE}
	(L-LLG)\begin{cases}
		\begin{array}{l}
			\mathcal{L}_{\widetilde{u}}z=f \ \ \ \ \text{in}\ \Omega_T,\\
			\frac{\partial z}{\partial \eta}=0 \ \ \ \ \ \  \text{in}\ \partial \Omega_T, \ \ \ z(x,0)=z_0\ \ \text{in}\ \Omega,
		\end{array}
	\end{cases}	
\end{equation}
where the operator $\mathcal{L}_{\widetilde{u}}$ is defined as
\begin{equation}\label{CLO}
	\mathcal{L}_{\widetilde{u}}z:= z_t-\Delta z -z\times \Delta \widetilde{m} -\widetilde{m}\times \Delta z - z \times \widetilde{u}+2\ (\widetilde{m}\cdot z)\ \widetilde{m}+ \left(1+|\widetilde{m}|^2\right)z.	
\end{equation}
\begin{Lem}\label{L-SLS}
	For any function $f$ in $L^2(0,T;H^1(\Omega))$ and $z_0\in H^2(\Omega)$, there exists a unique regular solution $z \in  W^{1,2}(0,T;H^3(\Omega);H^1(\Omega))$ to the linearized system \eqref{CLE}. Moreover, the following estimate holds:
	\begin{align}\label{LSSE}
		\mathcal{B}(z_{\infty},z_2,{z_t}_2) \leq& \left(\|z_0\|^2_{L^2(\Omega)}+\|\Delta z_0\|^2_{L^2(\Omega)} +  \|f\|^2_{L^2(0,T;H^1(\Omega))}\right) \times \ \exp\bigg\{C\   \Big(1+\|\widetilde{m}\|^2_{L^\infty(0,T;H^2(\Omega))}\nonumber\\
		&+\|\widetilde{m}\|^2_{L^\infty(0,T;H^1(\Omega))}\|\widetilde{m}\|^2_{L^2(0,T;H^2(\Omega))}+\|\widetilde{m}\|^2_{L^2(0,T;H^3(\Omega))}+ \|\widetilde{u}\|^2_{L^2(0,T;H^1(\Omega))}\Big) \bigg\},
	\end{align}
	where $\mathcal{B}(z_{\infty},z_2,{z_t}_2):=\|z\|^2_{L^{\infty}(0,T;H^2(\Omega))}+ \|z\|^2_{L^2(0,T;H^3(\Omega))}+\|z_t\|^2_{L^2(0,T;H^1(\Omega))}$.
\end{Lem}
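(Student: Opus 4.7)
The plan is to establish existence via a Galerkin approximation built on the orthonormal basis of Neumann eigenfunctions $\{e_k\}$ of $-\Delta$, derive three successive a priori estimates at the $L^2$, $H^1$, and $H^2$ level in space, pass to the limit by weak compactness, read off the bound on $z_t$ directly from the equation, and obtain uniqueness by applying the same estimate to the difference of two solutions (this is legitimate since \eqref{CLE} is linear in $z$). The finite-dimensional approximation $z^N=\sum_{k=1}^N c_k^N(t)e_k$ satisfies a linear ODE system with coefficients coming from $\widetilde{m}$ and $\widetilde{u}$ that lie in the correct Bochner spaces by Theorem \ref{T-RS}, so local solvability is immediate and global existence will follow from the a priori bounds.

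For the first-level estimate I would test $\mathcal{L}_{\widetilde u}z^N=f$ against $z^N$; after using $\widetilde m\times\Delta z^N\cdot z^N$ cancelling modulo an integration by parts and bounding the cross and cubic terms by Hölder and Lemma \ref{CPP}, Gronwall gives control of $\|z^N\|_{L^\infty(L^2)}^2+\|\nabla z^N\|_{L^2(L^2)}^2$. Testing next against $-\Delta z^N$ exploits the key pointwise identity $(\widetilde m\times\Delta z^N)\cdot\Delta z^N=0$, so the only surviving terms on the right are $z^N\times\Delta\widetilde m$, $z^N\times\widetilde u$, the cubic contributions $(\widetilde m\cdot z^N)\widetilde m$ and $|\widetilde m|^2 z^N$, and the forcing $f$. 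Each of these is handled by \eqref{ES2}--\eqref{ES4}, Hölder, and Young with a small parameter to absorb $\|\Delta z^N\|_{L^2}^2$ into the dissipation, yielding a bound on $\|z^N\|_{L^\infty(H^1)}^2+\|\Delta z^N\|_{L^2(L^2)}^2$ via Gronwall.

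The crux is the third-level estimate. I would test against $\Delta^2 z^N$ (i.e., project onto $\mathrm{span}\{e_1,\dots,e_N\}$ using $\sum\lambda_k^2 c_k^N e_k$), which produces $\tfrac12\tfrac{d}{dt}\|\Delta z^N\|_{L^2}^2+\|\nabla\Delta z^N\|_{L^2}^2$ on the left. The decisive cancellation is
\[
\int_\Omega \nabla(\widetilde m\times\Delta z^N)\cdot\nabla\Delta z^N\,dx=\int_\Omega(\nabla\widetilde m\times\Delta z^N)\cdot\nabla\Delta z^N\,dx,
\]
because $\widetilde m\times\nabla\Delta z^N$ is perpendicular to $\nabla\Delta z^N$; this term is then bounded by $\|\nabla\widetilde m\|_{L^3}\|\Delta z^N\|_{L^6}\|\nabla\Delta z^N\|_{L^2}$ and absorbed via \eqref{ES3} and the equivalent norm from Lemma \ref{EN}. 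The remaining terms $z^N\times\Delta\widetilde m$, $z^N\times\widetilde u$, $(\widetilde m\cdot z^N)\widetilde m$, $|\widetilde m|^2 z^N$, and $f$ are estimated in the $H^1$-pairing against $\nabla\Delta z^N$ (viewed as controlling the $H^3$ norm of $z^N$) using \eqref{EE-2}--\eqref{EE-6} from Lemma \ref{PROP2}; each produces a product involving a norm of $\widetilde m$ or $\widetilde u$ that fits exactly the exponential in \eqref{LSSE} after Gronwall's inequality. The bound on $\|z^N_t\|_{L^2(H^1)}$ is then obtained by isolating $z^N_t$ in the equation and applying the same tools term by term.

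Weak-$\ast$ limits in $L^\infty(H^2)$ and weak limits in $L^2(H^3)$ together with an Aubin--Lions type compactness pass to a limit $z\in\mathcal M$ solving \eqref{CLE}, while uniqueness is the already-derived estimate applied to $z_1-z_2$ with $f=0$, $z_0=0$. The principal obstacle is the $L^2(H^3)$ estimate: one must arrange the many multiplicative terms so every nonlinear contribution is either a multiple of $\|\nabla\Delta z^N\|_{L^2}^2$ (absorbable) or a product controllable by a coefficient integrable in $t$ whose integrated norm reproduces exactly the quantity inside the exponential in \eqref{LSSE}. Getting the dependence on $\|\widetilde m\|_{L^\infty(H^1)}^2\|\widetilde m\|_{L^2(H^2)}^2$ (rather than a stronger combination) requires the Hölder--Gagliardo--Nirenberg balance provided by \eqref{EE-6} and the equivalent norm \eqref{EE-2}, which is where Lemma \ref{PROP2} does the heavy lifting.
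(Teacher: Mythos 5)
Your overall strategy (Galerkin on the Neumann eigenbasis, energy estimates at the $L^2$, $H^1$ and $H^2(\Delta)$ levels exploiting the cross-product cancellations, Gronwall, weak/weak-$\ast$ compactness, reading $z_t$ off the equation, uniqueness by linearity) is exactly the template the paper relies on: it defers this lemma to Lemma 4.1 of \cite{SPSK3}, and the same testing scheme (against $\zeta_n$ and $\Delta^2\zeta_n$) is what the paper itself carries out in Lemma \ref{L-SER}. The cancellations you identify, in particular $\bigl(\widetilde m\times\nabla\Delta z^N\bigr)\cdot\nabla\Delta z^N=0$ at the top level, are the right ones, and the $z_t$ bound via Lemma \ref{PROP2} is fine.

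There is, however, one step that fails as written, and it is precisely at the term you call decisive. You bound the surviving commutator term by $\|\nabla\widetilde m\|_{L^3}\,\|\Delta z^N\|_{L^6}\,\|\nabla\Delta z^N\|_{L^2}$. Since $\|\Delta z^N\|_{L^6}\le C\bigl(\|\Delta z^N\|_{L^2}+\|\nabla\Delta z^N\|_{L^2}\bigr)$, this produces a contribution of size $\|\nabla\widetilde m(t)\|_{L^3}\,\|\nabla\Delta z^N\|^2_{L^2}$, and $\|\nabla\widetilde m(t)\|_{L^3}$ is bounded but not small, so it cannot be absorbed into the dissipation with a fixed $\epsilon$; you would need a smallness assumption on $\widetilde m$ that the lemma does not make. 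The standard repair is to distribute the exponents the other way: $\|\nabla\widetilde m\|_{L^6}\,\|\Delta z^N\|_{L^3}\,\|\nabla\Delta z^N\|_{L^2}$, then use the interpolation \eqref{ES9}, $\|\Delta z^N\|_{L^3}\le C\|\Delta z^N\|^{1/2}_{L^2}\|\nabla\Delta z^N\|^{1/2}_{L^2}$, and Young's inequality to get $\epsilon\,\|\nabla\Delta z^N\|^2_{L^2}+C\,\|\widetilde m\|^4_{H^2(\Omega)}\,\|\Delta z^N\|^2_{L^2}$, whose coefficient is in $L^1(0,T)$ because $\widetilde m\in L^\infty(0,T;H^2(\Omega))$, so Gronwall closes the $H^3$ estimate without any smallness. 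With this correction (and the analogous care in balancing the terms $z^N\times\Delta\widetilde m$ and $(\widetilde m\cdot z^N)\widetilde m$ so that the coefficients integrate to the quantities appearing in the exponential of \eqref{LSSE}), your argument goes through and coincides with the paper's.
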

\noindent For the proof of this lemma, one may refer to Lemma 4.1 of \cite{SPSK3}.

Let $\varphi=(\varphi_1,\varphi_2,\varphi_3,\varphi_4,\varphi_5,\varphi_6,\varphi_7,\varphi_8)$, $\psi=(\psi_1,\psi_2)$, $h$, $\vartheta=(\vartheta_1,\vartheta_2,\vartheta_3,\vartheta_4)$ and $\upsilon=(\upsilon_1,\upsilon_2)$ are functions of $(x,t)$  and $\rho$ is any positive constant. 
Consider another auxiliary system given by 
%Then, we consider a simplified version of the system \eqref{ER-SY} as follows: 
\begin{equation}\label{ER-SF}
	\begin{cases}
		\begin{array}{l}
			\zeta_t-\Delta \zeta =\mathscr{F}_1(\rho,\varphi)  + \mathscr{F}_2(\rho,\psi,\zeta) + \mathscr{F}_3(\rho,h,\zeta) + \mathscr{F}_4(\rho,\vartheta,\zeta) + \mathscr{F}_5(\rho,\upsilon,\zeta)+ \mathscr{F}_6(\rho,\zeta), \\

			\frac{\partial \zeta}{\partial \eta}=0 \ \ \ \ \  \text{in}\ \partial \Omega_T, \ \ \ \ \ \ \  \ \ \zeta(x,0)=0\ \ \text{in}\ \Omega,
		\end{array}
	\end{cases}	
\end{equation}
where
\begin{align*}
\mathscr{F}_1(\rho,\varphi)&=\rho \ \varphi_1 + \rho \ \varphi_2 \times \Delta \varphi_3  + \rho \ \varphi_4 \times \varphi_5 +\rho \ (\varphi_6\cdot \varphi_7)\,\varphi_8,  \\
\mathscr{F}_2(\rho,\psi,\zeta)&= \rho\ \psi_1 \times \Delta \zeta + \rho \ \zeta \times \Delta \psi_2 + \rho \ \zeta \times \Delta \zeta,\\
\mathscr{F}_3(\rho,h,\zeta) &= \rho \ \zeta \times h ,\\
\mathscr{F}_4(\rho,\vartheta,\zeta) &= \rho \ (\zeta \cdot \vartheta_1) \,\vartheta_2 + \rho\ (\vartheta_3 \cdot \vartheta_4)\,\zeta   ,\\
\mathscr{F}_5(\rho,\upsilon,\zeta) &= \rho \ |\zeta|^2\, \upsilon_1 +\rho \ (\zeta \cdot \upsilon_2) \, \zeta , \\
\mathscr{F}_6(\rho,\zeta) &= \rho \ |\zeta|^2\,\zeta.   
\end{align*}

The following lemma plays a crucial role in the proof of first- and second-order optimality conditions.

	\begin{Lem}\label{L-SER}
		Suppose the coefficient functions $\varphi_1,\varphi_5,h \in L^2(0,T;H^1(\Omega))$, $\upsilon_1,\upsilon_2\in L^4(0,T;H^1(\Omega))$, $\varphi_6,\vartheta_1,\vartheta_3\in L^{\infty}(0,T;H^1(\Omega))$,  $\varphi_8,\vartheta_2,\vartheta_4\in L^2(0,T;H^2(\Omega))$, $\varphi_2,\varphi_4,\varphi_7  \in L^{\infty}(0,T;H^2(\Omega))$, $\varphi_3,\psi_2 \in L^2(0,T;H^3(\Omega))$, and $\psi_1 \in L^{\infty}(0,T;H^2(\Omega))\cap L^2(0,T;H^3(\Omega)) $.
	For any $0<\sigma<1$, there exists a constant $\rho_0$ such that system \eqref{ER-SF} has a unique regular solution $\zeta\in \mathcal{M}$ corresponding to every $\rho\in [0,\rho_0]$, and the solution satisfies the following estimate: 
	\begin{equation}\label{SE-ZETA}
		\|\zeta\|^2_{L^{\infty}(0,T;H^2(\Omega))} + \|\zeta\|^2_{L^2(0,T;H^3(\Omega))} + \|\zeta_t\|^2_{L^2(0,T;H^1(\Omega))} \leq \mathscr{C}_{mod}(\Omega,T,\rho, \sigma,\varphi,\psi,h,\vartheta,\upsilon),
	\end{equation}

	where
	\begin{align*}
	&\mathscr{C}_{mod}(\Omega,T,\rho, \sigma,\varphi,\psi,h,\vartheta,\upsilon) := \frac{\rho^2}{\sigma^2}\, \Big( \sigma + \|\varphi_1\|^2_{L^2(0,T;H^1(\Omega))} +\|\varphi_2\|^2_{L^\infty(0,T;H^2(\Omega))}\|\varphi_3\|^2_{L^2(0,T;H^3(\Omega))}\\
	&+\|\varphi_4\|^2_{L^{\infty}(0,T;H^2(\Omega))} \ \|\varphi_5\|^2_{L^2(0,T;H^1(\Omega))} +\|\varphi_6\|^2_{L^\infty(0,T;H^1(\Omega))}\ \|\varphi_7\|^2_{L^\infty(0,T;H^2(\Omega))}\ \|\varphi_8\|^2_{L^2(0,T;H^2(\Omega))}\Big)^3\\
	&\times \exp\left\{C(\Omega) \ \Big ( T+  \rho^2\, \Big(\|\psi_1\|^2_{L^\infty(0,T;H^2(\Omega))}+ \|\psi_1\|^2_{L^2(0,T;H^3(\Omega))}\!+\!\|\psi_2\|^2_{L^2(0,T;H^3(\Omega))} \!+\!\|\vartheta_1\|^2_{L^\infty(0,T;H^1(\Omega))}\|\vartheta_2\|^2_{L^2(0,T;H^2(\Omega))}\Big.\right. \nonumber\\
	&\left.\left.	+ \|\vartheta_3\|^2_{L^\infty(0,T;H^1(\Omega))}\|\vartheta_4\|^2_{L^2(0,T;H^2(\Omega))}+\|h\|^2_{L^2(0,T;H^1(\Omega))}+\ \|\upsilon_1\|^4_{L^4(0,T;H^1(\Omega))} +\|\upsilon_2\|^4_{L^4(0,T;H^1(\Omega))}  \right)\Big)\right\},
	\end{align*}
 Moreover, the solution $\zeta $ converges to $0$ in $\mathcal{M}$ as $\rho \to 0^+$.
\end{Lem}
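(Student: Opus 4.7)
The plan is to prove existence and uniqueness by a Banach fixed-point argument in the space $\mathcal{M}$, using Lemma~\ref{L-SLS} (applied with the coefficients $\rho\psi_1$, $\rho\psi_2$, $\rho h$, $\rho\vartheta_i$ in place of $\widetilde m,\widetilde u$) as the linear workhorse. Concretely, I would split the right-hand side of \eqref{ER-SF} into two parts: the terms that are linear in $\zeta$ with coefficients coming from $\psi_1,\Delta\psi_2,h,\vartheta_i$ are absorbed into the left-hand side to form an operator of the same shape as $\mathcal{L}_{\widetilde u}$ in \eqref{CLO}, while the remaining contributions — the affine forcing $\mathscr{F}_1(\rho,\varphi)$, the top-order self-interaction $\rho\,\zeta\times\Delta\zeta$ hidden in $\mathscr{F}_2$, and the quadratic/cubic terms $\mathscr{F}_5,\mathscr{F}_6$ — are frozen at a previous iterate $\zeta^*$. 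Applying Lemma~\ref{L-SLS} to the resulting linear problem then produces precisely the exponential factor of $\mathscr{C}_{mod}$ in \eqref{SE-ZETA}, with the $\rho^2$ multipliers explaining why the norms of $\psi_i,\vartheta_i,h,\upsilon_i$ appear under a $\rho^2$ weight inside the exponent.

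Next I would show that the map $\Phi:\zeta^*\mapsto\zeta$ stabilizes a closed ball $B_R\subset\mathcal{M}$ of appropriately chosen radius $R=R(\rho,\sigma)$ and is a contraction on it. The required product bounds come from Lemma~\ref{PROP2}: inequality \eqref{EE-2} controls the top-order self-interaction as $\|\zeta^*\times\Delta\zeta^*\|_{L^2(0,T;H^1(\Omega))}\lesssim \|\zeta^*\|_{L^\infty(0,T;H^2)}\|\zeta^*\|_{L^2(0,T;H^3)}$, and inequality \eqref{EE-6} controls the cubic $|\zeta^*|^2\zeta^*$ and the $\upsilon$-coupled quadratic terms $|\zeta^*|^2\upsilon_1$, $(\zeta^*\cdot\upsilon_2)\zeta^*$, with a H\"older-in-time split matched to the regularity $\upsilon_i\in L^4(0,T;H^1(\Omega))$ that accounts for the quartic power $\|\upsilon_i\|^4_{L^4(0,T;H^1(\Omega))}$ inside the exponential. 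Combining with the linear bound of Lemma~\ref{L-SLS}, the self-mapping inequality reduces schematically to $\rho^2\,(\mathrm{data})^3 + C\rho(R^2+R^3)^2 \leq R^2$, which is solvable with $R^2\sim \rho^2\sigma^{-2}(\sigma+\mathrm{data})^3$ once $\rho\le\rho_0$ is small — exactly the prefactor structure of $\mathscr{C}_{mod}$. For contraction, I would subtract the equations for $\Phi(\zeta^*_1)$ and $\Phi(\zeta^*_2)$, expand every nonlinear difference using the standard bilinear identity $a_1\times\Delta a_1 - a_2\times\Delta a_2 = (a_1-a_2)\times\Delta a_1 + a_2\times\Delta(a_1-a_2)$ and its trilinear analogue, and re-apply the same product bounds to get a Lipschitz constant of order $C\rho(R+R^2)$ times the square root of the exponential factor, which is strictly less than $1$ for $\rho$ small. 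The convergence $\zeta\to 0$ in $\mathcal{M}$ as $\rho\to 0^+$ then follows immediately because the fixed point lies in $B_R$ with $R=\mathcal{O}(\rho)$.

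The main obstacle is the top-order self-interaction $\zeta\times\Delta\zeta$: unlike the lower-order nonlinearities, its $L^2(0,T;H^1)$-bound already consumes the \emph{full} $\mathcal{M}$-norm of $\zeta$, so no mere shrinking of $R$ can absorb it into the linear part — only the prefactor $\rho$ tames it, and the required smallness of $\rho$ is ultimately forced by this single term. A related bookkeeping challenge is the cubic contribution $|\zeta|^2\zeta$, whose bound is trilinear in $\|\zeta\|_{L^\infty(0,T;H^2)}$ and dictates the $(\sigma+\mathrm{data})^3$ cube appearing in $\mathscr{C}_{mod}$; tracking this cube, the quartic exponent on $\upsilon$, and the uniform $\sigma^{-2}$ weight simultaneously through the fixed-point inequality is the principal technical obstruction of the argument.
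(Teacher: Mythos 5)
Your route is genuinely different from the paper's. The paper does not use a fixed-point iteration at all: it runs a Faedo--Galerkin scheme with the Neumann eigenfunctions of $-\Delta+I$, derives two energy identities (testing with $\zeta_n$ and with $\Delta^2\zeta_n$), and then propagates the uniform bound by a continuation argument (a maximal time $T_{max}$ and a contradiction), where the smallness of $\rho$ is used to keep the coefficient $1-C^*\rho^2\|\zeta_n\|^4_{H^2}-C^*\rho^4\|\Delta\zeta_n\|^4_{L^2}$ above $\tfrac12$; existence then follows by Aubin--Lions compactness and weak lower semicontinuity, and the $\sigma^{-2}(\sigma+\mathrm{data})^3$ prefactor of $\mathscr{C}_{mod}$ is produced in the separate $\zeta_t$ estimate via the elementary inequality $(\sigma+\phi^*)\le\sigma^{-2}(\sigma+\phi^*)^3$. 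Your Banach fixed point on a ball of radius $R=\mathcal{O}(\rho)$ in $\mathcal{M}$ is a legitimate alternative and, if carried out, buys uniqueness and the convergence $\zeta\to0$ as $\rho\to0^+$ essentially for free (the paper only asserts uniqueness ``by classical methods''), at the price of having to verify self-mapping and Lipschitz estimates for every nonlinear term; the required product bounds are the same ones the paper uses inside its Galerkin estimates.

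There is, however, one step that does not work as written: you cannot apply Lemma \ref{L-SLS} ``with the coefficients $\rho\psi_1,\rho\psi_2,\rho h,\rho\vartheta_i$ in place of $\widetilde m,\widetilde u$.'' That lemma is stated for the specific operator \eqref{CLO}, in which a \emph{single} function $\widetilde m\in\mathcal{M}$ occupies every coefficient slot; in \eqref{ER-SF} the slots are filled by distinct functions, and the zeroth-order coefficients $\vartheta_1,\vartheta_3\in L^\infty(0,T;H^1(\Omega))$, $\vartheta_2,\vartheta_4\in L^2(0,T;H^2(\Omega))$ have strictly less regularity than the $\mathcal{M}$-regularity Lemma \ref{L-SLS} assumes (and the sign structure of the $(\widetilde m\cdot z)\widetilde m$ and $(1+|\widetilde m|^2)z$ terms is also lost). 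So you would first have to prove a generalized linear solvability lemma for this variable-coefficient operator -- whose proof consists of exactly the $L^2$ and $\nabla\Delta$ energy estimates the paper performs directly at the Galerkin level -- before the contraction scheme can even start; invoking Lemma \ref{L-SLS} verbatim is a gap, and its estimate would in any case not hand you the precise $\mathscr{C}_{mod}$ of \eqref{SE-ZETA} without redoing the $\zeta_t$ bookkeeping. A smaller caveat of the same kind: \eqref{EE-6} requires all its factors in $L^\infty(0,T;H^2(\Omega))$ (resp.\ $L^2(0,T;H^2(\Omega))$), so it does not cover $|\zeta|^2\upsilon_1$ and $(\zeta\cdot\upsilon_2)\zeta$ with $\upsilon_i\in L^4(0,T;H^1(\Omega))$; your ``H\"older-in-time split'' remark points in the right direction, but that estimate has to be written out (as the paper does), since it is what generates the quartic norms $\|\upsilon_i\|^4_{L^4(0,T;H^1(\Omega))}$ in the exponential.
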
 
\begin{proof}[Proof of Lemma \ref{L-SER}]
	
	Let $w_j$ represent the $j^{th}$ eigenfunction associated with the eigenvalue $\lambda_j$ of the operator $-\Delta + I$ under the Neumann boundary condition, where $I$ is the identity operator. Specifically, this means $(-\Delta +I) w_j=\lambda_j w_j$ with $\frac{\partial w_j}{\partial \eta}\big|_{\partial \Omega}=0$, and the set $\{w_j\}^\infty_{j=1}$ forms an orthonormal basis for $L^2(\Omega)$ space. Define $W_n:= span \{w_1,w_2,...,w_n\}$ as a finite-dimensional subspace of $L^2(\Omega)$ and let $\mathbb{P}_n:L^2(\Omega)\to W_n$ denote the orthogonal projection. Now, we consider the Galerkin approximation of the system:
		\begin{equation}\label{GA-SS}
		\begin{cases}
			(\zeta_n)_t- \Delta \zeta_n=\mathbb{P}_n \left[\mathscr{F}_1(\rho,\varphi)  + \mathscr{F}_2(\rho,\psi,\zeta_n) + \mathscr{F}_3(\rho,h,\zeta_n) + \mathscr{F}_4(\rho,\vartheta,\zeta_n) + \mathscr{F}_5(\rho,\upsilon,\zeta_n)+ \mathscr{F}_6(\rho,\zeta_n) \right],\\
			\zeta_n(0)=0,
		\end{cases}
	\end{equation}
	where $\zeta_n(t)=\sum_{k=1}^{n}a_{kn}(t)\ w_k\in W_n$. Note that for each fixed $t,$ the term $a_{kn}(t)$ is a vector in $\mathbb{R}^3$. The product in $\zeta_n(t)$ is a standard scalar-vector product, that is, if we represent the canonical basis of $\mathbb{R}^3$ as $\{e_j\}^3_{j=1}$, then the term $\zeta_n(t)$ is given by $$\zeta_n(x,t)
	= \sum_{k=1}^n \sum_{j=1}^3 a_{kn}^{(j)}(t)\,w_k(x)\,e_j.$$
		
 The equation \eqref{GA-SS} can be reduced to a system of ordinary differential equations (ODEs):
	\begin{equation}\label{ODE}  
		\frac{d}{dt}a_{kn}(t)= F_k(t,a_{n}), \ \ a_{k}(0)=0, \ \ \ \ \ k=1,2,...,n,
	\end{equation}
	where $a_{n}=(a_{1n}(t),a_{2n}(t),...,a_{nn}(t))^T$ and 
	\begin{align*}
		F_k(t,a_{n}) = -(\lambda_k&-1)\ a_{kn}(t) +  \rho \int_{\Omega}\big(\varphi_1 +  \varphi_2 \times \Delta \varphi_3  + \varphi_4 \times \varphi_5 + (\varphi_6\cdot \varphi_7)\,\varphi_8\big) \, w_k\ dx\\
		&-\rho \sum_{r=1}^n(\lambda_r-1)  \int_{\Omega} \big(\psi_1 \times a_{rn}(t) \big)\ w_r\,w_k \ dx +\rho \sum_{r=1}^n  \int_{\Omega} \big(a_{rn}(t) \times \Delta \psi_2 \big)\ w_r\,w_k \ dx\\
		&- \rho\ \sum_{r,s=1}^{n} (\lambda_s-1) \big(a_{rn}(t)\times a_{sn}(t)\big)\int_\Omega w_r\,  w_s\,  w_k\ dx +\rho\ \sum_{r=1}^n \int_{\Omega} \big(a_{rn}(t)\times h\big)\ w_r\ w_k\ dx\\ 
		&+\rho\ \sum_{r=1}^{n} \int_{\Omega} \big(a_{rn}(t) \cdot \vartheta_1 \big)\,\vartheta_2  \ w_r \, w_k\ dx+\rho\  \sum_{r=1}^{n} \int_{\Omega} \big(\vartheta_3 \cdot \vartheta_4 \big)\,a_{rn}(t)  \ w_r \, w_k\ dx\\
		& +\rho \sum_{r,s=1}^{n} \int_\Omega \big(a_{rn}(t)\cdot a_{sn}(t)\big)\,\upsilon_1\, w_r\, w_s\, w_k\ dx+\rho \sum_{r,s=1}^{n} \int_\Omega \big(a_{rn}(t)\cdot \upsilon_2 \big)\,a_{sn}(t)\, w_r\, w_s\, w_k\ dx \\
		& + \rho \sum_{r,s,v=1}^{n} \Big(\big(a_{rn}(t) \cdot a_{sn}(t)\big)\ a_{vn}(t)\Big)\int_\Omega  w_r \ w_s \ w_v\ w_k\ dx.	
	\end{align*}	
	
	Let us assume that $\varphi_i, \psi_i, h, \vartheta_i$ and $\upsilon_i$ all belong to $C([0,T];L^2(\Omega))$, with the additional regularity assumption $\varphi_3,\psi_2\in C([0,T];H^2(\Omega))$. Then the function $F_k(t,a_n)$ is continuous on $[0,T]\times \mathbb{R}^n$. Therefore, by the existence theory for ODEs (see \cite{PH}), there exists a solution  $a_n \in C^1([0,t_m);\mathbb{R}^n)$ to system \eqref{GA-SS}, where $t_m\in (0,T]$ is the maximal existence time. We will then establish, using an \emph{a priori} estimate, that this maximal time $t_m$ is indeed equal to $T$. Finally, using the density argument, we can extend the existence result to hold for every function   $\varphi_i, \psi_i, h, \vartheta_i$ and $\upsilon_i$ in their respective functional spaces as specified in the statement of the lemma.
	
	Now, taking the $L^2(\Omega)$ inner product of \eqref{GA-SS} with $\zeta_n$, we have
	\begin{align}\label{ES-E1}
		\frac{1}{2} \frac{d}{dt} &\|\zeta_n(t)\|^2_{L^2(\Omega)} + \|\nabla \zeta_n(t)\|^2_{L^2(\Omega)}=\sum_{k=1}^6 \int_{\Omega} \mathscr{F}_k\cdot \zeta_n(t)\ dx.
	\end{align} 
	Next, we will individually estimate each term on the right-hand side. For the first term applying H\"older's inequality, followed by the embedding $H^1(\Omega) \hookrightarrow L^p(\Omega)$ for $p\in [2,6]$, and the norm estimate \eqref{ES2}, we obtain
	\begin{align*}
		&\left | \int_{\Omega} \mathscr{F}_1(\rho,\varphi)\cdot \zeta_n(t)\ dx\ \right |\leq \rho\ \Big( \|\varphi_1(t)\|_{L^1(\Omega)}+\|\varphi_2(t)\|_{L^2(\Omega)}\,\|\Delta \varphi_3(t)\|_{L^2(\Omega)}&\\
		&\hspace{1.5cm}+\|\varphi_4(t)\|_{L^2(\Omega)}\,\|\varphi_5(t)\|_{L^2(\Omega)} +\|\varphi_6(t)\|_{L^3(\Omega)}\,\|\varphi_7(t)\|_{L^3(\Omega)}\,\|\varphi_8(t)\|_{L^3(\Omega)}\Big)\, \|\zeta_n(t)\|_{L^{\infty}(\Omega)}\\
		&\hspace{1cm}\leq \rho^2\ C_1(\Omega)\ \Big(   \|\varphi_1(t)\|^2_{L^1(\Omega)}+\|\varphi_2(t)\|^2_{L^2(\Omega)}\,\|\varphi_3(t)\|^2_{H^2(\Omega)}+\|\varphi_4(t)\|^2_{L^2(\Omega)}\,\|\varphi_5(t)\|^2_{L^2(\Omega)}&\\
		&\hspace{1.5cm} +\|\varphi_6(t)\|^2_{H^1(\Omega)}\,\|\varphi_7(t)\|^2_{H^1(\Omega)}\,\|\varphi_8(t)\|^2_{H^1(\Omega)}\Big)+ \|\zeta_n(t)\|^2_{H^2(\Omega)}.
	\end{align*}
	Similarly, applying the cross product property $\zeta_n\cdot (\zeta_n \times h)=0$ and $\zeta_n\cdot (\zeta_n \times \Delta \zeta_n)=0$ for the second and third terms on the right-hand side of \eqref{ES-E1}, we left with
	\begin{align*}
		&\left | \int_{\Omega} \mathscr{F}_2(\rho,\psi,\zeta_n)\cdot \zeta_n(t)\ dx+\int_{\Omega} \mathscr{F}_3(\rho,h,\zeta_n)\cdot \zeta_n(t)\ dx \ \right |\leq \rho\ \|\psi_1(t)\|_{L^4(\Omega)}\,\|\Delta \zeta_n(t)\|_{L^2(\Omega)}\, \|\zeta_n(t)\|_{L^4(\Omega)}&\\
		&\hspace{1.5cm}\leq \rho^2\ C_2(\Omega)\ \|\psi_1(t)\|^2_{H^1(\Omega)}\, \|\zeta_n(t)\|^2_{H^1(\Omega)}+\|\zeta_n(t)\|^2_{H^2(\Omega)}.			
	\end{align*}
Finally, for the last three terms applying H\"older's inequality and the embedding $H^1(\Omega) \hookrightarrow L^4(\Omega)$, we obtain
\begin{align*}
&\left | \int_{\Omega} \mathscr{F}_4(\rho,\vartheta,\zeta_n)\cdot \zeta_n(t)\ dx \right |\leq \rho\ \left( \|\vartheta_1(t)\|_{L^4(\Omega)}\ \|\vartheta_2(t)\|_{L^4(\Omega)}+\|\vartheta_3(t)\|_{L^4(\Omega)}\ \|\vartheta_4(t)\|_{L^4(\Omega)}\right) \|\zeta_n(t)\|^2_{L^4(\Omega)}&\\
&\hspace{1.5cm}\leq \rho^2\,C_3(\Omega)\, \left( \|\vartheta_1(t)\|^2_{H^1(\Omega)}\ \|\vartheta_2(t)\|^2_{H^1(\Omega)}+\|\vartheta_3(t)\|^2_{H^1(\Omega)}\ \|\vartheta_4(t)\|^2_{H^1(\Omega)}\right) \|\zeta_n(t)\|^2_{H^1(\Omega)}+\|\zeta_n(t)\|^2_{H^1(\Omega)},	
\end{align*}
and
\begin{align*}
&\left | \int_{\Omega} \mathscr{F}_5(\rho,\upsilon,\zeta_n)\cdot \zeta_n(t)\ dx + \int_{\Omega} \mathscr{F}_6(\rho,\zeta_n)\cdot \zeta_n(t)\ dx\right|&\\
&\hspace{1.5cm} \leq \rho\ \left( \|\upsilon_1(t)\|_{L^4(\Omega)}+\|\upsilon_2(t)\|_{L^4(\Omega)} +\|\zeta_n(t)\|_{L^4(\Omega)}\right) \|\zeta_n(t)\|^3_{L^4(\Omega)}&\\
&\hspace{1.5cm}\leq \rho^2\, C_4(\Omega)\, \left( \|\upsilon_1(t)\|^4_{H^1(\Omega)}+ \|\upsilon_2(t)\|^4_{H^1(\Omega)}\right) \|\zeta_n(t)\|^2_{H^1(\Omega)}+\|\zeta_n(t)\|^2_{H^1(\Omega)}+\rho^2\,\|\zeta_n(t)\|^6_{H^1(\Omega)}.		
\end{align*}
Therefore, substituting all these estimates in equation \eqref{ES-E1}, we find
\begin{align}\label{ES-E2}
\frac{1}{2} \frac{d}{dt} &\|\zeta_n(t)\|^2_{L^2(\Omega)} + \|\nabla \zeta_n(t)\|^2_{L^2(\Omega)}\leq  \rho^2\ C(\Omega)\ \Big(   \|\varphi_1(t)\|^2_{L^1(\Omega)}+\|\varphi_2(t)\|^2_{L^2(\Omega)}\,\|\varphi_3(t)\|^2_{H^2(\Omega)}&\nonumber\\
&\hspace{0.7cm}+\|\varphi_4(t)\|^2_{L^2(\Omega)}\,\|\varphi_5(t)\|^2_{L^2(\Omega)} +\|\varphi_6(t)\|^2_{H^1(\Omega)}\,\|\varphi_7(t)\|^2_{H^1(\Omega)}\,\|\varphi_8(t)\|^2_{H^1(\Omega)}\Big)\nonumber\\
&\hspace{0.7cm} +\rho^2\,C(\Omega)\, \left(\|\psi_1(t)\|^2_{H^1(\Omega)}+ \|\vartheta_1(t)\|^2_{H^1(\Omega)}\ \|\vartheta_2(t)\|^2_{H^1(\Omega)}+\|\vartheta_3(t)\|^2_{H^1(\Omega)}\ \|\vartheta_4(t)\|^2_{H^1(\Omega)}\right. \nonumber\\
&\hspace{0.7cm} \left. +\,\|\upsilon_1(t)\|^4_{H^1(\Omega)}+ \|\upsilon_2(t)\|^4_{H^1(\Omega)} \right) \|\zeta_n(t)\|^2_{H^1(\Omega)}+C(\Omega)\ \|\zeta_n(t)\|^2_{H^2(\Omega)}+\rho^2\,C(\Omega)\|\zeta_n(t)\|^6_{H^1(\Omega)},
\end{align} 
where $\displaystyle C(\Omega)= \max_{1\leq i \leq 4} \Big\{C_i(\Omega),4\Big\}$.

Furthermore, by applying the $L^2(\Omega)$ inner product of \eqref{GA-SS} with $\Delta^2 \zeta_n$, and integrating by parts over the spatial domain, we obtain
		\begin{align}\label{ES-E3}
			\frac{1}{2} \frac{d}{dt} &\|\Delta \zeta_n(t)\|^2_{L^2(\Omega)} + \|\nabla\Delta  \zeta_n(t)\|^2_{L^2(\Omega)} =-\sum_{k=1}^6 \int_{\Omega} \nabla \mathscr{F}_k\cdot \nabla \Delta \zeta_n(t)\ dx.
		\end{align} 
		Next, we proceed to estimate each term on the right-hand side of equation \eqref{ES-E3} individually. For the first term, by applying H\"older's inequality in combination with the Sobolev embeddings $H^1(\Omega) \hookrightarrow L^p(\Omega)$ for $p\in [2,6]$ and $H^2(\Omega) \hookrightarrow L^{\infty}(\Omega)$, we derive
		\begin{align*}
		&\left | \int_{\Omega} \nabla \mathscr{F}_1(\rho,\varphi)\cdot \nabla\Delta \zeta_n(t)\ dx\ \right | \leq \rho\ \Big( \|\nabla \varphi_1(t)\|_{L^2(\Omega)}+\|\nabla\varphi_2(t)\|_{L^4(\Omega)}\,\|\Delta \varphi_3(t)\|_{L^4(\Omega)}\\
		&\hspace{1.5cm}+\|\varphi_2(t)\|_{L^\infty(\Omega)}\,\|\nabla\Delta \varphi_3(t)\|_{L^2(\Omega)}+\|\nabla\varphi_4(t)\|_{L^4(\Omega)}\,\|\varphi_5(t)\|_{L^4(\Omega)}+\|\varphi_4(t)\|_{L^\infty(\Omega)}\,\|\nabla\varphi_5(t)\|_{L^2(\Omega)}&\\
		&\hspace{1.5cm} +\|\nabla\varphi_6(t)\|_{L^2(\Omega)}\,\|\varphi_7(t)\|_{L^\infty(\Omega)}\,\|\varphi_8(t)\|_{L^\infty(\Omega)} +\|\varphi_6(t)\|_{L^6(\Omega)}\,\|\nabla\varphi_7(t)\|_{L^6(\Omega)}\,\|\varphi_8(t)\|_{L^6(\Omega)}\\
		&\hspace{1.5cm} +\|\varphi_6(t)\|_{L^6(\Omega)}\,\|\varphi_7(t)\|_{L^6(\Omega)}\,\|\nabla\varphi_8(t)\|_{L^6(\Omega)}\Big)\, \|\nabla\Delta \zeta_n(t)\|_{L^2(\Omega)}\\
		&\hspace{0.7cm}	\leq \epsilon \,\|\nabla \Delta \zeta_n(t)\|^2_{L^2(\Omega)} +\rho^2\ \bar C_1(\Omega, \epsilon)\ \Big( \|\varphi_1(t)\|^2_{H^1(\Omega)}+\|\varphi_2(t)\|^2_{H^2(\Omega)}\,\| \varphi_3(t)\|^2_{H^3(\Omega)}+\|\varphi_4(t)\|^2_{H^2(\Omega)}\,\|\varphi_5(t)\|^2_{H^1(\Omega)}&\\
		&\hspace{1.5cm} +\|\varphi_6(t)\|^2_{H^1(\Omega)}\,\|\varphi_7(t)\|^2_{H^2(\Omega)}\,\|\varphi_8(t)\|^2_{H^2(\Omega)} \Big).
		\end{align*}
	Similarly, we estimate the second term on the right-hand side of equation \eqref{ES-E3} by following the same line of reasoning as for the first term. Additionally, we utilize cross product property $\big( \psi_1 \times \nabla \Delta \zeta_n \big)\cdot \nabla \Delta \zeta_n=0 $, along with the estimates provided in equations \eqref{ES3} and \eqref{ES9} to get
\begin{align*}
&\left | \int_{\Omega} \nabla \mathscr{F}_2(\rho,\psi,\zeta_n)\cdot \nabla\Delta \zeta_n(t)\ dx\ \right | \leq \rho\ \Big( \|\nabla \psi_1(t)\|_{L^\infty(\Omega)}\,\|\Delta \zeta_n(t)\|_{L^2(\Omega)} +\|\nabla \zeta_n(t)\|_{L^4(\Omega)}\, \|\Delta \psi_2(t)\|_{L^4(\Omega)} &\\
&\hspace{1.5cm}+ \| \zeta_n(t)\|_{L^\infty(\Omega)}\, \|\nabla \Delta \psi_2(t)\|_{L^2(\Omega)} +\|\nabla \zeta_n(t)\|_{L^6(\Omega)}\, \|\Delta \zeta_n(t)\|_{L^3(\Omega)}\Big)\ \|\nabla \Delta \zeta_n(t)\|_{L^2(\Omega)}\\
&\hspace{0.7cm} \leq  \epsilon\,\|\nabla \Delta \zeta_n(t)\|^2_{L^2(\Omega)} +\rho^2\ \bar C_2(\Omega, \epsilon)\ \Big(\|\psi_1(t)\|^2_{H^3(\Omega)}+\|\psi_2(t)\|^2_{H^3(\Omega)}\Big)\ \|\zeta_n(t)\|^2_{H^2(\Omega)}+\rho^4\,\bar C_2(\Omega, \epsilon) \,\|\Delta \zeta_n(t)\|^6_{L^2(\Omega)}.
\end{align*}			
		Continuing in the same way for the third term, we find
		\begin{align*}
			&\left | \int_{\Omega} \nabla \mathscr{F}_3(\rho,h,\zeta_n)\cdot \nabla\Delta \zeta_n(t)\ dx\ \right | \leq \rho\ \|\nabla \Delta \zeta_n(t)\|_{L^2(\Omega)}\Big( \|\nabla \zeta_n(t)\|_{L^4(\Omega)}\,\|h(t)\|_{L^4(\Omega)}&\\
			& \hspace{1.5cm}+\|\zeta_n(t)\|_{L^\infty(\Omega)}\, \|\nabla h(t)\|_{L^2(\Omega)} \Big)  \leq  \epsilon\,\|\nabla \Delta \zeta_n(t)\|^2_{L^2(\Omega)} +\rho^2\ \bar C_3(\Omega, \epsilon)\  \|h(t)\|^2_{H^1(\Omega)}\,\|\zeta_n(t)\|^2_{H^2(\Omega)}.
		\end{align*}	
		Finally, for the remaining terms on the right-hand side of equation \eqref{ES-E3}, we proceed as we did for the term $\big(\varphi_6 \cdot \varphi_7\big)\varphi_8$ and obtain  	
		\begin{align*}
			&\left | \int_{\Omega} \nabla \mathscr{F}_4(\rho,\vartheta,\zeta_n)\cdot \nabla\Delta \zeta_n(t)\ dx+\int_{\Omega} \nabla \mathscr{F}_5(\rho,\upsilon,\zeta_n)\cdot \nabla\Delta \zeta_n(t)\ dx \right | &\\
			&\hspace{0.7cm} \leq     \epsilon\, \|\nabla \Delta \zeta_n(t)\|^2_{L^2(\Omega)}+\rho^2\ \bar C_4(\Omega, \epsilon)\ \left(\|\vartheta_1(t)\|^2_{H^1(\Omega)}\,\|\vartheta_2(t)\|^2_{H^2(\Omega)}+\|\vartheta_3(t)\|^2_{H^1(\Omega)}\,\|\vartheta_4(t)\|^2_{H^2(\Omega)}\right)\, \|\zeta_n(t)\|^2_{H^2(\Omega)}\\
			&\hspace{1cm}+\rho^2\ \bar C_5(\Omega, \epsilon)\ \left(\|\upsilon_1(t)\|^4_{H^1(\Omega)}+\|\upsilon_2(t)\|^4_{H^1(\Omega)}\right)\, \|\zeta_n(t)\|^2_{H^2(\Omega)}+\frac{\rho^2}{2}\ \bar C_5(\Omega, \epsilon)\ \|\zeta_n(t)\|^6_{H^2(\Omega)},
		\end{align*}
		and
		\begin{align*}
			&\left | \int_{\Omega} \nabla \mathscr{F}_6(\rho,\zeta_n)\cdot \nabla\Delta \zeta_n(t)\ dx\ \right | \leq 3\,\rho\ \| \zeta_n(t)\|^2_{L^6(\Omega)} \|\nabla \zeta_n(t)\|_{L^6(\Omega)}\|\nabla \Delta \zeta_n(t)\|_{L^2(\Omega)}&\\
			&\hspace{1cm}\leq \epsilon\,\|\nabla \Delta \zeta_n(t)\|^2_{L^2(\Omega)}+\frac{\rho^2}{2}\ \bar C_6(\Omega, \epsilon)\,\| \zeta_n(t)\|^4_{H^1(\Omega)} \,\|\zeta_n(t)\|^2_{H^2(\Omega)}.
		\end{align*}	
		
Next, we substitute all the aforementioned estimates into equation \eqref{ES-E3}, select $\epsilon=\frac{1}{10}$, and combine the resulting expression with equation \eqref{ES-E2}, we get %Furthermore, we add $\|\zeta_n(t)\|^2_{H^2(\Omega)}$ on both sides, and use the norm estimates \eqref{ES4}, \eqref{ES7} and \eqref{ES10} to obtain 
\begin{flalign}\label{FRES}
	&\frac{d}{dt} \left(\|\zeta_n(t)\|^2_{L^2(\Omega)} + \|\Delta \zeta_n(t)\|^2_{L^2(\Omega)} \right) +  \|\zeta_n(t)\|^2_{H^3(\Omega)} \left( 1- C^*(\Omega) \ \rho^2\ \| \zeta_n(t)\|^4_{H^2(\Omega)}-C^*(\Omega)\ \rho^4\ \|\Delta \zeta_n(t)\|^4_{L^2(\Omega)}\right) &\nonumber\\
	&\hspace{0.7cm} \leq \rho^2\ C^*(\Omega)\ \Big( \|\varphi_1(t)\|^2_{H^1(\Omega)}+\|\varphi_2(t)\|^2_{H^2(\Omega)}\,\| \varphi_3(t)\|^2_{H^3(\Omega)}+\|\varphi_4(t)\|^2_{H^2(\Omega)}\,\|\varphi_5(t)\|^2_{H^1(\Omega)}\nonumber\\
	&\hspace{1.5cm} +\|\varphi_6(t)\|^2_{H^1(\Omega)}\,\|\varphi_7(t)\|^2_{H^2(\Omega)}\,\|\varphi_8(t)\|^2_{H^2(\Omega)} \Big)+\rho^2\ C^*(\Omega)\, \Big( \|\psi_1(t)\|^2_{H^3(\Omega)}+\|\psi_2(t)\|^2_{H^3(\Omega)}\nonumber\\
	&\hspace{1.5cm} + \|h(t)\|^2_{H^1(\Omega)}+\|\vartheta_1(t)\|^2_{H^1(\Omega)}\,\|\vartheta_2(t)\|^2_{H^2(\Omega)}+\|\vartheta_3(t)\|^2_{H^1(\Omega)}\,\|\vartheta_4(t)\|^2_{H^2(\Omega)}\nonumber\\
	&\hspace{1.5cm}+\|\upsilon_1(t)\|^4_{H^1(\Omega)}+\|\upsilon_2(t)\|^4_{H^1(\Omega)}  \Big)\ \|\zeta_n(t)\|^2_{H^2(\Omega)} +  C^*(\Omega)\ \|\zeta_n(t)\|^2_{H^2(\Omega)},
\end{flalign}
where $\displaystyle C^*(\Omega)= C(\Omega)+\max_{1\leq i \leq 6} \bar C_i(\Omega)$.\\
   %Note that, in expression \eqref{FRES}, all occurrences of $C^*(\Omega)$ represent the same fixed constant, in contrast to the constants $C(\Omega)$ appearing in the estimates of the terms in \eqref{ES-E1} and \eqref{ES-E3}, which may vary.
Now, before proceeding further let us define
	\begin{align*}
	&\mathscr{C}(\Omega,T,\rho, \sigma, \varphi,\psi,h,\vartheta,\upsilon) :=\rho^2\ C^*(\Omega) \ \Big( \sigma + \|\varphi_1\|^2_{L^2(0,T;H^1(\Omega))} +\|\varphi_2\|^2_{L^\infty(0,T;H^2(\Omega))}\|\varphi_3\|^2_{L^2(0,T;H^3(\Omega))}\\
	&\hspace{0.4cm}+\|\varphi_4\|^2_{L^\infty(0,T;H^2(\Omega))} \|\varphi_5\|^2_{L^2(0,T;H^1(\Omega))}+\|\varphi_6\|^2_{L^\infty(0,T;H^1(\Omega))}\|\varphi_7\|^2_{L^\infty(0,T;H^2(\Omega))}\|\varphi_8\|^2_{L^2(0,T;H^2(\Omega))}\Big)\\
	&\hspace{0.7cm}\times \exp\left\{C^*(\Omega)\ \Big(T + \rho^2\, \Big( \|\psi_1\|^2_{L^2(0,T;H^3(\Omega))}+\|\psi_2\|^2_{L^2(0,T;H^3(\Omega))} +\|\vartheta_1\|^2_{L^\infty(0,T;H^1(\Omega))}\|\vartheta_2\|^2_{L^2(0,T;H^2(\Omega))}\Big.\right. \nonumber\\
	&\hspace{1cm}\left.\left.	+ \|\vartheta_3\|^2_{L^\infty(0,T;H^1(\Omega))}\|\vartheta_4\|^2_{L^2(0,T;H^2(\Omega))}+\|h\|^2_{L^2(0,T;H^1(\Omega))}+\ \|\upsilon_1\|^4_{L^4(0,T;H^1(\Omega))} +\|\upsilon_2\|^4_{L^4(0,T;H^1(\Omega))}  \right)\Big)\right\}.
	\end{align*}

Fix any $0< \sigma<1,$ choose $0<\rho_0<1$ such that $1-2\,C^*(\Omega)\  \rho_0^2\, \mathscr{C}(\Omega,T, \rho_0, \sigma, \varphi,\psi,h,\vartheta,\upsilon)^2 \geq \frac{1}{2}$.  %\textcolor{blue}{Note that, the constant $\textcolor{blue}{C^*(\Omega)}$ coming in this expression is the same constant given in estimate \eqref{FRES}.} 
Since $\mathscr{C}$ is a monotonically increasing function corresponding to the variable $\rho$, therefore for any fixed $\rho$ in $[0,\rho_0],$ the following inequality holds.
\begin{equation}\label{MAIN-C}
	1- 2\,C^*(\Omega)\ \rho^2\, \mathscr{C}(\Omega,T, \rho, \sigma, \varphi,\psi,h,\vartheta,\upsilon)^2 \geq \frac{1}{2}.	
\end{equation}

\noindent We want to show that for any such fixed values of $\sigma$ and $\rho$, the following inequality holds:
		\begin{equation}\label{CLM}
			\|\zeta_n(t)\|^2_{L^2(\Omega)} + \|\Delta \zeta_n(t)\|^2_{L^2(\Omega)} \leq \mathscr{C}(\Omega,T,\rho, \sigma,\varphi,\psi,h,\vartheta,\upsilon), \ \ \ \ \text{for all } t\in[0,T].
		\end{equation}
%		where
%		\begin{align*}
%		&\mathscr{C}(\Omega,T,\varphi,\psi,h,\vartheta,\upsilon) :=\rho^2\ \Big( \sigma + \|\varphi_1\|^2_{L^2(0,T;H^1(\Omega))} +\|\varphi_2\|^2_{L^\infty(0,T;H^2(\Omega))}\|\varphi_3\|^2_{L^2(0,T;H^3(\Omega))}\\
%		&\hspace{0.4cm}+\|\varphi_4\|^2_{L^\infty(0,T;H^2(\Omega))} \|\varphi_5\|^2_{L^2(0,T;H^1(\Omega))}+\|\varphi_6\|^2_{L^\infty(0,T;H^1(\Omega))}\|\varphi_7\|^2_{L^\infty(0,T;H^2(\Omega))}\|\varphi_8\|^2_{L^2(0,T;H^2(\Omega))}\Big)\\
%		&\hspace{0.7cm}\times \exp\left\{C+C\ \rho^2\, \Big( \|\psi_1\|^2_{L^2(0,T;H^3(\Omega))}+\|\psi_2\|^2_{L^2(0,T;H^3(\Omega))} +\|\vartheta_1\|^2_{L^\infty(0,T;H^1(\Omega))}\|\vartheta_2\|^2_{L^2(0,T;H^2(\Omega))}\Big.\right. \nonumber\\
%		&\hspace{1cm}\left.\left.	+ \|\vartheta_3\|^2_{L^\infty(0,T;H^1(\Omega))}\|\vartheta_4\|^2_{L^2(0,T;H^2(\Omega))}+\|h\|^2_{L^2(0,T;H^1(\Omega))}+\ \|\upsilon_1\|^4_{L^4(0,T;H^1(\Omega))} +\|\upsilon_2\|^4_{L^4(0,T;H^1(\Omega))}  \right)\right\}.
%	\end{align*}
		Clearly, for $t=0$ this inequality holds as
		\begin{equation*}
		\|\zeta_n(0)\|^2_{L^2(\Omega)} + \|\Delta \zeta_n(0)\|^2_{L^2(\Omega)}= 0 <\mathscr{C}(\Omega,T,\rho, \sigma,\varphi,\psi,h,\vartheta,\upsilon).
		\end{equation*}
		%\begin{align*}
		%	&\|\zeta_n(0)\|^2_{L^2(\Omega)} + \|\Delta \zeta_n(0)\|^2_{L^2(\Omega)}= 0 < C\,\rho^2\ \left( \sigma + \|\varphi\|^2_{L^2(0,T;H^1(\Omega))}\right)\ \exp\left\{C\ \big(1+\rho^2)\, \left( \|\psi\|^2_{L^2(0,T;H^3(\Omega))}\right.\right. \nonumber\\
		%	&\hspace{3cm}\left.\left.	 + \|\varsigma\|^4_{L^4(0,T;H^1(\Omega))} +\|\vartheta\|^2_{L^\infty(0,TH^1(\Omega))}\ \|\upsilon\|^2_{L^2(0,T;H^2(\Omega))}  \right)\right\}. 
		%\end{align*}
		Since $a_n\in C^1([0,t_m);\mathbb{R}^n)$ in \eqref{ODE}, we have $\zeta_n \in C([0,t_m);H^2(\Omega))$. As a consequence, there exists a time $\tau>0$ such that  
		\begin{align}\label{CL-T1}
			&\|\zeta_n(t)\|^2_{L^2(\Omega)} + \|\Delta \zeta_n(t)\|^2_{L^2(\Omega)} < \mathscr{C}(\Omega,T,\rho, \sigma,\varphi,\psi,h,\vartheta,\upsilon)  \ \ \ \text{for all}\ t\in[0,\tau].
		\end{align}
		Let $T_{max}$ be the maximum of  all $\tau$ such that \eqref{CL-T1} holds. Then by the definition of $T_{max}$, we have
		\begin{align}\label{CL-T2}
			&\|\zeta_n(t)\|^2_{L^2(\Omega)} + \|\Delta \zeta_n(t)\|^2_{L^2(\Omega)} \leq  \mathscr{C}(\Omega,T,\rho, \sigma, \varphi ,\psi,h,\vartheta,\upsilon)\ \ \ \ \text{for all}\ t\in[0,T_{max}].
		\end{align}
		If $T_{max}=T$, then we are done. If not, let us assume that $T_{max}<T$. 
		
	To estimate the second term on the left-hand side of \eqref{FRES}, we use the equivalent norm estimate for $\|\zeta_n(t)\|^2_{H^2(\Omega)}$ from Lemma \ref{EN} and substitute \eqref{CL-T2}. From our selection of $\rho_0$ in inequality \eqref{MAIN-C}, we get the following estimate for 		
%		Then there exists a constant $1>\rho_0>0$ such that the following inequality holds for every 
$\rho\in [0,\rho_0]$ and $t\in[0,T_{max}]$:
		\begin{align}\label{CL-T3}
			&\|\zeta_n(t)\|^2_{H^3(\Omega)}\left(1-C^*(\Omega)\ \rho^2 \ \|\zeta_n(t)\|^4_{H^2(\Omega)} - C^*(\Omega)\ \rho^4 \ \|\Delta \zeta_n(t)\|^4_{L^2(\Omega)}\right) &\nonumber\\ 
			&\hspace{2cm}\geq \|\zeta_n(t)\|^2_{H^3(\Omega)}  \bigg(1- 2\,C^*(\Omega) \, \rho^2\,\|\zeta_n(t)\|^4_{H^2(\Omega)}\bigg)&\nonumber\\ 
			&\hspace{2cm}\geq \|\zeta_n(t)\|^2_{H^3(\Omega)}  \bigg(1- 2\,C^*(\Omega) \ \rho^2\, \mathscr{C}(\Omega,T,\rho, \sigma,\varphi,\psi,h,\vartheta,\upsilon)^2\bigg)\geq \frac{1}{2} \|\zeta_n(t)\|^2_{H^3(\Omega)}. 
			%\left( \sigma + \|\varphi\|^2_{L^2(0,T;H^1(\Omega))}\right)^2 \nonumber\\
%			&\hspace{0.7cm} \exp\left\{2C\ \big(1+\rho^2)\, \left( \|\psi\|^2_{L^2(0,T;H^3(\Omega))}+ \ \|\varsigma\|^4_{L^4(0,T;H^1(\Omega))} +\|\vartheta\|^2_{L^\infty(0,TH^1(\Omega))}\ \|\upsilon\|^2_{L^2(0,T;H^2(\Omega))}  \right)\right\}\bigg) \geq 0.
		\end{align}
		Substituting this estimate in \eqref{FRES}, 
		% we have 
		%\begin{align}\label{CL-T4}
		%	&\frac{d}{dt} \left(\|\zeta_n(t)\|^2_{L^2(\Omega)} + \|\Delta \zeta_n(t)\|^2_{L^2(\Omega)} \right) \leq C\,\rho^2\,\| \varphi(t)\|^2_{H^1(\Omega)}\nonumber\\
		%	&\hspace{0.5cm} +C\ \big(1+\rho^2)\, \left( \|\psi(t)\|^2_{H^3(\Omega)} + \|\varsigma(t)\|^4_{H^1(\Omega)} +\|\vartheta(t)\|^2_{H^1(\Omega)}\ \|\upsilon(t)\|^2_{H^2(\Omega)}  \right) \, \|\zeta_n(t)\|^2_{H^2(\Omega)}\ \ \ \ \text{for}\ t\in [0,T_{max}].
		%\end{align}
		and applying Gronwall's inequality, we obtain 
	%	\begin{align*}
	%		&\|\zeta_n(t)\|^2_{L^2(\Omega)} + \|\Delta \zeta_n(t)\|^2_{L^2(\Omega)}   \leq \rho^2\ C\ \Big( \|\varphi_1(t)\|^2_{H^1(\Omega)}+\|\varphi_2(t)\|^2_{H^2(\Omega)}\,\| \varphi_3(t)\|^2_{H^3(\Omega)}+\|\varphi_4(t)\|^2_{H^1(\Omega)}\,\|\varphi_5(t)\|^2_{H^2(\Omega)}\nonumber\\
	%		&\hspace{.5cm} +\|\varphi_6(t)\|^2_{H^1(\Omega)}\,\|\varphi_7(t)\|^2_{H^2(\Omega)}\,\|\varphi_8(t)\|^2_{H^2(\Omega)} \Big)+\rho^2\ C\, \Big( \|\psi_1(t)\|^2_{H^3(\Omega)}+\|\psi_2(t)\|^2_{H^3(\Omega)} +\|\vartheta_1(t)\|^2_{H^1(\Omega)}\,\|\vartheta_2(t)\|^2_{H^2(\Omega)}\nonumber\\
	%		&\hspace{.5cm}+\|\vartheta_3(t)\|^2_{H^1(\Omega)}\,\|\vartheta_4(t)\|^2_{H^2(\Omega)}+ \|h(t)\|^2_{H^1(\Omega)}+\|\upsilon_1(t)\|^4_{H^1(\Omega)}+\|\upsilon_2(t)\|^4_{H^1(\Omega)}  \Big)\ \|\zeta_n(t)\|^2_{H^2(\Omega)}+C\ \|\zeta_n(t)\|^2_{H^2(\Omega)}.
	%		&\hspace{4cm}< \mathscr{C}(\Omega,T,\varphi,\psi,h,\vartheta,\upsilon) \ \ \ \ \ \text{for}\ t\in [0,T_{max}]. 
	%	\end{align*}
			\begin{align*}
			&\|\zeta_n(t)\|^2_{L^2(\Omega)} + \|\Delta \zeta_n(t)\|^2_{L^2(\Omega)} \leq \rho^2\  C^*(\Omega)\    \Big(  \|\varphi_1\|^2_{L^2(0,T;H^1(\Omega))} +\|\varphi_2\|^2_{L^\infty(0,T;H^2(\Omega))}\|\varphi_3\|^2_{L^2(0,T;H^3(\Omega))}\\
			&\hspace{0.4cm}+\|\varphi_4\|^2_{L^\infty(0,T;H^2(\Omega))} \|\varphi_5\|^2_{L^2(0,T;H^1(\Omega))}+\|\varphi_6\|^2_{L^\infty(0,T;H^1(\Omega))}\|\varphi_7\|^2_{L^\infty(0,T;H^2(\Omega))}\|\varphi_8\|^2_{L^2(0,T;H^2(\Omega))}\Big)\\
			&\hspace{0.7cm}\times \exp\left\{ C^*(\Omega)\  \Big( T+ \rho^2\, \Big( \|\psi_1\|^2_{L^2(0,T;H^3(\Omega))}+\|\psi_2\|^2_{L^2(0,T;H^3(\Omega))} +\|\vartheta_1\|^2_{L^\infty(0,T;H^1(\Omega))}\|\vartheta_2\|^2_{L^2(0,T;H^2(\Omega))}\Big.\right. \nonumber\\
			&\hspace{.71cm}\left.\left.	+ \|\vartheta_3\|^2_{L^\infty(0,T;H^1(\Omega))}\|\vartheta_4\|^2_{L^2(0,T;H^2(\Omega))}+\|h\|^2_{L^2(0,T;H^1(\Omega))}+\ \|\upsilon_1\|^4_{L^4(0,T;H^1(\Omega))} +\|\upsilon_2\|^4_{L^4(0,T;H^1(\Omega))}  \right)\Big)\right\}\\
			&< \mathscr{C}(\Omega,T,\rho,\sigma,\varphi,\psi,h,\vartheta,\upsilon).
		\end{align*}
		%\textcolor{blue}{Note that, the factor 2 in this expression arises from unifying the constant $C^*(\Omega)$ associated with the coefficient of the exponential term, by employing the elementary estimate $xe^x \leq e^{2x}$.}

		Now, by the continuity of $\|\zeta_n(t)\|^2_{H^2(\Omega)}$, there exists a time $T^\#>T_{max}$ such that \eqref{CLM} holds on $[0,T^\#]$. This contradicts the definition of $T_{max}$. Therefore, we must have $T_{max}=T$, which establishes our claim \eqref{CLM}, that is
		\begin{equation}\label{CL-T5}
			\|\zeta_n\|^2_{L^\infty(0,T;H^2(\Omega))} \leq \mathscr{C}(\Omega,T,\rho, \sigma,\varphi,\psi,h,\vartheta,\upsilon).
		\end{equation}
%		where
%		\begin{align*}
%			&\mathscr{C}(\Omega,T,\varphi,\psi,\varsigma,\vartheta,\upsilon) :=C\,\rho^2\ \left( \sigma + \|\varphi\|^2_{L^2(0,T;H^1(\Omega))}\right)\ \exp\left\{C\ \big(1+\rho^2)\, \left( \|\psi\|^2_{L^2(0,T;H^3(\Omega))}\right.\right. \nonumber\\
%			&\hspace{2cm}\left.\left.	 +\ \|\varsigma\|^4_{L^4(0,T;H^1(\Omega))} +\|\vartheta\|^2_{L^\infty(0,TH^1(\Omega))}\ \|\upsilon\|^2_{L^2(0,T;H^2(\Omega))}  \right)\right\}.
%		\end{align*}
		Next, substituting \eqref{CL-T3} in \eqref{FRES}, taking integration over $[0,T]$ and using the uniform bound \eqref{CL-T5}, we derive
%		\begin{align*}
%		\|\zeta_n(t)\|^2_{H^3(\Omega)}&\left(1-C\ \rho^2 \ \|\zeta_n(t)\|^4_{H^2(\Omega)}-C\ \rho^4 \ \|\Delta \zeta_n(t)\|^4_{L^2(\Omega)}\right)\\
%		& \geq \|\zeta_n(t)\|^2_{H^3(\Omega)}\left(1-C\ \rho^2 \ \|\zeta_n(t)\|^4_{H^2(\Omega)}\right)\\
%		& \geq \|\zeta_n(t)\|^2_{H^3(\Omega)}  \bigg(1- \rho^2\, \mathscr{C}(\Omega,T,\varphi,\psi,h,\vartheta,\upsilon)^2\bigg) \geq \frac{1}{2} \|\zeta_n(t)\|^2_{H^3(\Omega)}\ \ \ \ \ \ \text{for}\ t\in [0,T]. 	
%		\end{align*} 	
%		Substituting this in estimate \eqref{FRES}, and applying Gr\"onwall's inequality, we get
		\begin{equation}\label{CL-T6}
			\int_0^T \|\zeta_n(\tau)\|^2_{H^3(\Omega)} d\tau \leq \mathscr{C}(\Omega,T,\rho, \sigma,\varphi,\psi,h,\vartheta,\upsilon).
		\end{equation}
		Furthermore, by taking $L^2(0,T;H^1(\Omega))$ norm of $\big(\zeta_n\big)_t$ in equation  \eqref{GA-SS}, we derive
		\begin{align*}
			&\|\big(\zeta_n\big)_t\|^2_{L^2(0,T;H^1(\Omega))} \leq \|\zeta_n\|^2_{L^2(0,T;H^3(\Omega))} + \rho^2\ \Big( \|\varphi_1\|^2_{L^2(0,T;H^1(\Omega))} +\|\varphi_2\|^2_{L^\infty(0,T;H^2(\Omega))} \|\varphi_3\|^2_{L^2(0,T;H^3(\Omega))}\\
			&\hspace{0.5cm} +\|\varphi_4\|^2_{L^\infty(0,T;H^2(\Omega))}\ \|\varphi_5\|^2_{L^2(0,T;H^1(\Omega))}  +\|\varphi_6\|^2_{L^\infty(0,T;H^1(\Omega))} \|\varphi_7\|^2_{L^\infty(0,T;H^2(\Omega))} \|\varphi_8\|^2_{L^2(0,T;H^2(\Omega))} \Big)\\
			&\hspace{0.5cm} +\rho^2\ \|\zeta_n\|^2_{L^2(0,T;H^3(\Omega))}\ \|\psi_1\|^2_{L^\infty(0,T;H^2(\Omega))}+\rho^2\ \|\zeta_n\|^2_{L^\infty(0,T;H^2(\Omega))}\ \Big( \|\psi_2\|^2_{L^2(0,T;H^3(\Omega))} \nonumber\\
			&\hspace{0.5cm} +\|\zeta_n\|^2_{L^2(0,T;H^3(\Omega))} + \|h\|^2_{L^2(0,T;H^1(\Omega))} +\|\vartheta_1\|^2_{L^\infty(0,T;H^1(\Omega))}\|\vartheta_2\|^2_{L^2(0,T;H^2(\Omega))} \nonumber\\
			&\hspace{0.5cm}+ \|\vartheta_3\|^2_{L^\infty(0,T;H^1(\Omega))}\|\vartheta_4\|^2_{L^2(0,T;H^2(\Omega))}+ \|\upsilon_1\|^4_{L^2(0,T;H^1(\Omega))} +\|\upsilon_2\|^4_{L^2(0,T;H^1(\Omega))} \Big) +\rho^2\, \|\zeta_n\|^6_{L^\infty(0,T;H^2(\Omega))}.
		\end{align*}
		Substituting the bounds for $\{\zeta_n\}$ in $\mathcal{M}$ from estimate \eqref{CL-T5} and \eqref{CL-T6} in the above estimate, we find
		\begin{equation}\label{CL-T7}
			\int_0^T \|\big(\zeta_n\big)_t\|^2_{H^1(\Omega)}\leq \mathscr{C}_{mod}(\Omega,T,\rho, \sigma,\varphi,\psi,h,\vartheta,\upsilon),
		\end{equation}
		where 
		\begin{align*}
			&\mathscr{C}_{mod}(\Omega,T,\rho, \sigma,\varphi,\psi,h,\vartheta,\upsilon) := \frac{\rho^2}{\sigma^2}\, \Big( \sigma + \|\varphi_1\|^2_{L^2(0,T;H^1(\Omega))} +\|\varphi_2\|^2_{L^\infty(0,T;H^2(\Omega))}\|\varphi_3\|^2_{L^2(0,T;H^3(\Omega))}\\
			&\hspace{0.4cm}+\|\varphi_4\|^2_{L^\infty(0,T;H^2(\Omega))} \|\varphi_5\|^2_{L^2(0,T;H^1(\Omega))}+\|\varphi_6\|^2_{L^\infty(0,T;H^1(\Omega))}\|\varphi_7\|^2_{L^\infty(0,T;H^2(\Omega))}\|\varphi_8\|^2_{L^2(0,T;H^2(\Omega))}\Big)^3\\
			&\hspace{0.5cm}\times \exp\left\{ C(\Omega) \ \Big( T+\ \rho^2\, \Big(\|\psi_1\|^2_{L^\infty(0,T;H^2(\Omega))}+ \|\psi_1\|^2_{L^2(0,T;H^3(\Omega))}+\|\psi_2\|^2_{L^2(0,T;H^3(\Omega))} +\|\vartheta_1\|^2_{L^\infty(0,T;H^1(\Omega))}\|\vartheta_2\|^2_{L^2(0,T;H^2(\Omega))}\Big.\right. \nonumber\\
			&\hspace{0.6cm}\left.\left.	+ \|\vartheta_3\|^2_{L^\infty(0,T;H^1(\Omega))}\|\vartheta_4\|^2_{L^2(0,T;H^2(\Omega))}+\|h\|^2_{L^2(0,T;H^1(\Omega))}+\ \|\upsilon_1\|^4_{L^4(0,T;H^1(\Omega))} +\|\upsilon_2\|^4_{L^4(0,T;H^1(\Omega))}  \right) \Big)\right\},
		\end{align*}
		in which we have utilized  $(\sigma+\phi^* )\leq \frac{1}{\sigma} (\sigma +\phi^*)^2\leq \frac{1}{\sigma^2} (\sigma +\phi^*)^3$ for any $\phi^*\geq 0$, and    $\rho^2 < \rho$, $\frac{1}{\sigma}< \frac{1}{\sigma^2}$ for $\rho,\sigma <1$.
		
		Finally, using the bounds from \eqref{CL-T5}, \eqref{CL-T6} and \eqref{CL-T7}, along with Alaoglu's weak star compactness theorem and the reflexive weak compactness theorem (Theorem 4.18, \cite{JCR}), we obtain the following convergences:
		\begin{eqnarray*}\left\{\begin{array}{cccll}
				\zeta_n &\overset{w}{\rightharpoonup} & \zeta \  &\mbox{weakly in}& \ L^2(0,T;H^3(\Omega)),\\
				\zeta_n &\overset{w^\ast}{\rightharpoonup} & \zeta \ &\mbox{weak$^*$ in}& \ L^{\infty}(0,T;H^2(\Omega)),\\
				(\zeta_n)_t &\overset{w}{\rightharpoonup} & \zeta_t \ &\mbox{weakly in}& \ L^2(0,T;H^1(\Omega)),\vspace{0.1cm}\\
				\zeta_n &\to &\zeta \ & \mbox{strongly in}& \ L^2(0,T;H^2(\Omega))\cap C(0,T;H^1(\Omega))  \ \ \ \
				\ \mbox{as} \  \ n\to \infty. 
			\end{array}\right.	
		\end{eqnarray*}

%\begin{Lem}\label{ZETA-C}
\noindent Next, using all these convergences, we will show that for any $v\in L^2(0,T;L^2(\Omega))$, the following convergences holds:
	\begin{enumerate}[label=(\roman*)]
		\item $\displaystyle \int_{\Omega_T} \Delta \zeta_n \cdot v \ dx\ \ dt \to \int_{\Omega_T} \Delta \zeta\cdot v \ dx\ dt$,\vspace{0.1in}
		\item $\displaystyle \int_{\Omega_T}  \mathbb{P}_n\big(\psi_1 \times \Delta \zeta_n\big)\cdot v \ dx\ dt \to \int_{\Omega_T}  \big(\psi_1 \times \Delta \zeta\big)\cdot v \ dx\ dt$,\vspace{0.1in}
		\item $\displaystyle\int_{\Omega_T}  \mathbb{P}_n\big(\zeta_n \times h\big)\cdot v \ dx\ dt  \to \int_{\Omega_T}  \big(\zeta \times h\big)\cdot v \ dx\ dt$,\vspace{0.1in}
		\item $\displaystyle \int_{\Omega_T}  \mathbb{P}_n\big((\zeta_n\cdot \vartheta_1)\ \vartheta_2 \big)\cdot v \ dx\ dt \to \int_{\Omega_T}  \big((\zeta\cdot \vartheta_1)\ \vartheta_2\big)\cdot v \ dx\ dt$,\vspace{0.1in}
		\item $\displaystyle \int_{\Omega_T}  \mathbb{P}_n\big((\zeta_n\cdot \upsilon_2)\ \zeta_n\big)\cdot v \ dx\ dt \to \int_{\Omega_T}  \big((\zeta\cdot \upsilon_2)\ \zeta\big)\cdot v \ dx\ dt$,\vspace{0.1in}
		\item $\displaystyle \int_{\Omega_T}  \mathbb{P}_n\left(|\zeta_n|^2\zeta_n\right)\cdot v \ dx\ dt \to \int_{\Omega_T}  \left(|\zeta|^2\zeta \right)\cdot v \ dx\ dt$.
	\end{enumerate}
%	\end{Lem}	
	
%	\begin{proof}
		First, let us take any fixed $v \in L^2(0,T;W_n)$, where $W_n$ is the span of the first $n$ eigen functions $w_1,w_2,\cdots,w_n$ as defined earlier. Since we already know that $\zeta_n \to \zeta$ strongly in $L^2(0,T;H^2(\Omega))$, therefore convergence (i) is straightforward. Now, for the convergence of (ii), taking the H\"older's inequality, and using the embedding $H^1(\Omega)\hookrightarrow L^4(\Omega)$, we find
		\begin{align*}
			\bigg|\int_{\Omega_T}  &\Big[\big(\psi_1\times  \Delta \zeta_n\big)-\big(\psi_1\times \Delta \zeta\big)\Big]\cdot v\ dx\ dt\bigg| %\leq  \int_0^T \Big|\big((\Psi_k-\Psi) \times \Delta \zeta_n\big)\cdot v \Big|\ dt\\
			\leq  \int_0^T \|\psi_1(t)\|_{L^4(\Omega)} \  \|\Delta \big(\zeta_n-\zeta\big)(t)\|_{L^2(\Omega)} \  \|v(t)\|_{L^4(\Omega)}\ dt\\  
			&\leq C\ \|\psi_1\|_{L^\infty(0,T;H^1(\Omega))}\  \|\zeta_n-\zeta\|_{L^2(0,T;H^2(\Omega))}\  \|v\|_{L^2(0,T;H^1(\Omega))}  \ \to 0 \ \ \ \mbox{as} \ n \to \infty.
		\end{align*}
	For the (v) convergence, again appealing to H\"older's inequality followed by $H^1(\Omega) \hookrightarrow L^4(\Omega)$, we obtain
		\begin{align*}
			\bigg|\int_{\Omega_T}   &\Big[\big( \zeta_n\cdot \upsilon_2\big)\ \zeta_n -\big( \zeta\cdot \upsilon_2\big)\ \zeta\Big]\cdot v\ dx\ dt\bigg|\\
			% \leq  \int_0^T \Big|\big(\big(\zeta_n\cdot (\upsilon_2-\upsilon_2) \big)\ \zeta_n,v\big) \Big|\ dt\\
			&\leq  \int_0^T \|\zeta_n(t)-\zeta(t)\|_{L^4(\Omega)} \ \|\upsilon_2(t)\|_{L^4(\Omega)} \  \left( \|\zeta_n(t)\|_{L^4(\Omega)} \  +\|\zeta(t)\|_{L^4(\Omega)} \  \right)  \|v(t)\|_{L^4(\Omega)}\ dt\\  
			&\leq C\ \|\zeta_n-\zeta\|_{L^2(0,T;H^1(\Omega))}\  \|\upsilon_2\|_{L^\infty(0,T;H^1(\Omega))}\ \left(\|\zeta_n\|_{L^\infty(0,T;H^1(\Omega))}+\|\zeta\|_{L^\infty(0,T;H^1(\Omega))}\right)\   \|v\|_{L^2(0,T;H^1(\Omega))}\\
			&\ \ \ \ \ \ \ \to 0\ \  \ \mbox{as} \ n \to \infty.
		\end{align*}
		The rest of the convergences (iii), (iv), and (vi) that follow from similar arguments. Now, since $\{w_n\}$ is dense in $H^1(\Omega)$ space, therefore, these convergences hold for every $v\in L^2(0,T;H^1(\Omega))$.
		
%can be shown by following the similar arguments to (ii). 
%		The convergence (vi) is similar to (v). Now, for the final convergence \eqref{vii}, we will take $v \in L^2(0,T;H^1(\Omega))$. Proceeding in the same way as above, we deduce
%		\begin{align*}
%			\bigg|\int_{\Omega_T}  &\Big[\big(\widetilde{m} \times  h_k\big)-\big(\widetilde{m}\times h\big)\Big]\cdot v\ dx\ dt\bigg| %\leq  \int_0^T \Big|\big((z_k-z) \times \Delta \widetilde{m}\big)\cdot v \Big|\ dt\\
%			\leq  \int_0^T \|h_k(t)-h(t)\|_{H^1(\Omega)^*} \  \|\widetilde{m}(t)\times v(t)\|_{H^1(\Omega)}\ dt\\  
%			&\leq C\ \|h_k-h\|_{L^2(0,T;H^1(\Omega))}\  \|\widetilde{m}\|_{L^\infty(0,T;H^2(\Omega))}\  \|v\|_{L^2(0,T;H^1(\Omega))}  \ \to 0 \ \ \ \mbox{as} \ n \to \infty.
%		\end{align*}
		
%	\end{proof}

	Finally, by utilizing these convergences, we confirm that $\zeta $ is a regular solution to the system \eqref{ER-SF}. Additionally, the uniqueness of the regular solution can be established using classical methods. Moreover, employing weak sequential lower semi-continuity and leveraging \eqref{CL-T6} and \eqref{CL-T7}, we derive the desired estimate \eqref{SE-ZETA}.
Furthermore, from the definition of the constant $\mathscr{C}$, we observe that $\zeta\to 0$ in $\mathcal{M}$ as $\rho \to 0^+$.
\end{proof}

\subsection{First-Order Optimality Condition}\label{S-FOOC}

	Next, we proceed to establish the first-order necessary optimality condition for the problem TOCP in the form of a variational inequality by employing the classical adjoint problem approach.

	\begin{proof}[Proof of Theorem \ref{T-FOOCT}]
We will prove this theorem in five steps. As a consequence of Theorem \ref{T-RS}, we define a function, known as the control-to-state operator, which maps each control in $\mathcal{U}_{\text{ad}}$ to a corresponding solution in $\mathcal{M}$ for the system \eqref{NLP}.

	\noindent \textbf{Step I:} The control-to-state map is differentiable at $\widetilde{u}$. 
	
	For $\rho>0$ and for any $\widetilde{m}\in \mathcal{M}$ as well as $\widetilde{u},h\in \mathcal{U}$, we formally define the linearized systems as follows:
	\begin{equation}\label{T-LS}
		\ \ \ \ \ \ \ \ \ \ \ \begin{cases}
			\mathcal{L}_{\widetilde{u}}z= h+ \widetilde{m} \times h  \ \  \ \text{in} \ \ \Omega_T,\\
			\frac{\partial z}{\partial \eta}=0 \ \ \ \ \text{on} \ \partial \Omega_T,\\
			z(0)=0 \ \ \text{in} \ \Omega,
		\end{cases}
	\end{equation}
	where $\mathcal{L}_{\widetilde{u}}$ is the linearized operator associated with $\widetilde{u}$ defined in \eqref{CLO}.
		
	Since $h + \widetilde{m} \times h\in L^2(0,T;H^1(\Omega))$, it follows from Lemma \ref{L-SLS} that the system \eqref{T-LS} admits a unique regular solution $z\in \mathcal{M}$. 

\begin{Lem}\label{PAN-1}
For $\widetilde{u},h \in L^2(0,T;H^1(\Omega))$, $\widetilde{m},z\in \mathcal{M}$ and $\rho>0$, consider the following system:
	\begin{equation}\label{T-LS2}
		\ \ \ \ \ \ \ \begin{cases}
			\mathcal{L}_{\widetilde{u}}\, \eta_\rho = \displaystyle \sum_{i=1}^6 \mathcal{K}_i\ \ \ \text{in} \ \ \Omega_T,\\
			\frac{\partial \eta_\rho}{\partial \eta}=0 \ \ \ \ \text{on} \ \partial \Omega_T,\\
			\eta_\rho(0)=0 \ \ \text{in} \ \Omega,
		\end{cases}
	\end{equation}
	where $\mathcal{L}_{\widetilde{u}}$ is the linearized operator defined in \eqref{CLO}, and
	\begin{flalign*}
		\quad \quad \quad \quad \quad  \mathcal{K}_1&=  \rho \, z \times \Delta z +\rho \, z \times h    - 2\, \rho \, \big(\widetilde{m} \cdot z\big)z  - \rho\, |z|^2\widetilde{m} - \rho^2 \, |z|^2z,&   \\
		\mathcal{K}_2&=\rho \, z \times \Delta \eta_\rho + \rho \, \eta_\rho \times \Delta z + \rho \, \eta_\rho \times \Delta \eta_\rho, \\
		\mathcal{K}_3&=  \rho \, \eta_\rho \times h,  \\
		\mathcal{K}_4&= -2\, \rho \, \big(\widetilde{m} \cdot z\big) \eta_\rho   - 2\, \rho \, \big(\widetilde{m}\cdot \eta_\rho\big) z -2\ \rho \, \big(z\cdot \eta_\rho\big) \widetilde{m}  - \rho^2 \, |z|^2 \eta_\rho -2 \rho^2 \, \big(z\cdot \eta_\rho\big)z,  \\
		\mathcal{K}_5&=-\ 2\ \rho\, \big(\widetilde{m}\cdot \eta_\rho\big) \eta_\rho  -2 \rho^2 \big(z\cdot \eta_\rho\big)\eta_\rho-\rho\, |\eta_\rho|^2 \widetilde{m}  -\ \rho^2 \,  |\eta_\rho|^2z,\\
		\mathcal{K}_6&= - \,\rho^2\, |\eta_\rho|^2 \eta_\rho.
	\end{flalign*}
	There exists a constant $\rho_0>0$ such that the system \eqref{T-LS2} has a unique regular solution\  $\eta_{\rho}\in \mathcal{M}$ for every $0 <\rho \leq \rho_0$ . Moreover,  $\eta_\rho\to 0$ in $\mathcal{M}$ as $\rho \to 0^+$ 
\end{Lem}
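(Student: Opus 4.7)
The plan is to recognize \eqref{T-LS2} as a particular instance of the general auxiliary system \eqref{ER-SF}, and then read off existence, uniqueness, and convergence directly from Lemma \ref{L-SER}.

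First I would rewrite \eqref{T-LS2} by moving every $\eta_\rho$-dependent term of $\mathcal{L}_{\widetilde{u}}\eta_\rho$ to the right-hand side, so that the left-hand side becomes the bare heat operator $(\eta_\rho)_t-\Delta \eta_\rho$. The resulting equation is then cast in the form \eqref{ER-SF} with $\zeta=\eta_\rho$ by a term-by-term match. The driving term $\mathcal{K}_1$ (which is $\eta_\rho$-independent and of order $\rho$, with one $\rho^2$ piece) plays the role of $\mathscr{F}_1$: take $\varphi_2=\varphi_3=z$ for $\rho z\times \Delta z$, $\varphi_4=z,\varphi_5=h$ for $\rho z\times h$, and absorb the three remaining double-product pieces $-2\rho(\widetilde{m}\cdot z)z$, $-\rho|z|^2\widetilde{m}$, and $-\rho^2|z|^2z$ into $(\varphi_6\cdot\varphi_7)\varphi_8$ (the $\rho^2$ piece is handled by absorbing one extra factor of $\rho$ into one of the $\varphi_i$). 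The cross-product terms $\widetilde{m}\times\Delta\eta_\rho$ and $\eta_\rho\times\Delta\widetilde{m}$ coming from $\mathcal{L}_{\widetilde{u}}$ (coefficient $1$) together with the analogous $\rho z\times\Delta\eta_\rho, \rho\eta_\rho\times\Delta z,\rho\eta_\rho\times\Delta\eta_\rho$ of $\mathcal{K}_2$ fit into $\mathscr{F}_2$ with $\psi_1=\tfrac{1}{\rho}\widetilde{m}+z$ and $\psi_2=\tfrac{1}{\rho}\widetilde{m}+z$. Likewise $\eta_\rho\times\widetilde{u}$ together with $\mathcal{K}_3$ goes into $\mathscr{F}_3$ with $h_{\mathrm{new}}=\widetilde{u}/\rho+h$, and the remaining terms $-2(\widetilde{m}\cdot \eta_\rho)\widetilde{m}$, $-(1+|\widetilde{m}|^2)\eta_\rho$ together with $\mathcal{K}_4,\mathcal{K}_5,\mathcal{K}_6$ are distributed among $\mathscr{F}_4,\mathscr{F}_5,\mathscr{F}_6$, with the $\widetilde{m}$-factors carried by $\vartheta_i$ or $\upsilon_i$ scaled by appropriate negative powers of $\rho$.

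The key observation is that every $1/\rho$ or $1/\sqrt{\rho}$ introduced in the coefficient functions is exactly balanced by the $\rho^2$ weight that Lemma \ref{L-SER} attaches to the corresponding norm inside the exponent of $\mathscr{C}_{\mathrm{mod}}$; the exponent therefore remains bounded uniformly as $\rho\to 0^+$, while the prefactor stays of order $\rho^2/\sigma^2$. A routine verification using $\widetilde{m}\in\mathcal{M}$, $z\in\mathcal{M}$, $\widetilde{u}\in\mathcal{U}$, $h\in\mathcal{U}$, Lemma \ref{EN}, and Proposition \ref{PROP1} confirms that all the reshaped coefficients sit in the function spaces required in the hypothesis of Lemma \ref{L-SER}. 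Fixing any $\sigma\in(0,1)$ (say $\sigma=1/2$) and invoking Lemma \ref{L-SER} with these identifications then supplies a threshold $\rho_0>0$ such that for every $\rho\in(0,\rho_0]$ the system \eqref{T-LS2} admits a unique regular solution $\eta_\rho\in\mathcal{M}$ obeying the estimate \eqref{SE-ZETA}. Letting $\rho\to 0^+$, the prefactor $\rho^2/\sigma^2$ drives $\mathscr{C}_{\mathrm{mod}}$ to zero (the base $(\sigma+\cdots)^3$ and the exponential factor remain bounded by the scaling analysis above), whence $\|\eta_\rho\|_{\mathcal{M}}\to 0$.

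The main obstacle is the bookkeeping in the reduction step: aligning the many $\mathcal{K}_i$-pieces together with the $\rho$-free contributions from $\mathcal{L}_{\widetilde{u}}$ against the fixed templates $\mathscr{F}_1,\ldots,\mathscr{F}_6$, and tracking the $\rho$-scalings carefully enough to guarantee that the exponent of $\mathscr{C}_{\mathrm{mod}}$ stays bounded and the prefactor still vanishes as $\rho\to 0^+$. Uniqueness is a standard consequence of the same energy-type argument applied to the difference of two putative solutions of \eqref{T-LS2}, where the smallness of $\rho$ absorbs the quadratic and cubic differences. Once the matching is made explicit, the lemma is an essentially immediate corollary of Lemma \ref{L-SER}.
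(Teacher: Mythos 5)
Your proposal is correct and follows essentially the same route as the paper: recognize \eqref{T-LS2} as an instance of the auxiliary system \eqref{ER-SF} by matching the terms of the $\mathcal{K}_i$ against the templates $\mathscr{F}_1,\dots,\mathscr{F}_6$, verify the coefficient regularity from $\widetilde{m},z\in\mathcal{M}$ and $\widetilde{u},h\in L^2(0,T;H^1(\Omega))$, and invoke Lemma \ref{L-SER} to get existence, uniqueness, and $\eta_\rho\to0$ in $\mathcal{M}$ as $\rho\to0^+$. The only deviation is in bookkeeping: the paper keeps the $\rho$-free linear terms of $\mathcal{L}_{\widetilde{u}}$ on the left-hand side (handling them as in Lemma \ref{L-SLS}), whereas you move them to the right with $1/\rho$-scaled coefficients, which is legitimate precisely for the reason you note — the $\rho^2$ weights inside the exponent of $\mathscr{C}_{\mathrm{mod}}$ cancel those scalings, while the $\rho^2$ prefactor (built only from the unrescaled $\varphi_i$) still drives the bound, and hence $\eta_\rho$, to zero.
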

\noindent The proof of this lemma is given at the end of this subsection.

	For any feasible direction $h$ at $\widetilde{u}$, let us denote $u_{\rho}=\widetilde{u}+\rho \, h $ and  $m_{\rho}$ represents the state corresponding to the control $u_{\rho}$. From the solvability of systems \eqref{NLP}, \eqref{T-LS}, and \eqref{T-LS2}, it follows that $\widetilde{m}$, $z$, and $\eta_{\rho}$ are the unique regular solutions of these systems. Multiplying these equations respectively by $1$, $\rho$, and $\rho$, and then adding them together, we obtain an identity that can be directly compared with \eqref{NLP}. This yields the representation
		\begin{equation}\label{FO-E1}
	m_{\rho}=\widetilde{m} + \rho \, z + \rho \, \eta_{\rho}.		
		\end{equation}

%\begin{equation}\label{T-LS2}
%\ \ \ \ \ \ \ \begin{cases}
%		\mathcal{L}_{\widetilde{u}}\, \eta_\rho = \rho \, z \times \Delta \eta_\rho + \rho \, \eta_\rho \times \Delta z +\rho \, \eta_\rho \times \Delta \eta_\rho  + \rho \, \eta_\rho \times h -2\, \rho \, \big(\widetilde{m} \cdot z\big) \eta_\rho - 2\, \rho \, \big(\widetilde{m}\cdot \eta_\rho\big) z \\
%		\hspace{1cm}-\ 2\ \rho\, \big(\widetilde{m}\cdot \eta_\rho\big) \eta_\rho -2\ \rho \, \big(z\cdot \eta_\rho\big) \widetilde{m} - \rho^2 \, |z|^2 \eta_\rho -2 \rho^2 \, \big(z\cdot \eta_\rho\big)z -2 \rho^2 \big(z\cdot \eta_\rho\big)\eta_\rho -\rho\, |\eta_\rho|^2 \widetilde{m}  \\
%		\hspace{1cm} -\ \rho^2 \,  |\eta_\rho|^2z - \rho^2\, |\eta_\rho|^2 \eta_\rho +\rho \, z \times \Delta z +\rho \, z \times h - 2\, \rho \, \big(\widetilde{m} \cdot z\big)z - \rho\, |z|^2\widetilde{m} - \rho^2 \, |z|^2z\ \ \ \ \ \ \ \ \ \ \text{in} \ \ \Omega_T,\\
%		\frac{\partial \eta_\rho}{\partial \eta}=0 \ \ \ \ \text{on} \ \partial \Omega_T,\\
%		\eta_\rho(0)=0 \ \ \text{in} \ \Omega.
%	\end{cases}
%\end{equation}
\noindent Since $\eta_\rho\to0$ in $\mathcal{M}$, from \eqref{FO-E1} and the solvability of systems \eqref{T-LS} and \eqref{T-LS2}, it follows that
$$\frac{\|m_\rho-\widetilde{m}-\rho z\|_{\mathcal{M}}}{\rho}\to 0\ \ \ \ \ \ \text{as}\ \rho\to 0^+.$$
This establishes that the control-to-state map is Gateaux differentiable at $\widetilde{u}$ in the direction of $h$ , that is, $$z=D_um(\widetilde{u})[h].$$

\noindent \textbf{Step II:} \ For small enough $\rho>0$, $T^*(u_\rho)$ is well defined and lies in $(0,T)$.

	The solvability of system \eqref{T-LS}, \eqref{T-LS2} and the fact that $\eta_{\rho}\to 0$ in $\mathcal{M}$ ensures that $m_\rho$ converges to $\widetilde{m}$ in $W^{1,2}(0,T;H^3(\Omega),H^1(\Omega))$. This implies that $\|m_\rho-\widetilde{m}\|_{C(0,T;H^2(\Omega))}\to 0$ as $\rho \to 0^+$. Therefore, for any $\epsilon>0$, we can choose $\rho>0$ sufficiently small such that $\|m_\rho(t)-\widetilde{m}(t)\|_{L^2(\Omega)}< \epsilon$. 
	 
	Under assumptions \eqref{CON-1} and \eqref{CON-2}, there exists a time $\shat{T} \in (0,T)$ such that $\|\widetilde{m}(\shat{T})-m_{\Omega}\|_{L^2(\Omega)} < \delta $. Moreover, by continuity of $\widetilde{m}$, there exists $\sigma>0$ such that $\|\widetilde{m}(t)-\widetilde{m}(\shat{T})\|_{L^2(\Omega)}<\epsilon$ whenever $|t-\shat{T}|<\sigma$.  Therefore, by choosing $\epsilon <(\delta- \|\widetilde{m}(\shat{T})-m_{\Omega}\|_{L^2(\Omega)})/2$, we can deduce
		$$ 	\|m_\rho(t)-m_{\Omega}\|_{L^2(\Omega)} \leq \| m_\rho(t) -\widetilde{m}(t)\|_{L^2(\Omega)}+ \|\widetilde{m}(t)- \widetilde{m}(\shat{T})\|_{L^2(\Omega)} + \|\widetilde{m}(\shat{T})-m_{\Omega}\|_{L^2(\Omega)} < \delta,$$  
for $t\in(\shat{T}-\sigma,\shat{T}+\sigma)\subset (0,T)$ and $\rho>0$ sufficiently small. Consequently, continuity of $\|m_\rho(t)-m_{\Omega}\|_{L^2(\Omega)}$ guarantees the well-definedness of $T^*(u_\rho)$ for sufficiently small $\rho>0$. Moreover, by Lemma \ref{L-EQ}, we have $\|m_\rho(T^*(u_\rho))-m_{\Omega}\|_{L^2(\Omega)}=\delta$.

The well-definedness of $T_\rho:=T^*(u_\rho)$ is a consequence of step II. From this point forward, we will implicitly assume that $\rho$ is sufficiently small to ensure $T_{\rho}$ is well-defined.

		\noindent \textbf{Step III:} \ Next, we will show that $T_\rho\to \widetilde{T}$ as $\rho \to 0^+$.

Let us prove this by the method of contradiction. Assume that there exists a sub-sequence $T_{\rho_n}$ such that $T_{\rho_n}\to T^\#$ as $n \to \infty$, for some element $T^\# \neq \widetilde{T}$ in $[0,T]$. Note that $\rho_n$ is a particular sequential path along $\rho\to0^+$. Now, there will be two cases such that $T^\# < \widetilde{T}$ or $\widetilde{T}>T^\#$. 

\underline{Case I:} \ $T^\# <\widetilde{T}$. Since $\widetilde{m}$ is a continuous function from $[0,T]$ to $L^2(\Omega)$, for any $\epsilon >0$ there exists a number $n_1 \in \mathbb{N}$ such that $\|\widetilde{m}(T^\#)-\widetilde{m}(T_{\rho_n})\|_{L^2(\Omega)}<\epsilon$ for every $n\geq n_1$. Moreover, by the strong convergence $m_{\rho_n}\to \widetilde{m}$ in $C([0,T];L^2(\Omega))$, we can find a number $n_2\in \mathbb{N}$ such that $\|\widetilde{m}(T_{\rho_n})-m_{\rho_n}(T_{\rho_n})\|_{L^2(\Omega)} <\epsilon$ for every $n\geq n_2$. Finally, as 
$T_{\rho_n}=T^\#(u_{\rho_n})$ is minimal time for the control $u_{\rho_n}$, so we can write $\|m_{\rho_n}(T_{\rho_n})-m_{\Omega}\|_{L^2(\Omega)}=\delta$. Combining all these results, we can write
$$\|\widetilde{m}(T^\#)-m_{\Omega}\|_{L^2(\Omega)}\leq \|\widetilde{m}(T^\#)-\widetilde{m}(T_{\rho_n})\|_{L^2(\Omega)}+\|\widetilde{m}(T_{\rho_n})-m_{\rho_n}(T_{\rho_n})\|_{L^2(\Omega)}+\|m_{\rho_n}(T_{\rho_n})-m_{\Omega}\|_{L^2(\Omega)} < 2\epsilon+\delta$$
for $n\geq \max\{n_1,n_2\}$. Since $\epsilon>0$ is arbitrary, we get a time $T^\#<\widetilde{T}$ such that $\|\widetilde{m}(T^\#)-m_{\Omega}\|_{L^2(\Omega)}\leq \delta $. This is a contradiction, since $\widetilde{T}$ is the optimal time for the control $\widetilde{u}$.

\underline{Case II:}\ $T^\#>\widetilde{T}$. Because of assumption \eqref{CON-2}, $\|\widetilde{m}(t)-m_{\Omega}\|_{L^2(\Omega)}$ is strictly decreasing  in a neighbourhood of $\widetilde{T}$. Therefore, there exist constants $\epsilon>0$ and $\tau_1,\tau_2>0$ with $\tau_1<\tau_2<T^\#-\widetilde{T}$ such that $\|\widetilde{m}(t)-m_{\Omega}\|_{L^2(\Omega)}< \delta -\epsilon$ on the interval $[\widetilde{T}+\tau_1,\widetilde{T}+\tau_2]$. Moreover, since $\{m_{\rho_n}\}$ converges to $\widetilde{m}$ in $C([0,T];L^2(\Omega))$, there exists a constant $n_0$ depending on $\epsilon$ such that $\|m_{\rho_n}(t)-\widetilde{m}(t)\|_{L^2(\Omega)}< \epsilon$ for all $n \geq n_0$ and $t\in [0,T]$. Using all these results, we obtain the following inequality,
$$\|m_{\rho_n}(t)-m_{\Omega}\|_{L^2(\Omega)} \leq \|m_{\rho_n}(t)-\widetilde{m}(t)\|_{L^2(\Omega)}+\|\widetilde{m}(t)-m_{\Omega}\|_{L^2(\Omega)}<\delta\ \ \ \ \forall\ n\geq n_0\ \ \ \text{and}\ \ t\in [\widetilde{T}+\tau_1,\widetilde{T}+\tau_2], $$
which is a contradiction as $T^\#(u_{\rho_n})=T_{\rho_n} \to T^\#>\widetilde{T}+\tau_2$.

Therefore, there exists no subsequence of the set $\{T_\rho\}$ that converges to some element other than $\widetilde{T}$. Therefore, $T_\rho$ must converge to $\widetilde{T}$ as $\rho\to0^+$.

	\noindent \textbf{Step IV:} The operator $T^*:\rho \mapsto T^*(u_\rho)=T_\rho$ has a right derivative at $\rho=0$.

	Since $\big(\widetilde{m}_t(\widetilde{T})\ ,\  \widetilde{m}(\widetilde{T})-m_{\Omega} \big)_{L^2(\Omega)} \neq  0$ from assumption \eqref{CON-2}, we can choose a real number $ \mathcal{D}_{\widetilde{u}}\in \mathbb{R}$ such that the following equality holds:
	\begin{equation}\label{TD-1}
		\Big( z ( \widetilde{T}) +  \mathcal{D}_{\widetilde{u}}\ \widetilde{m}_t(\widetilde{T})\ ,\  \widetilde{m}(\widetilde{T})-m_{\Omega} \Big)_{L^2(\Omega)}=0.
	\end{equation}
In order to prove the right derivative of the operator $T^*$ at point $\widetilde{T}$, we will show that the remainder term $\Psi_{\rho}$ goes to $0$ in the following equality:
\begin{equation}\label{TD-2}
	T_\rho=\widetilde{T} +\rho \ \mathcal{D}_{\widetilde{u}} + \rho\ \Psi_\rho.
\end{equation} 
As $T_\rho$ is the minimal time for the control $u_\rho$, so using $\|m_\rho(T_\rho)-m_{\Omega}\|^2_{L^2(\Omega)}=\delta^2$ and \eqref{FO-E1}, we can write
$$\|\widetilde{m}(T_\rho)+\rho \ z(T_\rho)+ \rho\ \eta_\rho -m_{\Omega}\|^2=\delta^2.$$
By adding and subtracting $\widetilde{m}(\widetilde{T})$ inside the norm and using the equality $\|\widetilde{m}(\widetilde{T})-m_{\Omega}\|_{L^2(\Omega)}=\delta$, we get
\begin{align}\label{FO-E2}
\frac{1}{\rho}	\Big(\widetilde{m}(T_\rho)&-\widetilde{m}(\widetilde{T})\,,\,\widetilde{m}(T_\rho)+\widetilde{m}(\widetilde{T}\Big)-2m_{\Omega})_{L^2(\Omega)}+2\ \Big(z(T_\rho)+\eta_\rho(T_\rho)\,,\,\widetilde{m}(\widetilde{T})-m_{\Omega}\Big)_{L^2(\Omega)}\nonumber\\
	&= -\rho\ \|z(T_\rho)+\eta_\rho(T_\rho)\|^2_{L^2(\Omega)} + 2\ \Big(\widetilde{m}(T_\rho)-\widetilde{m}(\widetilde{T})\,,\,z(T_\rho)+\eta_\rho(T_\rho)\Big)_{L^2(\Omega)}.
\end{align}
By multiplying and dividing $T_\rho-\widetilde{T}$ in the first term and using the equality \eqref{TD-2}, we have
%and then taking limit $ \rho\to 0^+$, it becomes
%\frac{T_\rho-\widetilde{T}}{\rho}\ \bigg(\frac{\widetilde{m}(T_\rho)-\widetilde{m}(\widetilde{T})}{T_\rho-\widetilde{T}},\widetilde{m}(T_\rho)+\widetilde{m}(\widetilde{T})-2m_{\Omega}\bigg)_{L^2(\Omega)} = 
$$\big(\mathcal{D}_{\widetilde{u}}+\Psi_\rho\big)\  \bigg(\frac{\widetilde{m}(T_\rho)-\widetilde{m}(\widetilde{T})}{T_\rho-\widetilde{T}}\ ,\ \widetilde{m}(T_\rho)+\widetilde{m}(\widetilde{T})-2m_{\Omega}\bigg)_{L^2(\Omega)}.$$
By substituting this into \eqref{FO-E2}, letting $\rho\to 0^+$ and using the equality $\displaystyle \lim_{\rho\to0^+} \frac{\widetilde{m}(T_\rho)-\widetilde{m}(\widetilde{T})}{T_\rho-\widetilde{T}}=\widetilde{m}_t(\widetilde{T})$, along with the convergence $T_\rho\to \widetilde{T}$ established in Step III and using \eqref{TD-1}, we obtain
$$\lim_{\rho \to 0^+} \Psi_{\rho} = - \frac{\Big( z ( \widetilde{T}) +  \mathcal{D}_{\widetilde{u}}\ \widetilde{m}_t(\widetilde{T})\,,\, \widetilde{m}(\widetilde{T})-m_{\Omega} \Big)_{L^2(\Omega)}}{\Big(\widetilde{m}(\widetilde{T})-m_{\Omega} \,,\, \widetilde{m}_t(\widetilde{T})\Big)_{L^2(\Omega)}}=0,$$
where we have also used the continuity of $\widetilde{m}$ in $C([0,T];H^1(\Omega))$. Thus from \eqref{TD-2}, it is clear that implies $T^*$ has a right derivative at $\rho=0$, which is written as
$$\mathcal{D}_{\widetilde{u}}=D_uT^*(\widetilde{u})[h].$$

	\noindent \textbf{Step V:} Finally, we will establish the first-order necessary optimality condition using the above four steps. 
	
	Note that, since $\widetilde{u}$ is an optimal control for (TOCP) and $\widetilde{T}$ is the optimal time associated with the control $\widetilde{u}$, it follows that 
	\begin{equation*}
		\frac{\mathcal{J}(u_\rho)-\mathcal{J}(\widetilde{u})}{\rho}= \frac{1}{2}\frac{(T_\rho-\widetilde{T})}{\rho}(T_\rho+\widetilde{T}) + \big(\big(\widetilde{u},h\big)\big)_{W^{1,2}(0,T;H^1(\Omega),H^1(\Omega)^*)} +\frac{1}{2} \rho \ \|h\|^2_{W^{1,2}(0,T;H^1(\Omega),H^1(\Omega)^*)}\geq 0.
	\end{equation*}
By taking $\rho\to 0^+$ and using the fact that $\lim_{\rho\to 0^+} \frac{T_\rho-\widetilde{T}}{\rho}=\mathcal{D}_{\widetilde{u}}$ from equation \eqref{TD-2}, we obtain a primitive form of the first-order variational inequality as follows:
\begin{equation}\label{FO-E3}
	\mathcal{D}_{\widetilde{u}} \ \widetilde{T} + \big(\big(\widetilde{u},h\big)\big)_{W^{1,2}(0,T;H^1(\Omega),H^1(\Omega)^*)}\geq 0.
\end{equation}
Considering the $L^2$ inner product of system \eqref{T-LS} with $\phi$ and doing integration by parts of some of the terms, we get
	\begin{align}\label{FO-E4}
	\int_{0}^{\widetilde{T}} &\big( z_t,\phi\big)_{L^2(\Omega)} \ dt +\int_{\Omega_{\widetilde{T}}} \nabla z \cdot \nabla \phi \ dx\ dt- \int_{\Omega_{\widetilde{T}}} \big(z \times \Delta \widetilde{m} \big)\cdot \phi \ dx\ dt + \int_{\Omega_{\widetilde{T}}}  \nabla z \cdot \nabla \big(\phi \times \widetilde{m}\big)  \ dx\ dt  \nonumber\\
	& - \int_{\Omega_{\widetilde{T}}}(z \times \widetilde{u})\cdot \phi \ dx\  dt +2  \int_{\Omega_{\widetilde{T}}}\big(\widetilde{m}\cdot z\big)\widetilde{m} \cdot \phi \ dx\ dt +\int_{\Omega_{\widetilde{T}}}  \big(1+|\widetilde{m}|^2\big) z \cdot \phi \ dx\ dt\nonumber\\
	&=\int_{\Omega_{\widetilde{T}}}(\widetilde{m}\times h)\cdot \phi \ dx\ dt + \int_{\Omega_{\widetilde{T}}} h \cdot \phi \ dx\ dt. \ \ \ \ \ \ 
\end{align}
Furthermore, substituting $\vartheta=z$ in the weak formulation \eqref{WEFO} of the adjoint system \eqref{AS} and using the equality
	\begin{equation*}
\int_0^{\widetilde{T}} \big\langle\phi_t, z\big\rangle_{H^1(\Omega)^*\times H^1(\Omega)} \ dt= -	\int_{0}^{\widetilde{T}} \big( z_t,\phi\big) \ dt + \int_\Omega \phi(\widetilde{T}) \cdot z(\widetilde{T})\ dx,
\end{equation*}
we obtain
\begin{align}\label{FO-E6}
	&-	\int_{0}^{\widetilde{T}} \big( z_t,\phi\big) \ dt  - \int_{\Omega_{\widetilde{T}}} \nabla \phi \cdot \nabla  z\ dx \ dt- \int_{\Omega_{\widetilde{T}}}\nabla (\phi \times \widetilde{m})\cdot \nabla  z\ dx \ dt\nonumber\\
	&\hspace{1cm} + \int_{\Omega_{\widetilde{T}}}(\Delta \widetilde{m}\times \phi)\cdot z\ dx\ dt -\int_{\Omega_{\widetilde{T}}} \big(\phi \times \widetilde{u}\big)\cdot z\ dx\ dt- \int_{\Omega_{\widetilde{T}}} \left( 1 + |\widetilde{m}|^2 \right) \phi\cdot z\  dx\ dt\nonumber\\
	&\hspace{1cm} -2\int_{\Omega_{\widetilde{T}}} \big((\widetilde{m} \cdot \phi)\ \widetilde{m}\big)\cdot z \ dx\ dt =- \int_\Omega \phi (\widetilde{T}) \cdot z(\widetilde{T})\ dx,
\end{align}
	with $\phi(\widetilde{T})$ as given in Definition \ref{AWSD}. Finally, using the vector identities
% $\int_{\Omega_{\widetilde{T}}} \big(\Delta \widetilde{m}\times \phi\big)\cdot z\ dx\ dt=\int_{\Omega_{\widetilde{T}}} \big(\Delta \widetilde{m}\times \phi\big)\cdot z\ dx\ dt$
	$a\cdot(b\times c)=-c\cdot (b\times a)$, combining equations \eqref{FO-E4} and \eqref{FO-E6}, and applying the equality \eqref{TD-1}, we obtain
	\begin{align}\label{FO-E7}
		\int_{\Omega_{\widetilde{T}}} \big(\phi \times \widetilde{m}\big)\cdot h\ dx\ dt &+ \int_{\Omega_{\widetilde{T}}} \phi \cdot h\ dx\ dt = \int_{\Omega} \phi(\widetilde{T})\cdot z(\widetilde{T}) \ dx\nonumber\\
		&= -\int_{\Omega} \frac{\widetilde{T}\ \big(\widetilde{m}(\widetilde{T})-m_{\Omega}\big)}{\big(\widetilde{m}(\widetilde{T})-m_{\Omega},\widetilde{m}_t(\widetilde{T})\big)_{L^2(\Omega)}}\cdot z(\widetilde{T})\ dx= \mathcal{D}_{\widetilde{u}}\ \widetilde{T}.
	\end{align}
Then by substituting the above identity in \eqref{FO-E3}, we obtain that for every feasible direction $h$ at $\widetilde{u}$ the following inequality holds:
$$\int_{\Omega_{\widetilde{T}}} \big(\phi \times \widetilde{m}\big)\cdot h\ dx\ dt + \int_{\Omega_{\widetilde{T}}} \phi \cdot h\ dx\ dt +\big(\big(\widetilde{u},h\big)\big)_{W^{1,2}(0,T;H^1(\Omega),H^1(\Omega)^*)}\geq 0.$$
Finally, since the set of feasible directions $\mathcal{F}_{\mathcal{U}_{ad}}(\widetilde{u})$ is dense in $\mathcal{P}_{\mathcal{U}_{ad}}(\widetilde{u})$, the desired result \eqref{FOOC} follows.
\end{proof}
	
%\textcolor{red}{Furthermore, since $u-\widetilde{u}$ is also a feasible direction}
	
%		\begin{equation}\label{FOOC-U}
%			\int_{\Omega_{\widetilde{T}}} \big(\phi + \phi \times \widetilde{m}\big)\cdot \big(u-\widetilde{u}\big)\ dx\ dt  +\big(\widetilde{u},u-\widetilde{u}\big)_{W^{1,2}(0,T;H^1(\Omega),H^1(\Omega)^*)}\geq 0, \ \ \ \ \  \forall \ u \in \mathcal{U}_{ad}.	
%	\end{equation}
	
%	\textcolor{red}{Things to correct: $H^1(\Omega)^*$ inner product symbol . make $<>$ everywhere.}

\begin{proof}[Proof of Lemma \ref{PAN-1}]
	By analyzing the systems \eqref{ER-SF} and \eqref{T-LS2}, we can find that each term of $\mathcal{K}_i$ belongs to one of the class of $\mathcal{F}_i$'s. To be specific, we will compare the first expressions $\mathcal{K}_1$ with $\mathcal{F}_1$. The first two terms $z\times \Delta z$ and $ z \times h$ resembles $\varphi_2 \times \Delta \varphi_3$ and $\varphi_4 \times \varphi_5$, respectively. Furthermore, the final three terms of $\mathcal{K}_1$, that is, $\big(\widetilde{m} \cdot z\big)z$, $|z|^2\widetilde{m}$ and  $|z|^2z$ are similar to the term $\big(\varphi_6\cdot \varphi_7\big)\, \varphi_8$ of $\mathcal{F}_1$.
	
	Similarly, proceeding for the rest of the $\mathcal{K}_i$ terms, we get that each term of $\mathcal{K}_i$'s finds resemblance with one of the terms of the corresponding $\mathcal{F}_i$ expression in equation \eqref{ER-SF}. Moreover, since  $h \in L^2(0,T;H^1(\Omega))$ and $\widetilde{m},z\in \mathcal{M}$, the regularity requirements by the functions $\varphi_i$'s, $\psi_i$'s, $\vartheta_i$'s, $\zeta_i$'s in Lemma \ref{L-SER} are automatically satisfied. 
	%Moreover, upon comparison of the remaining $\mathcal{K}_i$ terms in the system \eqref{T-LS2} with the corresponding $\mathcal{F}_i$ terms on the right-hand side of equation \eqref{ER-SF}, a similar structure correspondence is observed.
	
	Finally, by applying Lemma \ref{L-SER}, it can be established that the system \eqref{T-LS2} admits a unique regular solution for sufficiently small $\rho$. Furthermore, by deriving the energy estimate analogous to Lemma \ref{L-SER}, we can find that $\eta_\rho\to 0$ in $\mathcal{M}$ as $\rho \to 0^+$. 
\end{proof}

\section{Second-Order Optimality Condition}\label{SEC-SO}

The following lemma will be essential in proving the second-order optimality condition, as it establishes the convergence of the time derivative of the solution to the linearized system.
\begin{Lem}\label{Z-K}
For any $\widetilde{m}\in \mathcal{M}$ and $\widetilde{u},h_k \in \mathcal{U}$, let $z_k\in \mathcal{M}$ be the regular solution of the following linearized system:
\begin{equation}\label{Z-KK}
	z_{kt}=\Delta z_k +z_k\times \Delta \widetilde{m} +\widetilde{m}\times \Delta z_k + z_k \times \widetilde{u}-2\ (\widetilde{m}\cdot z_k)\ \widetilde{m}- \left(1+|\widetilde{m}|^2\right)z_k+ h_k+ \widetilde{m} \times h_k.
\end{equation}
If $z_k$ converges strongly to $z$ in $L^2(0,T;H^2(\Omega))\cap C([0,T];H^1(\Omega))$, and $h_k$ converges to $h$ strongly in $C([0,T];H^1(\Omega)^*)$ then $z_{kt}$ converges to $z_t$ strongly in $C([0,T];H^1(\Omega)^*)$ space. 

\end{Lem}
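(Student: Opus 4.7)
The plan is to write the equation satisfied by the difference $w_k:=z_k-z$, namely
\begin{equation*}
w_{kt}=\Delta w_k + w_k\times \Delta \widetilde m + \widetilde m\times \Delta w_k + w_k\times \widetilde u -2(\widetilde m\cdot w_k)\widetilde m - (1+|\widetilde m|^2)w_k + (h_k-h) + \widetilde m\times(h_k-h),
\end{equation*}
and to estimate each summand on the right-hand side in the $H^1(\Omega)^*$ norm uniformly in $t\in[0,T]$. Taking $\sup_t$ of the resulting bound and invoking the hypothesized convergences $w_k\to 0$ in $C([0,T];H^1(\Omega))$ and $h_k\to h$ in $C([0,T];H^1(\Omega)^*)$ will yield the conclusion. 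Recall from the earlier regularity statements that $\widetilde m\in L^\infty(0,T;H^2(\Omega))$ with $\tfrac{\partial \widetilde m}{\partial \eta}=0$, and $\widetilde u\in C([0,T];L^2(\Omega))$ (Remark \ref{MT-WPN}), while $w_k$ inherits the Neumann boundary condition from $z_k,z\in \mathcal M$.

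The routine terms are handled by duality and H\"older together with the embeddings $H^1(\Omega)\hookrightarrow L^p(\Omega)$ (for $p\in[1,6]$ in dimensions $1,2,3$) and $H^2(\Omega)\hookrightarrow L^\infty(\Omega)$. For instance, $\|(1+|\widetilde m|^2)w_k\|_{L^2}\lesssim (1+\|\widetilde m\|_{H^2}^2)\|w_k\|_{L^2}$, and $\|(\widetilde m\cdot w_k)\widetilde m\|_{L^2}\lesssim \|\widetilde m\|_{H^2}^2\|w_k\|_{L^2}$ both embed into $H^1(\Omega)^*$. For $w_k\times \Delta \widetilde m$ one uses $\|w_k\times \Delta\widetilde m\|_{L^{6/5}}\le \|w_k\|_{L^3}\|\Delta \widetilde m\|_{L^2}$ together with $L^{6/5}\hookrightarrow H^1(\Omega)^*$, and for $w_k\times \widetilde u$ one similarly pairs $\|w_k\|_{L^3}$ with $\|\widetilde u\|_{L^2}$, using continuity of $\widetilde u$ in $L^2(\Omega)$. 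The term $\widetilde m\times(h_k-h)$ is handled by moving the cross product onto the test function: $\langle \widetilde m\times(h_k-h),v\rangle = -\langle h_k-h,\widetilde m\times v\rangle$, bounded by $\|h_k-h\|_{H^1(\Omega)^*}\|\widetilde m\times v\|_{H^1(\Omega)}\lesssim \|\widetilde m\|_{H^2}\|h_k-h\|_{H^1(\Omega)^*}\|v\|_{H^1}$.

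The main obstacle is the term $\widetilde m\times \Delta w_k$, because we only know $\Delta w_k\to 0$ in the space-time norm $L^2(0,T;L^2(\Omega))$ and not uniformly in $t$. The trick is to pair it with a test function $v\in H^1(\Omega)$, use the cyclic identity $(\widetilde m\times \Delta w_k)\cdot v=\Delta w_k\cdot(v\times \widetilde m)$, and integrate by parts; the Neumann boundary term vanishes because $\tfrac{\partial w_k}{\partial \eta}=0$, leaving
\begin{equation*}
\langle \widetilde m\times \Delta w_k,v\rangle = -\int_{\Omega} \nabla w_k:\nabla(v\times \widetilde m)\,dx,
\end{equation*}
which is controlled by $C\|w_k\|_{H^1}\|\widetilde m\|_{H^2}\|v\|_{H^1}$ via H\"older and the usual Sobolev embeddings. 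This trade moves one derivative off $w_k$ and onto $v$ (plus $\widetilde m$), which is precisely why $C([0,T];H^1(\Omega))$ convergence of $w_k$ is enough.

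Assembling these estimates gives
\begin{equation*}
\|z_{kt}-z_t\|_{H^1(\Omega)^*}\le C(\widetilde m,\widetilde u)\bigl(\|w_k(t)\|_{H^1(\Omega)}+\|(h_k-h)(t)\|_{H^1(\Omega)^*}\bigr),
\end{equation*}
uniformly in $t\in[0,T]$, where $C(\widetilde m,\widetilde u)$ depends on $\|\widetilde m\|_{L^\infty(0,T;H^2)}$ and $\|\widetilde u\|_{C([0,T];L^2)}$. Taking $\sup_{t\in[0,T]}$ and passing to the limit $k\to\infty$ delivers $z_{kt}\to z_t$ in $C([0,T];H^1(\Omega)^*)$, which completes the argument.
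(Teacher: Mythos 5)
Your proposal is correct and follows essentially the same route as the paper's proof: term-by-term $H^1(\Omega)^*$-duality estimates, uniform in $t$, integrating by parts on the Laplacian terms (using the Neumann condition) and moving the cross product onto the test function for the $\widetilde m\times(h_k-h)$ term, then passing to the limit with the assumed convergences of $z_k$ and $h_k$. Working with the difference $w_k=z_k-z$ and bounding $w_k\times\Delta\widetilde m$ via $L^{6/5}(\Omega)\hookrightarrow H^1(\Omega)^*$ instead of a further integration by parts are only cosmetic variations on the paper's argument.
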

\begin{proof}[Proof of Lemma \ref{Z-K}]
	We will prove the convergence of $z_{kt}$ by showing that each term on the right-hand side of \eqref{Z-KK} converges to the respective terms of \eqref{T-LS} in $C([0,T];H^1(\Omega)^*)$ space. 	
	
	From Lemma \ref{L-SLS}, it is clear that $z_k\in W^{1,2}(0,T;H^3(\Omega),H^1(\Omega))$, and therefore, there exists a function $z\in \mathcal{M}$ such that $z_k\to z$ strongly in $L^2(0,T:H^2(\Omega)) \cap C([0,T];H^1(\Omega))$.
	
	Let us start with the first term on the right-hand side of \eqref{Z-KK}. Using the definition of dual norm and integration by parts, we obtain
	\begin{flalign*}
		\|\Delta z_k-\Delta z\|_{C([0,T];H^1(\Omega)^*)}& = \max_{t\in[0,T]} \ \sup_{\|v\|_{H^1(\Omega)=1}} \big|\big\langle \Delta z_k(t)-\Delta z(t), v\big\rangle_{H^1(\Omega)^*\times H^1(\Omega)}\big|&\\
		& 	= \max_{t\in[0,T]} \ \sup_{\|v\|_{H^1(\Omega)=1}} \big|\big( \nabla z_k(t)-\nabla z(t), \nabla v\big)_{L^2(\Omega)}\big|\\
		& 	\leq  \max_{t\in[0,T]} \ \sup_{\|v\|_{H^1(\Omega)=1}} \big\|\nabla  z_k(t)-\nabla z(t)\big\|_{L^2(\Omega)}\ \|\nabla v\|_{L^2(\Omega)} 	\leq  \max_{t\in[0,T]} \ \big\|z_k(t)-z(t)\big\|_{H^1(\Omega)}.
	\end{flalign*}
	For the second and fifth term, applying H\"older's inequality, and the cross product property from Lemma \ref{CPP}, followed by the embedding $H^1(\Omega) \hookrightarrow L^4(\Omega)$, we get 
	\begin{flalign*}
		\|(z_k-z)\times &\Delta \widetilde{m}\|_{C([0,T];H^1(\Omega)^*)}  = \max_{t\in[0,T]} \ \sup_{\|v\|_{H^1(\Omega)=1}} \big|\big\langle  \big(z_k(t)-z(t)\big)\times \Delta \widetilde{m}(t), v\big\rangle_{H^1(\Omega)^*\times H^1(\Omega)}\big|&\\
		& 	= \max_{t\in[0,T]} \ \sup_{\|v\|_{H^1(\Omega)=1}} \big|\big( \nabla \widetilde{m}(t), \nabla \big(v\times (z_k(t)-z(t))\big) \big)_{L^2(\Omega)}\big|\\
		& 	\leq  \max_{t\in[0,T]} \ \sup_{\|v\|_{H^1(\Omega)=1}} \ \|\nabla \widetilde{m}(t)\|_{L^4(\Omega)} \ \Big( \|\nabla v\|_{L^2(\Omega)} \ \| z_k(t)-z(t)\|_{L^4(\Omega)}+ \|v\|_{L^4(\Omega)} \ \|\nabla (z_k(t)-z(t))\|_{L^2(\Omega)}\Big)\\
		&\leq C\  \|\widetilde{m}\|_{L^\infty(0,T;H^2(\Omega))}\ \max_{t\in[0,T]} \ \big\|z_k(t)-z(t)\big\|_{H^1(\Omega)},
	\end{flalign*}
	and 	
	\begin{flalign*}
		\|\big(\widetilde{m}\cdot (z_k-z)\big)\ \widetilde{m}\|_{C([0,T];H^1(\Omega)^*)}  &= \max_{t\in[0,T]} \ \sup_{\|v\|_{H^1(\Omega)=1}} \big|\big\langle  \big(\widetilde{m}\cdot (z_k-z)\big)\ \widetilde{m}, v\big\rangle_{H^1(\Omega)^*\times H^1(\Omega)}\big|&\\
		& 	\leq  \max_{t\in[0,T]} \ \sup_{\|v\|_{H^1(\Omega)=1}} \ \|\widetilde{m}(t)\|^2_{L^4(\Omega)} \  \|z_k(t)-z(t)\|_{L^4(\Omega)} \ \|v\|_{L^4(\Omega)}\\
		&\leq  \|\widetilde{m}\|^2_{L^\infty(0,T;H^2(\Omega))}\ \max_{t\in[0,T]} \ \big\|z_k(t)-z(t)\big\|_{H^1(\Omega)}.
	\end{flalign*}
	Again proceeding for the final term $\widetilde{m}\times h_k$, using the dual norm definition and the strong convergence of $h_k$ to $h$ in $C([0,T];H^1(\Omega)^*)$, we derive
	\begin{flalign*}
		\|\widetilde{m}\times (h_k-h)\|_{C([0,T];H^1(\Omega)^*)}  &= \max_{t\in[0,T]} \ \sup_{\|v\|_{H^1(\Omega)=1}} \big|\big\langle   \widetilde{m}(t)\times \big( h_k(t)-h(t)\big), v\big\rangle_{H^1(\Omega)^*\times H^1(\Omega)}\big|&\\
		& 	= \max_{t\in[0,T]} \ \sup_{\|v\|_{H^1(\Omega)=1}} \big|\big\langle   h_k(t)-h(t), v\times \widetilde{m}(t)\big\rangle_{H^1(\Omega)^*\times H^1(\Omega)}\big|\\
		& 	\leq  \max_{t\in[0,T]} \ \sup_{\|v\|_{H^1(\Omega)=1}} \ \|h_k(t)-h(t)\|_{H^1(\Omega)^*}\ \|v\times \widetilde{m}(t)\|_{H^1(\Omega)}\\
		& 	\leq  \max_{t\in[0,T]} \ \sup_{\|v\|_{H^1(\Omega)=1}} \ \|h_k(t)-h(t)\|_{H^1(\Omega)^*}\ \|v\|_{H^1(\Omega)}\ \|\widetilde{m}(t)\|_{H^2(\Omega)}\\
		&\leq  \|\widetilde{m}\|_{L^\infty(0,T;H^2(\Omega))}\ \max_{t\in[0,T]} \ \big\|h_k(t)-h(t)\big\|_{H^1(\Omega)^*}
	\end{flalign*}
	The remaining terms can be estimated in a similar fashion. By letting $k\to \infty$ and applying the strong convergence of $z_k$ in $C([0,T];H^1(\Omega))$ and $h_k$ in $C([0,T];H^1(\Omega)^*)$, the claim follows. This concludes the proof.
\end{proof}	

Let us now focus on the second-order optimality conditions for time-optimal control problems (TOCP), particularly when the control system is governed by a nonlinear parabolic equation. These problems are especially challenging due to the nonlinearity of the state equation, placing them in the realm of nonconvex optimization. The lack of convexity introduces significant difficulties in deriving optimality conditions, as the structural properties often used in linear problems no longer apply. This makes the analysis of nonlinear systems much more complex. Only a few studies (see, e.g., \cite{CTA, VB1, VB2, EFC, KKWL}) have addressed time-optimality conditions for such problems, highlighting the unique challenges introduced by the nonlinear nature of the state equation.

\begin{proof}[Proof of Theorem \ref{T-SOOC}]
	We will establish this theorem by the method of contradiction, which will be structured in three steps.
	
	\noindent \textbf{Step I:}	 Assume, for the sake of contradiction, that $\widetilde{u}$ does not fulfill the growth condition specified by inequality \eqref{SO-GC}. In this case, there must exists a sequence of control functions $\{u_k\}_{k=1}^\infty$ within $\mathcal{U}_{ad}$ such that $u_k\to \widetilde{u}$ in $W^{1,2}(0,T;H^1(\Omega),H^1(\Omega)^*)$ such that

	\begin{equation}\label{CQGC}
		\mathcal{J}(u_k) < \mathcal{J}(\widetilde{u}) + \frac{1}{k}\ \|u_k-\widetilde{u}\|^2_{W^{1,2}(0,T;H^1(\Omega),H^1(\Omega)^*)}\ \ \ \ \forall \ k.
	\end{equation}

	Define  \ $\rho_k:=\|u_k-\widetilde{u}\|_{W^{1,2}(0,T;H^1(\Omega),H^1(\Omega)^*)}$ and $h_k:=\frac{1}{\rho_k}\ (u_k-\widetilde{u})$. Given that $\|h_k\|_{W^{1,2}(0,T;H^1(\Omega),H^1(\Omega)^*)}=1$, we can select a subsequence, for simplicity again denoted as $\{h_k\}$, such that 
	\begin{align}\label{CON-H1}
		h_k &\rightharpoonup h\ \text{ weakly in}\  L^2(0,T;H^1(\Omega)),\nonumber\\
		\text{and}\ \ \ (h_k)_t &\overset{*}{\rightharpoonup} h_t\ \text{ weak$^*$ in}\  L^2(0,T;H^1(\Omega)^*),
	\end{align}
	for some element $h\in W^{1,2}(0,T;H^1(\Omega),H^1(\Omega)^*)$.  Moreover, as a result of the continuous embedding $W^{1,2}(0,T;H^1(\Omega),H^1(\Omega)^*) \hookrightarrow C([0,T];L^2(\Omega))$  (see \cite{LCE}, Subsection 5.9.2), it follows that $\{h_k\}$ is uniformly bounded in $C([0,T];L^2(\Omega))$. Moreover, implementing the Aubin-Lions-Simon Lemma (see Corollary 4,\cite{JS}), we have
	\begin{equation}\label{CON-H2}
	h_k \to h \  \text{strongly in}\ L^2(0,T;L^2(\Omega))\cap C([0,T];H^1(\Omega)^*).	
	\end{equation}
		
	Suppose $z_k$ and $z$ are the unique regular solutions of the linearized system \eqref{T-LS} with the right-hand side terms being replaced by $h_k + \widetilde{m}\times h_k$ and $h+\widetilde{m}\times h$, respectively. Since $z_k$ is the unique regular solution, through the energy estimate \eqref{LSSE}, we find $\|z_k\|_{W^{1,2}(0,T;H^3(\Omega),H^1(\Omega))}\leq C\ \|h_k\|_{L^2(0,T;H^1(\Omega))}\leq C$. Using this and the compact embedding of $W^{1,2}(0,T;H^3(\Omega),H^1(\Omega))$ in  $L^2(0,T;H^2(\Omega))$, we find a subsequence (again represented as $z_k$) such that 
	\begin{equation}\label{CON-Z1}
		z_k \ \text{converges strongly to}\ \widetilde{z} \ \text{in}\ L^2(0,T;H^2(\Omega)).
	\end{equation}	
	Furthermore, as $z_k$ is bounded in $L^\infty(0,T;H^2(\Omega))$ and $(z_k)_t$ is bounded in $L^2(0,T;H^1(\Omega))$, therefore by the compactness result from Corollary 4 of \cite{JS}, extraction of a subsequence (again represented as $z_k$) is possible such that the following convergence holds:
	\begin{equation}\label{CON-Z}
	z_k \to z \ \text{strongly in}\ C([0,T];H^1(\Omega)).
	\end{equation}
	\noindent Now, consider the following linear system:	
	\begin{equation}\label{XI-LS}
		\begin{cases}
			\begin{array}{l}
				\mathcal{L}_{\widetilde{u}}\xi_k=2\,  z_k\times \Delta z_k +2\, z_k\times h_k-4\,\big(z_k\cdot \widetilde{m}\big)\,z_k-2\,|z_k|^2\, \widetilde{m}  \ \ \ \ \text{in}\ \Omega_T,\\
				\frac{\partial \xi_k}{\partial \eta}=0 \ \ \ \ \ \  \text{in}\ \partial \Omega_T, \ \ \ \xi_k(x,0)=0\ \ \text{in}\ \Omega,
			\end{array}
		\end{cases}	
	\end{equation}	
	where, the operator $\mathcal{L}_{\widetilde{u}}$ is defined in \eqref{CLO}. Since $h_k \in L^2(0,T;H^1(\Omega))$ and $ \widetilde{m}, z_k \in L^2(0,T;H^2(\Omega)) \cap L^{\infty}(0,T;H^2(\Omega))$, therefore, using inequalities \eqref{EE-2}, \eqref{EE-3} and \eqref{EE-6} from Lemma \ref{PROP2}, we get that each term of on the right-hand side of \eqref{XI-LS} belongs to $L^2(0,T;H^1(\Omega))$. Finally, it follows from Lemma \ref{L-SLS} that the system \eqref{XI-LS} admits a unique regular solution $\xi_k$ in $\mathcal{M}$. 
	
 Since $\xi_k$ is the regular solution to the system \eqref{XI-LS}, where the right-hand side variables $z_k$ and $h_k$ converges to $z$ and $h$, respectively, as given in \eqref{CON-H1}, \eqref{CON-H2}, \eqref{CON-Z1} and \eqref{CON-Z}, therefore proceeding analogously as we did for $z_k$, we find that 
 \begin{equation}\label{CON-XI}
	 	\xi_k \to \xi\ \ \text{strongly in}\ L^2(0,T;H^2(\Omega)) \cap C([0,T];H^1(\Omega)).
 \end{equation}

 \begin{Lem}\label{L-PAN-XI}
 	Let $\widetilde{u}, h_k \in L^2(0,T;H^1(\Omega))$ and $\widetilde{m}, z_k, \xi_k \in L^2(0,T;H^3(\Omega))\cap L^{\infty}(0,T;H^2(\Omega))$. For any constant $\rho_k>0$, we consider the following non-linear system:
	\begin{equation}\label{ER-SY}
		\begin{cases}
			\begin{array}{l}
				\mathcal{L}_{\widetilde{u}}\zeta_k=\displaystyle \sum_{i=1}^6 \mathcal{X}_i, \\
				\frac{\partial \zeta_k}{\partial \eta}=0 \ \ \ \ \  \text{in}\ \partial \Omega_T, \ \ \ \ \ \ \  \ \ \zeta_k(x,0)=0\ \ \text{in}\ \Omega,
			\end{array}
		\end{cases}	
	\end{equation}
	where $\mathcal{L}_{\widetilde{u}}$ is defined in \eqref{CLO}, and the terms $\mathcal{X}_i$ are given by 
\begin{align*}
\mathcal{X}_1 &= \frac{\rho_k}{2}\, z_k\times \Delta \xi_k +\frac{\rho_k}{2}\, \xi_k\times \Delta z_k + \frac{\rho_k^2}{4} \, \xi_k \times \Delta \xi_k  +\frac{\rho_k}{2}\, \xi_k \times h_k   - \,\rho_k\  \big(z_k\cdot \xi_k\big)\, \widetilde{m}   - \rho_k\,  \big(\widetilde{m}\cdot  \xi_k\big)\, z_k  \\
&\hspace{0.6cm} - \rho_k^2\, \big(z_k\cdot \xi_k\big)\,z_k -\, \rho_k\, \big(\widetilde{m}\cdot z_k\big)\,\xi_k - \frac{\rho_k^2}{2}\, \big(\widetilde{m}\cdot \xi_k\big)\,\xi_k - \frac{\rho_k^3}{2}\, \big(z_k\cdot \xi_k\big)\,\xi_k -\frac{\rho_k^2}{4}\, \big|\xi_k\big|^2\ \widetilde{m}  -\, \rho_k\, \big|z_k\big|^2\, z_k \\
&\hspace{0.6cm} - \frac{\rho_K^3}{4} \, \big|\xi_k\big|^2\,z_k  - \frac{\rho_k^2}{2}\, \big|z_k\big|^2\, \xi_k -\, \frac{\rho_k^4}{8}\, \big|\xi_k\big|^2\,\xi_k,\\
\mathcal{X}_2 &=  \rho_k\, z_k\times \Delta \zeta_k +\frac{\rho_k^2}{2} \,\xi_k \times \Delta \zeta_k + \rho_k\,  \zeta_k \times \Delta z_k  +\, \frac{\rho_k^2}{2}\, \zeta_k \times \Delta \xi_k   + \rho_k^2\, \zeta_k \times \Delta \zeta_k,\\
\mathcal{X}_3 &=  \rho_k\, \zeta_k \times \widetilde{u} + \rho_k \,\zeta_k \times h_k,\\ 
\mathcal{X}_4&= -\,  2\rho_k\,  \big(z_k\cdot \zeta_k\big)\, \widetilde{m}- \rho_k^2\, \big(\xi_k\cdot \zeta_k\big)\, \widetilde{m} -2\rho_k\, \big(\widetilde{m} \cdot \zeta_k\big)\,z_k  - 2\rho_k^2\, \big(z_k\cdot \zeta_k\big)\,z_k- \rho_k^3\, \big(\xi_k \cdot \zeta_k\big)\, z_k  \\
&\hspace{0.6cm} -\rho_k^2\, \big(\widetilde{m}\cdot \zeta_k\big)\, \xi_k - \rho_k^3\, \big(z_k\cdot \zeta_k\big)\, \xi_k  - \frac{\rho_k^4}{2}\, \big(\xi_k\cdot \zeta_k\big)\,\xi_k   -2\rho_k\, \big(\widetilde{m} \cdot z_k\big)\,\zeta_k - \rho_k^2\, \big(\widetilde{m}\cdot \xi_k\big)\,\zeta_k \\
&\hspace{0.6cm}  - \rho_k^3\, \big(z_k\cdot \xi_k\big)\, \zeta_k -\, \rho_k^2\, \big|z_k\big|^2\,\zeta_k- \frac{\rho_k^4}{4}\, \big|\xi_k\big|^2\,\zeta_k,\\
\mathcal{X}_5&= - 2\rho_k^2\, \big(\widetilde{m}\cdot \zeta_k\big)\,\zeta_k  - 2\rho_k^3\, \big(z_k\cdot \zeta_k\big)\, \zeta_k  -\rho_k^4\, \big(\xi_k\cdot \zeta_k\big)\, \zeta_k -\rho_k^2\, \big|\zeta_k\big|^2\, \widetilde{m} - \rho_k^3\, \big|\zeta_k\big|^2\,z_k - \frac{\rho_k^4}{2}\, \big|\zeta_k\big|^2\, \xi_k,\\
\mathcal{X}_6&=-\rho_k^4\, \big|\zeta_k\big|^2\,\zeta_k.
\end{align*}
There exists a constant $\rho_0>0$ ensuring that for each $0<\rho_k \leq \rho_0$, the system \eqref{ER-SY} possesses a unique regular solution $\zeta_k\in \mathcal{M}$. Furthermore, the solution $\zeta_k$ vanishes in $\mathcal{M}$ as $k \to \infty$, that is, 
	\begin{equation}\label{CON-ZT}
	\zeta_k \ \text{converges to}\ \ 0\ \ \text{in}\  \mathcal{M}\ \ \text{as}\ k\to +\infty.
\end{equation}
\end{Lem}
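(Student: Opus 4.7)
The strategy mirrors the proof of Lemma \ref{PAN-1}: I would identify system \eqref{ER-SY} as a concrete instance of the general nonlinear system \eqref{ER-SF}, with $\rho_k$ playing the role of $\rho$, and then invoke Lemma \ref{L-SER}. The matching is purely structural; the higher powers of $\rho_k$ that appear in various terms of $\mathcal{X}_1,\dots,\mathcal{X}_6$ are absorbed by rescaling the corresponding coefficient functions.

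First I would pass term by term through each $\mathcal{X}_i$ and attribute its summands to the template terms in $\mathcal{F}_i$. The $\zeta_k$-independent forcing $\mathcal{X}_1$ maps to $\mathcal{F}_1$: cross products of type $z_k\times\Delta\xi_k$, $\xi_k\times\Delta z_k$, and $\xi_k\times\Delta\xi_k$ fall under $\varphi_2\times\Delta\varphi_3$ (for example, taking $\varphi_2=\tfrac{\rho_k}{4}\xi_k$ for the $\tfrac{\rho_k^2}{4}\xi_k\times\Delta\xi_k$ term so that the extra $\rho_k$ is absorbed into a coefficient that remains bounded in $L^\infty(0,T;H^2(\Omega))$); $\xi_k\times h_k$ is of type $\varphi_4\times\varphi_5$; and every triple-product $(a\cdot b)\,c$ with $a,b,c\in\{\widetilde{m},z_k,\xi_k\}$ is of type $(\varphi_6\cdot\varphi_7)\varphi_8$. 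The operator $\mathcal{X}_2$ matches $\mathcal{F}_2$: $\rho_k z_k\times\Delta\zeta_k$ and $\tfrac{\rho_k^2}{2}\xi_k\times\Delta\zeta_k$ both have the form $\rho\,\psi_1\times\Delta\zeta$, while $\rho_k\zeta_k\times\Delta z_k$ and $\tfrac{\rho_k^2}{2}\zeta_k\times\Delta\xi_k$ have the form $\rho\,\zeta\times\Delta\psi_2$, and $\rho_k^2\zeta_k\times\Delta\zeta_k$ has the form $\rho\,\zeta\times\Delta\zeta$. The $\mathcal{X}_3$ terms $\rho_k\zeta_k\times\widetilde{u}$ and $\rho_k\zeta_k\times h_k$ fit $\rho\,\zeta\times h$. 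Every summand of $\mathcal{X}_4$ is of type $\rho\,(\zeta\cdot\vartheta_1)\vartheta_2$ or $\rho\,(\vartheta_3\cdot\vartheta_4)\zeta$; every summand of $\mathcal{X}_5$ is of type $\rho\,|\zeta|^2\upsilon_1$ or $\rho\,(\zeta\cdot\upsilon_2)\zeta$; and $\mathcal{X}_6=-\rho_k^4|\zeta_k|^2\zeta_k$ is exactly $\rho\,|\zeta|^2\zeta$ with an extra $\rho_k^3$ absorbed into a scalar.

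Next I would verify the regularity demands of Lemma \ref{L-SER}. The hypotheses of the present lemma give $\widetilde{m},z_k,\xi_k\in L^\infty(0,T;H^2(\Omega))\cap L^2(0,T;H^3(\Omega))$ and $\widetilde{u},h_k\in L^2(0,T;H^1(\Omega))$. Because $\|h_k\|_{\mathcal{U}}=1$ and the a priori estimate \eqref{LSSE} applies to both $z_k$ and $\xi_k$ (the latter appearing as the solution of the affine linear system \eqref{XI-LS}), all of these norms are bounded uniformly in $k$. Consequently, after the scalings chosen above, each of the realized $\varphi_i,\psi_i,h,\vartheta_i,\upsilon_i$ lies in the space required by Lemma \ref{L-SER} with norm bounded independently of $k$. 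Lemma \ref{L-SER} therefore supplies a single smallness threshold $\rho_0>0$, valid uniformly in $k$, and since $\rho_k=\|u_k-\widetilde{u}\|_{\mathcal{U}}\to 0^+$, for all sufficiently large $k$ we have $\rho_k\le\rho_0$, so \eqref{ER-SY} admits a unique regular solution $\zeta_k\in\mathcal{M}$.

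Finally, the convergence $\zeta_k\to 0$ in $\mathcal{M}$ follows from the estimate \eqref{SE-ZETA}: the prefactor $\rho_k^2/\sigma^2$ multiplies a cubed expression and an exponential that stay uniformly bounded in $k$ (by the uniform bounds on the coefficient norms established in the preceding paragraph), so $\mathscr{C}_{mod}\to 0$ as $\rho_k\to 0^+$, from which $\|\zeta_k\|_{\mathcal{M}}\to 0$. The main obstacle is purely bookkeeping: correctly attributing each of the roughly thirty summands of $\mathcal{X}_1,\dots,\mathcal{X}_6$ to a template in $\mathcal{F}_1,\dots,\mathcal{F}_6$, choosing the scalings so that the various powers of $\rho_k$ are absorbed without disturbing the prescribed regularity, and confirming that the resulting coefficient fields enjoy bounds uniform in $k$ so that a single $\rho_0$ works for all large $k$.
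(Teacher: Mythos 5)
Your proposal is correct and follows essentially the same route as the paper: a term-by-term identification of the $\mathcal{X}_i$ with the template nonlinearities $\mathcal{F}_i$ of the model system \eqref{ER-SF}, verification that $\widetilde{m},z_k,\xi_k,h_k,\widetilde{u}$ meet the regularity hypotheses of Lemma \ref{L-SER}, and then an appeal to that lemma (and its energy estimate \eqref{SE-ZETA}) for existence, uniqueness for small $\rho_k$, and the vanishing of $\zeta_k$ as $\rho_k\to 0$. Your added remarks on absorbing the higher powers of $\rho_k$ into the coefficient functions and on the uniform-in-$k$ bounds (via $\|h_k\|_{\mathcal{U}}=1$ and \eqref{LSSE}) that make a single $\rho_0$ work are consistent with, and slightly more explicit than, the paper's argument.
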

\noindent The proof of Lemma \ref{L-PAN-XI} is given at the end of this section.

	Let us define the control $u_k=\widetilde{u} + \rho_k \,h_k$, and $m_k$ represents the unique regular solution of system \eqref{NLP} corresponding to the control $u_k$. Since $\widetilde{m}$, $z_{k}$, $\xi_{k}$, and $\zeta_{k}$ are the unique regular solutions of the systems \eqref{NLP}, \eqref{T-LS}, \eqref{XI-LS}, and \eqref{ER-SY}, respectively, we may proceed as follows. By weighting these equations with the factors $1$, $\rho_{k}$, $\tfrac{\rho_{k}^{2}}{2}$, and $\rho_{k}^{2}$, and then adding the resulting identities, we obtain an expression which, upon comparison with \eqref{NLP}, leads to	
	\begin{equation}\label{Z-0}
	m_k=\widetilde{m}+ \rho_k \ z_k + \frac{\rho_k^2}{2}\ \xi_k + \rho_k^2\ \zeta_k.
	\end{equation}
	
	\noindent \textbf{Step II:}	As a consequence of Step-II of Theorem \ref{T-FOOCT}, we define $T^*_k=T^*(u_k)$. Since $\big(\widetilde{m}(\widetilde{T})-m_{\Omega},\widetilde{m}_t(\widetilde{T})\big)_{L^2(\Omega)}<0$ from assumption \ref{CON-2}, therefore using the values of  $\mathcal{D}_{\widetilde{u}}$ and $\mathcal{G}_{\widetilde{u}}$ from the statement of the theorem, we will show that the remainder terms in the following two expressions converge to 0:	
	\begin{equation}\label{Z-1}
		T_k^*=\widetilde{T} + \rho_k \ \mathcal{D}_{\widetilde{u}}[h_k] + \rho_k\, \Psi_{\rho_k}, 
	\end{equation}
	and
	\begin{equation}\label{Z-2}
		T_k^*=\widetilde{T} + \rho_k \ \mathcal{D}_{\widetilde{u}}[h_k] +\rho_k^2\ \mathcal{G}_{\widetilde{u}}[h_k,h_k] + \rho_k^2\, \Psi_{\rho_k}.
	\end{equation}

	Note that the sequences $\Psi_{\rho_k}$ in equations \eqref{Z-1} and \eqref{Z-2} could be different.  Our goal is to show that the sequences $\Psi_{\rho_k}$  converge to 0 as $k\to \infty$.

	Our first claim in \eqref{Z-1} can be established by following a similar line of reasoning as presented in Step IV of Theorem \ref{L-EQ}. Moreover, since $\|h_k\|_{\mathcal{U}}=1$ and $\Psi_{\rho_k}$ converges to $0$ in \eqref{Z-1}, there exists a constant $C\geq 0$ and $k_0\in \mathbb{N}$ such that 
	\begin{equation}\label{Z-2.5}
		\left| \frac{T^*_k-\widetilde{T}}{\rho_k} \right| \leq \big| \mathcal{D}_{\widetilde{u}}[h_k] \big| + \big| \Psi_{\rho_k}\big| \leq C \ \ \ \ \ \  \text{for any } k\geq k_0.
	\end{equation}

 To prove \eqref{Z-2}, we will first demonstrate that the following expression holds as an equality in $H^1(\Omega)^*$ space:
	\begin{equation}\label{Z-3}
		z_k(T_k^*)=z_k(\widetilde{T})+z_{kt}(\widetilde{T})(T_k^*-\widetilde{T})+\rho_k \, \mathscr{E}_k,
	\end{equation}
	where $\mathscr{E}_k$ is a sequence converging to $0$ as $k$ tends to $+\infty$. 
	
	For any $v\in H^1(\Omega)$, let us consider the following equality:
	\begin{align}\label{Z-4}
		&\left<\frac{z_k(T_k^*)-z_k(\widetilde{T})-z_{kt}(\widetilde{T})(T_k^*-\widetilde{T})}{\rho_k},\, \upsilon \right>_{H^1(\Omega)^* \times H^1(\Omega)}= \frac{1}{\rho_k} \left<\int_{\widetilde{T}}^{T_k^*} \big(z_{kt}(\tau)-z_{kt}(\widetilde{T})\big)\ d\tau\, ,\, \upsilon \right>_{H^1(\Omega)^* \times H^1(\Omega)}\nonumber\\
		&\hspace{1cm} =  \frac{1}{\rho_k} \left<\int_{\widetilde{T}}^{T_k^*} \big(z_{kt}(\tau)- z_t(\tau)\big)\ d\tau\, ,\, \upsilon \right>_{H^1(\Omega)^* \times H^1(\Omega)}+  \frac{1}{\rho_k} \left<\int_{\widetilde{T}}^{T_k^*} \big(z_t(\tau)-z_t(\widetilde{T})\big)\ d\tau\, ,\, \upsilon \right>_{H^1(\Omega)^* \times H^1(\Omega)}\nonumber\\
		&\hspace{1.3cm}+ \frac{1}{\rho_k} \left<\int_{\widetilde{T}}^{T_k^*} \big(z_t(\widetilde{T})-z_{kt}(\widetilde{T})\big)\ d\tau\, ,\, \upsilon \right>_{H^1(\Omega)^* \times H^1(\Omega)}.
	\end{align}

	Next, we will show that each term on the right-hand side converges to $0$. For the first term applying Cauchy-Schwarz inequality and boundedness result \eqref{Z-2.5}, we get
	\begin{align*}
		&\frac{1}{\rho_k}\left| \left<\int_{\widetilde{T}}^{T_k^*} \big(z_{kt}(\tau)- z_t(\tau)\big)\ d\tau\, ,\, \upsilon \right>_{H^1(\Omega)^* \times H^1(\Omega)} \right|\leq \frac{1}{\rho_k} \int_{\widetilde{T}}^{T_k^*} \Big|\Big(z_{kt}(\tau)- z_t(\tau),\upsilon\Big)_{H^1(\Omega)^* \times H^1(\Omega)}\Big| \ d\tau\\
		&\hspace{1cm}\leq \frac{T_k^*-\widetilde{T}}{\rho_k}\ \|z_{kt}-z_t\|_{C([0,T];H^1(\Omega)^*)}\ \|\upsilon\|_{H^1(\Omega)}\leq C\ \|z_{kt}-z_t\|_{C([0,T];H^1(\Omega)^*)}\ \|\upsilon\|_{H^1(\Omega)}.
	\end{align*}
%Since $\frac{T_k^*-\widetilde{T}}{\rho_k}\to \mathcal{D}_{\widetilde{u}}$ by \eqref{Z-1}, and 
	Since $z_{kt}$ converges strongly to $z_t$ in $C([0,T];H^1(\Omega)^*)$ by Lemma \ref{Z-K}, it follows that the first term on the right-hand side of \eqref{Z-4} converges to $0$. Similarly, the third term can be shown to converge to $0$ using the same line of reasoning.
	
	Moreover, since $z\in C([0,T];H^2(\Omega))$ and $h\in C([0,T];L^2(\Omega))$, it follows from \eqref{T-LS} that $z_t$ belongs to $C([0,T];L^2(\Omega))$. 
%	Therefore, applying Cauchy-Schwarz inequality, and utilizing the continuity of $z_t$ in $C([0,T];L^2(\Omega))$ along with \eqref{Z-1}, we find 
	Therefore, applying H\"older's inequality, followed by estimate \eqref{Z-2.5}, we find that 	
	\begin{align*}
		&\frac{1}{\rho_k}\left| \left<\int_{\widetilde{T}}^{T_k^*} \big(z_{t}(\tau)- z_t(\widetilde{T})\big)\ d\tau\, ,\, \upsilon \right>_{H^1(\Omega)^* \times H^1(\Omega)} \right|\leq \frac{1}{\rho_k} \int_{\widetilde{T}}^{T_k^*} \Big|\Big(z_{t}(\tau)- z_t(\widetilde{T}),\upsilon\Big)_{L^2(\Omega)}\Big| \ d\tau\\
		&\leq \frac{1}{\rho_k} \int_{\widetilde{T}}^{T_k^*} \big\|z_{t}(\tau)- z_t(\widetilde{T})\big\|_{L^2(\Omega)}\ \|\upsilon\|_{L^2(\Omega)} \ d\tau \leq \frac{T_k^*-\widetilde{T}}{\rho_k}\ \big\|z_{t}(\tau)- z_t(\widetilde{T})\big\|_{C([\widetilde{T},T^*_k];L^2(\Omega))} \ \|\upsilon\|_{H^1(\Omega)}\\
		&\leq C\  \big\|z_{t}(\tau)- z_t(\widetilde{T})\big\|_{C([\widetilde{T},T^*_k];L^2(\Omega))} \ \|\upsilon\|_{H^1(\Omega)}\ \ \text{for } k\geq k_0.
	\end{align*}
	Since, $k\to \infty$ implies $\rho_k\to 0$, it follows from \eqref{Z-2.5} that $T^*_k \to \widetilde{T}$. Consequently, $$\big\|z_{t}(\tau)- z_t(\widetilde{T})\big\|_{C([\widetilde{T},T^*_k];L^2(\Omega))}\to0 \ \ \ \ \text{as }\  k \to \infty.$$
 The above reasoning demonstrates that each term on the right-hand side of \eqref{Z-4} converges to $0$. Therefore, we establish our claim that \eqref{Z-3} holds as an equality in the $H^1(\Omega)^*$ space.

 As it is evident from Remark \ref{MT-WPN}  that $\widetilde{m}_t(\widetilde{T})$ exists, and the assumption stated in the theorem validates $\widetilde{m}$ is twice Fr\'echet differentiable at $\widetilde{T},$  we can write the following Taylor expansion of $\widetilde{m}$ around $\widetilde{T}$:
\begin{equation}\label{Z-5}
	\widetilde{m}(T_k^*)-	\widetilde{m}(\widetilde{T})=	\widetilde{m}_t(\widetilde{T})\, (T_k^*-\widetilde{T}) + \frac{1}{2}\ \widetilde{m}_{tt}(\widetilde{T})\,(T_k^*-\widetilde{T})^2 + \rho_k^2 \ \mathscr{E}_k.
\end{equation}

Since $\|m_k(T_k^*)-m_{\Omega}\|_{L^2(\Omega)}=\delta$, it follows from Proposition \ref{L-EQ} and equality \eqref{Z-0} that 
\begin{align}\label{Z-6}
	&0=\big\| \widetilde{m}(T_k^*)+ \rho_k \, z_k(T_k^*) + \frac{\rho_k^2}{2}\ \xi_k(T_k^*) + \rho_k^2\ \zeta_k(T_k^*)  -\widetilde{m}(\widetilde{T})\big\|^2_{L^2(\Omega)}\nonumber\\
	&\hspace{1cm} +2\ \Big(\widetilde{m}(T_k^*)+ \rho_k \, z_k(T_k^*) + \frac{\rho_k^2}{2}\ \xi_k(T_k^*) + \rho_k^2\ \zeta_k(T_k^*)  -\widetilde{m}(\widetilde{T}) \,,\, \widetilde{m}(\widetilde{T}) -m_{\Omega}\Big)_{L^2(\Omega)}:=\sum_{i=1}^{2}\mathcal{B}_i
\end{align}
Now, we will estimate each term on the right-hand side of \eqref{Z-6} individually.

Before proceeding, by applying the convergence $z_k(\widetilde{T}) \to z(\widetilde{T})$ in $H^1(\Omega)$ from \eqref{CON-Z} to the definitions of $\mathcal{D}_{\widetilde{u}}$  given in equations \eqref{SOC-D}, we derive 
\begin{equation}\label{CON-D}
	\mathcal{D}_{\widetilde{u}}[h_k] \to \mathcal{D}_{\widetilde{u}}[h]\ \ \ \ \ \text{as }\ k\to \infty.
\end{equation} 
Next, using the convergence results \eqref{CON-H1}, \eqref{CON-H2}, \eqref{CON-Z1} and \eqref{CON-Z}, together with \eqref{CON-D}, we derive from the definition of $\mathcal{G}_{\widetilde{u}}$ given in equations \eqref{SOC-GN} that 
\begin{equation}\label{CON-G}
	\mathcal{G}_{\widetilde{u}}[h_k,h_k] \to \mathcal{G}_{\widetilde{u}}[h,h]\ \ \ \ \ \text{as }\ k\to \infty.
\end{equation}

Now, for the first term $\mathcal{B}_1$, substituting \eqref{Z-5} and using the convergences \eqref{CON-Z}, \eqref{CON-XI}, \eqref{CON-ZT} and \eqref{CON-D}, we find that
\begin{align}\label{Z-7}
	\lim_{k\to \infty}\frac{\mathcal{B}_1}{\rho_k^2}&= 	\lim_{k\to \infty} \frac{\big\|\widetilde{m}_t(\widetilde{T})\, (T_k^*-\widetilde{T}) + \frac{1}{2}\ \widetilde{m}_{tt}(\widetilde{T})\,(T_k^*-\widetilde{T})^2 + \rho_k^2 \ \mathscr{E}_k+ \rho_k \, z_k(T_k^*)+ \frac{\rho_k^2}{2}\ \xi_k(T_k^*) + \rho_k^2\ \zeta_k(T_k^*) \big\|^2_{L^2(\Omega)}}{\rho_k^2}\nonumber\\
	&= \|\widetilde{m}_t(\widetilde{T})\, \mathcal{D}_{\widetilde{u}}[h] + z(\widetilde{T})\|^2_{L^2(\Omega)}.
\end{align}
	Next, we will estimate the second term $\mathcal{B}_2$. Substituting \eqref{Z-3} and the expression
	\begin{align*}
	&\widetilde{m}(T_k^*) -\widetilde{m}(\widetilde{T})= \widetilde{m}_t(\widetilde{T})\, \Big(T_k^*-\widetilde{T}-\rho_k\, \mathcal{D}_{\widetilde{u}}[h_k]-\rho_k^2\,\mathcal{G}_{\widetilde{u}}[h_k,h_k]\Big)\\
	&\hspace{1cm}+ \rho_k\, \widetilde{m}_t(\widetilde{T}) \,\mathcal{D}_{\widetilde{u}}[h_k] +\rho_k^2\, \widetilde{m}_t(\widetilde{T})\,\mathcal{G}_{\widetilde{u}}[h_k,h_k]+\frac{1}{2}\ \widetilde{m}_{tt}(\widetilde{T})\,(T_k^*-\widetilde{T})^2 + \rho_k^2 \ \mathscr{E}_k,
    \end{align*}
which is derived from \eqref{Z-5}, into $\mathcal{B}_2$, we obtain
\begin{align*}
	\mathcal{B}_2=2\ &\bigg( \widetilde{m}_t(\widetilde{T})\, \Big(T_k^*-\widetilde{T}-\rho_k\, \mathcal{D}_{\widetilde{u}}[h_k]-\rho_k^2\,\mathcal{G}_{\widetilde{u}}[h_k,h_k]\Big)+ \rho_k\, \widetilde{m}_t(\widetilde{T}) \,\mathcal{D}_{\widetilde{u}}[h_k] +\rho_k^2\, \widetilde{m}_t(\widetilde{T})\,\mathcal{G}_{\widetilde{u}}[h_k,h_k]\nonumber\\
	& +\frac{1}{2}\  \widetilde{m}_{tt}(\widetilde{T})\,(T_k^*-\widetilde{T})^2 +\rho_k z_k(\widetilde{T}) + \frac{\rho_k^2}{2}\ \xi_k(T_k^*) + \rho_k^2\ \zeta_k(T_k^*)  \ ,\ \widetilde{m}(\widetilde{T}) -m_{\Omega}\bigg)_{L^2(\Omega)} \nonumber\\
	&+2\, \rho_k \ \big\langle z_{kt}(\widetilde{T})(T_k^*-\widetilde{T}) \,,\,\widetilde{m}(\widetilde{T}) -m_{\Omega}\big\rangle_{H^1(\Omega)^*\times H^1(\Omega)}+\rho_k^2 \ \mathscr{H}_k,
\end{align*}
where, $\mathscr{H}_k$ represents the error term, which depends on the parameters $\Psi_k$ and $\mathscr{E}_k$, and is such that $\mathscr{H}_k \to 0$ as $k\to \infty$.
	
	Furthermore, by leveraging the equality \eqref{Z-1}, we arrive at the expression
	$$z_{kt}(\widetilde{T})(T_k^*-\widetilde{T})=\rho_k \ z_{kt}(\widetilde{T}) \big( \mathcal{D}_{\widetilde{u}}[h_k]\big) + \rho_k \Psi_{\rho_k},$$ and 
$$\widetilde{m}_{tt}(T_k^*-\widetilde{T})^2=\widetilde{m}_{tt}\Big(\rho_k \ \mathcal{D}_{\widetilde{u}}[h_k] + \rho_k \Psi_{\rho_k}\Big)^2 = \rho_k^2\,\widetilde{m}_{tt}\big(\mathcal{D}_{\widetilde{u}}[h_k]^2\big) + \rho_k^2 \Psi_{\rho_k}. $$
Substituting these expressions into the previous equation, we obtain 
\begin{align}\label{Z-8}
\mathcal{B}_2=2\ &\bigg( \widetilde{m}_t(\widetilde{T})\, \Big(T_k^*-\widetilde{T}-\rho_k\, \mathcal{D}_{\widetilde{u}}[h_k]-\rho_k^2\,\mathcal{G}_{\widetilde{u}}[h_k,h_k]\Big)+ \rho_k\, \widetilde{m}_t(\widetilde{T}) \,\mathcal{D}_{\widetilde{u}}[h_k] +\rho_k^2\, \widetilde{m}_t(\widetilde{T})\,\mathcal{G}_{\widetilde{u}}[h_k,h_k]\nonumber\\
& +\frac{1}{2}\ \rho_k^2 \widetilde{m}_{tt}(\widetilde{T})\,\mathcal{D}_{\widetilde{u}}[h_k]^2 +\rho_k z_k(\widetilde{T}) + \frac{\rho_k^2}{2}\ \xi_k(T_k^*) + \rho_k^2\ \zeta_k(T_k^*)  \ ,\ \widetilde{m}(\widetilde{T}) -m_{\Omega}\bigg)_{L^2(\Omega)} \nonumber\\
&+2\, \rho_k^2 \ \big\langle z_{kt}(\widetilde{T})\big( \mathcal{D}_{\widetilde{u}}[h_k]\big) \,,\,\widetilde{m}(\widetilde{T}) -m_{\Omega}\big\rangle_{H^1(\Omega)^*\times H^1(\Omega)}+\rho_k^2 \ \mathscr{H}_k.
\end{align}

Finally, we apply the equality $z_k(\widetilde{T})+\widetilde{m}_t(\widetilde{T})\mathcal{D}_{\widetilde{u}}[h_k]=0$ from equation \eqref{SOC-D} into \eqref{Z-8}, divide by $2\rho_k^2$ and take the limit as $k\to \infty$. Additionally, by using the identity \eqref{SOC-D}, along with the convergences \eqref{CON-Z}, \eqref{CON-XI} and \eqref{CON-ZT}, as well as Lemma \ref{Z-K},  we derive 
\begin{align}\label{Z-9}
	&\lim_{k\to \infty} \frac{\mathcal{B}_2}{2\rho_k^2} =\big(\widetilde{m}_t(\widetilde{T})\,,\, \widetilde{m}(\widetilde{T})-m_{\Omega}\big)_{L^2(\Omega)}\ \lim_{k\to \infty} \frac{T_k^*-\widetilde{T}-\rho_k\, \mathcal{D}_{\widetilde{u}}[h_k]-\rho_k^2\,\mathcal{G}_{\widetilde{u}}[h_k,h_k]}{\rho_k^2} \nonumber\\
	&\hspace{0.3cm }+  \big(\widetilde{m}_t(\widetilde{T})\,,\, \widetilde{m}(\widetilde{T})-m_{\Omega}\big)_{L^2(\Omega)}\,\mathcal{G}_{\widetilde{u}}[h,h]+ \bigg( \frac{1}{2}\ \widetilde{m}_{tt}(\widetilde{T})\,\mathcal{D}_{\widetilde{u}}[h]^2  + z_{t}(\widetilde{T}) \mathcal{D}_{\widetilde{u}}[h] +  \frac{\xi(\widetilde{T})}{2}   \,,\, \widetilde{m}(\widetilde{T}) -m_{\Omega}\bigg)_{L^2(\Omega)}. 
\end{align}
Since $\big(\widetilde{m}_t(\widetilde{T}), \widetilde{m}(\widetilde{T})-m_{\Omega}\big)_{L^2(\Omega)}<0$ by assumption \eqref{CON-2}, we divide \eqref{Z-6} by $2\rho_k^2$, take the limit as $k\to \infty$, and substitute \eqref{Z-7} and \eqref{Z-9}. As a result, we obtain the following expression
\begin{align}\label{Z-9.5}
	\lim_{k\to \infty} &\frac{T_k^*-\widetilde{T}-\rho_k\, \mathcal{D}_{\widetilde{u}}[h_k]-\rho_k^2\,\mathcal{G}_{\widetilde{u}}[h_k,h_k]}{\rho_k^2}= -\frac{1}{2} \frac{ \|\widetilde{m}_t(\widetilde{T})\, \mathcal{D}_{\widetilde{u}}[h] + z(\widetilde{T})\|^2_{L^2(\Omega)}}{\big(\widetilde{m}_t(\widetilde{T}), \widetilde{m}(\widetilde{T})-m_{\Omega}\big)_{L^2(\Omega)}}-\mathcal{G}_{\widetilde{u}}[h,h]\nonumber\\
	& -\frac{1}{\big(\widetilde{m}_t(\widetilde{T}), \widetilde{m}(\widetilde{T})-m_{\Omega}\big)_{L^2(\Omega)}} \bigg( \frac{1}{2}\ \widetilde{m}_{tt}(\widetilde{T})\,\mathcal{D}_{\widetilde{u}}[h]^2  + z_{t}(\widetilde{T}) \mathcal{D}_{\widetilde{u}}[h] +  \frac{\xi(\widetilde{T})}{2}   ,\ \widetilde{m}(\widetilde{T}) -m_{\Omega}\bigg)_{L^2(\Omega)}. 
\end{align}
Before proceeding further, we extract the term $\xi$ from the previous equality \eqref{Z-9.5}. We will proceed by expressing the term $\Big( \xi(\widetilde{T}),\widetilde{m}(\widetilde{T})-m_{\Omega}\Big)$ by utilizing the adjoint and the linearized solutions. So, by taking the $L^2(\Omega)$ inner product of \eqref{XI-LS} with $\phi$, and adding it to the weak formulation \eqref{WEFO} of the adjoint system with $\vartheta=\xi$, followed by integration by parts of respective terms,  we derive
$$2\int_{\Omega_{\widetilde{T}}} \Big(z\times \Delta z + z \times h -2\, \big(z\cdot \widetilde{m}\big)\, z-|z|^2\,\widetilde{m}\Big)\ \phi \ dx\ dt =\frac{-\widetilde{T} \ \big(\xi(\widetilde{T})\,,\,\widetilde{m}(\widetilde{T})-m_{\Omega}\big)}{\big(\widetilde{m}_t(\widetilde{T})\,,\, \widetilde{m}(\widetilde{T})-m_{\Omega}\big)_{L^2(\Omega)} }.$$
Finally, substituting this equality into equation \eqref{Z-9.5}, and applying \eqref{SOC-GN}, we arrive at our desired conclusion that 
$$\lim_{k\to \infty} \frac{T_k^*-\widetilde{T}-\rho_k\, \mathcal{D}_{\widetilde{u}}[h_k]-\rho_k^2\,\mathcal{G}_{\widetilde{u}}[h_k,h_k]}{\rho_k^2}=0.$$

%		\begin{align}\label{SOC-G}
%		&\big(\widetilde{m}_t(\widetilde{T}), \widetilde{m}(\widetilde{T})-m_{\Omega}\big)_{L^2(\Omega)}\,\mathcal{G}_{\widetilde{u}}[h,h] =-\frac{1}{2}\  \big\|\, \widetilde{m}_t(\widetilde{T})\, \mathcal{D}_{\widetilde{u}}[h] + z_h(\widetilde{T})\,\big\|^2_{L^2(\Omega)}\nonumber\\
%		&\hspace{2cm}-\bigg( \frac{1}{2}\ \widetilde{m}_{tt}(\widetilde{T})\,\mathcal{D}_{\widetilde{u}}[h]^2  + z_{t}(\widetilde{T}) \mathcal{D}_{\widetilde{u}}[h] +  \frac{\xi(\widetilde{T})}{2}   ,\ \widetilde{m}(\widetilde{T}) -m_{\Omega}\bigg)_{L^2(\Omega)},
%	\end{align}

	\noindent \textbf{Step III:} Next, we will show that $h\in \Big(\mathcal{P}_{\mathcal{U}_{ad}}(\widetilde{u}) \cap \Upsilon(\widetilde{u})\Big)\backslash \{0\} $.

Since $	\mathcal{P}_{\mathcal{U}_{ad}}(\widetilde{u})$ is a closed and convex subset of the Hilbert space $\mathcal{U}$, it follows that it is weakly closed. Consequently, $h\in \mathcal{P}_{\mathcal{U}_{ad}}(\widetilde{u})$.
	
%	Now, taking the difference $\mathcal{J}(u_k)-\mathcal{J}(\widetilde{u})$ and then substituting \eqref{Z-1}, we will obtain
%	\begin{align*}
%		\mathcal{J}(u_k)-\mathcal{J}(\widetilde{u}) &=\frac{1}{2}\ (T^*_k-\widetilde{T})\ (T^*_k+\widetilde{T}) +\rho_k \ \langle \widetilde{u},h_k\rangle_{\mathcal{U}} +\frac{1}{2}\ \rho_k^2 \ \| h_k\|^2_{\mathcal{U}}\\
%		&=\frac{1}{2}\ \Big[\rho_k \mathcal{D}_{\widetilde{u}}[h_k] +\rho_k \Psi_{\rho_k} \Big]\ \Big[2\widetilde{T}+\rho_k \mathcal{D}_{\widetilde{u}}[h_k] +\rho_k \Psi_{\rho_k} \Big] +\rho_k \ \langle \widetilde{u},h_k\rangle_{\mathcal{U}} +\frac{1}{2}\ \rho_k^2 \ \| h_k\|^2_{\mathcal{U}}\\
%		&=\rho_k \ \Big(\widetilde{T}\ \mathcal{D}_{\widetilde{u}}[h_k] + \ \langle \widetilde{u},h_k\rangle_{\mathcal{U}} \Big) +\rho_k^2\ \Psi_{\rho_k}.
%	\end{align*}
	Now, taking the difference $\mathcal{J}(u_k)-\mathcal{J}(\widetilde{u})$ and then substituting \eqref{Z-2}, we will obtain
	\begin{align*}
		\mathcal{J}(u_k)-\mathcal{J}(\widetilde{u}) &=\frac{1}{2}\ (T^*_k-\widetilde{T})\ (T^*_k+\widetilde{T}) +\rho_k \ (( \widetilde{u},h_k))_{\mathcal{U}} +\frac{1}{2}\ \rho_k^2 \ \| h_k\|^2_{\mathcal{U}}\\
		&=\frac{1}{2}\ \Big[\rho_k \mathcal{D}_{\widetilde{u}}[h_k]+ \rho_k^2 \mathcal{G}_{\widetilde{u}}[h_k,h_k] +\rho_k^2 \Psi_{\rho_k} \Big]\ \Big[2\widetilde{T}+\rho_k \mathcal{D}_{\widetilde{u}}[h_k]+ \rho_k^2 \mathcal{G}_{\widetilde{u}}[h_k,h_k] +\rho_k^2 \Psi_{\rho_k} \Big]\\
		&\hspace{3cm}+\rho_k \ ((\widetilde{u},h_k))_{\mathcal{U}} +\frac{1}{2}\ \rho_k^2 \ \| h_k\|^2_{\mathcal{U}}\\
		&=\rho_k \ \Big(\widetilde{T}\ \mathcal{D}_{\widetilde{u}}[h_k] + \ (( \widetilde{u},h_k))_{\mathcal{U}} \Big) +\frac{\rho_k^2}{2}\ \Big( \mathcal{D}_{\widetilde{u}}[h_k]^2 +2\widetilde{T} \ \mathcal{G}_{\widetilde{u}}[h_k,h_k] +\|h_k\|^2_{\mathcal{U}}  \Big) +\rho_k^2\ \Psi_{\rho_k},
	\end{align*}
	where the terms containing powers of $\rho_k$ higher than 2 have been grouped into the remainder $\Psi_{\rho_k}$.
	
	Now, substituting \eqref{FO-E7} and using the definitions $\mathcal{Y}(h_k)$ and $\mathcal{Q}(h_k)$ from \eqref{FOOC} and \eqref{SO-ME}, we obtain
	\begin{align*}
		\mathcal{J}(u_k)-\mathcal{J}(\widetilde{u}) &= \rho_k \ \left(\int_{\Omega_{\widetilde{T}}} \big(\phi \times \widetilde{m}\big)\cdot h_k\ dx\ dt + \int_{\Omega_{\widetilde{T}}} \phi \cdot h_k\ dx\ dt+ \ (( \widetilde{u},h_k))_{\mathcal{U}} \right)\\
		&\hspace{2cm} +\frac{\rho_k^2}{2}\ \Big( \mathcal{D}_{\widetilde{u}}[h_k]^2 +2\widetilde{T} \ \mathcal{G}_{\widetilde{u}}[h_k,h_k] +\|h_k\|^2_{\mathcal{U}}  \Big) +\rho_k^2\ \Psi_{\rho_k} \\
		&=\rho_k\ \mathcal{Y}(h_k) +\frac{\rho_k^2}{2}\ \mathcal{Q}(h_k) + \rho_k^2\ \Psi_{\rho_k}.
	\end{align*}
	Since from \eqref{CQGC} $\mathcal{J}(u_k)-\mathcal{J}(\widetilde{u}) <\frac{\rho_k^2}{k}$, substituting this in the above estimate we find 
	\begin{equation}\label{Z-10}
	\mathcal{Y}(h_k)< \frac{\rho_k}{k} -\frac{\rho_k}{2}\ \mathcal{Q}(h_k) - \rho_k\ \Psi_{\rho_k}.	
	\end{equation}
	Using the convergences \eqref{CON-D} and \eqref{CON-G}, along with the norm equality $\|h_k\|_{\mathcal{U}}=1$\ for all $k$ , it follows that $\mathcal{Q}(h_k)$ is bounded. Therefore, taking the limit as $k \to \infty$ in \eqref{Z-10}, and using the convergences \eqref{CON-H1} and \eqref{CON-H2}, we obtain
	\begin{equation}\label{Z-11}
	\mathcal{Y}(h)\leq 0.	
	\end{equation} 
	Since $h\in \mathcal{P}_{\mathcal{U}_{ad}}(\widetilde{u})$, by combining \eqref{Z-11} with \eqref{FOOC}, we obtain 	$\mathcal{Y}(h)= 0.	$ Therefore, by the definition given in \eqref{SET-EQ0}, we have $h\in \Upsilon (\widetilde{u}).$
	
	Using the convergences \eqref{CON-D} and \eqref{CON-G}, and the weak sequential lower semi-continuity of $h_k$, from the definition of $\mathcal{Q}(h_k)$ in equation \eqref{SO-ME}, we find
	\begin{equation}\label{Z-12}
		\mathcal{D}_{\widetilde{u}}[h]^2+2\ \widetilde{T} \ \mathcal{G}_{\widetilde{u}}[h,h] + \|h\|^2_{\mathcal{U}} \leq \liminf_{k \to \infty} \Big( \mathcal{D}_{\widetilde{u}}[h_k]^2+2\ \widetilde{T} \ \mathcal{G}_{\widetilde{u}}[h_k,h_k] + \|h_k\|^2_{\mathcal{U}} \Big)=\liminf_{k \to \infty} \mathcal{Q}(h_k).
	\end{equation}
	Moreover, since $\mathcal{Y}(h_k)\geq 0$ for all $k$, it follows from \eqref{Z-10} that
	\begin{equation}\label{Z-13}
		\mathcal{Q}(h_k) < 2\left( \frac{1}{k}-\Psi_{\rho_k}\right).
	\end{equation}
	By combining \eqref{Z-12} and \eqref{Z-13}, it is clear that $h$ must be non-zero.

	\noindent \textbf{Step IV:} $\mathcal{Q}(h)\leq 0.$ 

Again, exploiting the weak sequential lower semi-continuity along with the convergences \eqref{CON-D} and \eqref{CON-G}, we find from \eqref{Z-13} that 
$$\mathcal{Q}(h)\leq \liminf_{k \to \infty} \mathcal{Q}(h_k)\leq  \lim_{k\to \infty} 2\left( \frac{1}{k}-\Psi_{\rho_k}\right) =0.$$ 
This is a contradiction to our assumption \eqref{SO-ME}. Thus, the proof is complete.
\end{proof}

\begin{proof}[Proof of Lemma \ref{ER-SY}]
	We have considered a simplified version of system \eqref{ER-SY} in Lemma \ref{L-SER}, focusing on specific types of nonlinearities. Other forms of nonlinearity can be addressed using similar arguments. 
	
	By analyzing equations \eqref{ER-SY} and \eqref{ER-SF}, we observe that each term in $\mathcal{X}_1$ closely resembles a corresponding term in $\mathcal{F}_1$. Since $\widetilde{m}, z_k, \xi_k$ are in $L^2(0,T;H^3(\Omega))\cap L^{\infty}(0,T;H^2(\Omega))$, they also belong to the space required by the $\phi_i$'s in Lemma \ref{L-SER}. 
	
	Similarly, by comparing, we can find that each term of $\mathcal{X}_i$ in the equation \eqref{ER-SY} finds resemblance with one of the terms $\mathcal{F}_i$ in the equation \eqref{ER-SF}. Moreover, they also satisfy the regularity requirement given in Lemma \ref{L-SER}. Therefore, following the pathway of Lemma \ref{L-SER}, we can establish the existence of a regular solution $\zeta_k$ for the system \eqref{ER-SY}. Furthermore, since $\rho_k=\|u_k-\widetilde{u}\|_{\mathcal{M}}$, we have $\rho_k \to 0$ as $k\to \infty$. Therefore, by deriving the energy estimate as in Lemma \ref{L-SER}, and using the convergence of $\rho_k$, we can find that $\zeta_k\to 0$ in $\mathcal{M}$ as  $k\to \infty$.	
\end{proof}

\section{Conclusion and Future Works}

	This work establishes a rigorous framework for the \emph{time–optimal} control of the LLB dynamics on bounded domains in one, two, and three dimensions, proving the existence of optimal solutions and deriving both first-order necessary and second-order sufficient conditions under the required regularity of the state and controls. The analysis overcomes the difficulties due to the  nonlinearities $m\times\Delta m$ and $|m|^{2}m$ as well as the nonlinear control appearance in the effective field by developing a unified solvability theory for the linearized and second-variation systems, and by working with controls in $L^{2}(0,T;H^{1}(\Omega))$, we  demonstrate that  $m\in W^{1,2}\big(0,T;H^{3}(\Omega),H^{1}(\Omega)\big)$. These results clarify the local structure of optimizers for minimal-time switching targets and provide a mathematically stable entry point for finite element time-marchers and adjoint-based algorithms, thereby laying the groundwork for quantitative studies relevant to ultrafast spin dynamics and thermally assisted magnetic switching.

	Recent numerical work around micromagnetic control offers useful examples. For single-particle switching on ellipsoids, \cite{FAKB} constructs space-uniform fields that exponentially stabilize uniform states and gives a PDE-level approximate switching via a Lyapunov argument. For networks of ellipsoidal cells, \cite{SAGC} establishes stability/controllability under dipolar coupling and supports it with explicit-Euler-renormalized simulations. For PDE-constrained LLG control, \cite{TDMKAP} analyzes a semi-implicit Euler–finite element scheme, proves stability/compactness for the semidiscrete optimality system, and contrasts it with a projection–penalization variant, which enforces $\lvert m\rvert=1$.

	More closure to our study, \cite{ECIG} treats some minimal-time control problems by making the final time an additional decision variable in a bi-objective quadratic setup. Controls are piecewise-constant and updated by direct as well as iterative steps; for ODEs this technique even yields closed-form updates. For PDEs (e.g., the heat equation), they use a reduced state-adjoint system with a terminal-tube condition: for each  final time candidate, a single scalar multiplier enforces the terminal distance, and the optimal time follows from a simple scalar relation. The computation of state and adjoint systems is advanced with standard finite elements and implicit time-stepping methods.

	Finally, the time-optimal computations for the LLB equation can treat the final time as an additional decision variable and enforce the terminal $\delta$-tube via a scalar multiplier or a penalty. A standard state-adjoint loop may be built with conforming finite elements in space and stable time stepping, while the control uses variational discretization and a discrete adjoint to drive projected or augmented-Lagrangian updates in $(u,T)$. Further, initial checks can use macrospin and 1D reductions before 2D/3D studies, reporting switching-time/tolerance trade-offs and examining bang-bang tendencies of the control.

\section{Appendix}  \label{app}
In this section, we present a concise proof of Lemma \ref{L-MTT}. For a detailed exposition of the intermediate steps, the reader is referred to the proof of Theorem 2.2 in \cite{SPSK3}. Furthermore, the proof relies on the estimates and inequalities stated in Lemmas \ref{CPP} and \ref{EN} as well as Proposition  \ref{PROP1}. 
\begin{proof}[Proof of Lemma \ref{L-MTT}]
Consider the same set of eigenfunctions and Galerkin approximated system of \eqref{NLP} introduced in the proof of Theorem 2.1 in \cite{SPSK3}:
	\begin{equation}\label{4-GA}
	\begin{cases}
		(m_n)_t- \Delta m_n=\mathbb{P}_n \left[ m_n \times \Delta m_n + m_n \times u-\big(1+|m_n|^2\big)\ m_n +u \right],\\
		m_n(0)=\mathbb{P}_n(m_0).
	\end{cases}
\end{equation}
Taking ``$\nabla \Delta$" in equation \eqref{4-GA} and considering the $L^2$ inner product with $-\nabla \Delta^2 m_n$, we find 
\begin{align}\label{REQ-1}
	\frac{1}{2} \frac{d}{dt}\|\Delta^2 m_n(t)\|^2_{L^2(\Omega)} &+ \|\nabla \Delta^2 m_n(t)\|^2_{L^2(\Omega)} = - \Big(\nabla \Delta \left(m_n\times \Delta m_n\right), \nabla \Delta^2 m_n\Big) - \Big( \nabla \Delta \big(m_n\times u\big),\nabla  \Delta^2 m_n\Big)\ \ \ \ \nonumber\\
	& +\left(\nabla\Delta \left(\left(1+|m_n|^2\right)m_n\right),\nabla \Delta^2 m_n\right) -  \big(\nabla\Delta  u,\nabla \Delta^2 m_n\big):= \sum_{i=1}^{4} E_i. \ \ \ \ \ \ \ 
\end{align}
Before proceeding to estimate the right-hand side terms in \eqref{REQ-1}, it can be  noted that the estimates presented in Proposition \ref{PROP1}  also hold when $m$ is replaced by $\Delta m_n$, since $\frac{\partial \Delta m_n}{\partial \eta}=0$. \\
\indent For the first term $E_1,$ utilizing the equality $a\times a=0$, $a\cdot (a \times b)=0$, applying H\"older's inequality, incorporating the embeddings $H^1(\Omega) \hookrightarrow L^4(\Omega)$ and $H^2(\Omega) \hookrightarrow L^\infty(\Omega)$ along with estimates \eqref{ES3} and \eqref{ES4}, we determine 
\begin{flalign*}
	E_1 &= - \int_\Omega \left(\nabla m_n(t) \times \Delta^2 m_n(t) + 2\ \nabla\, \Big(\nabla m_n(t) \times \nabla \Delta m_n(t)\Big)\right) \cdot \nabla \Delta^2 m_n(t)\ dx\\
	& \leq \Big( \|\nabla m_n(t)\|_{L^{\infty}(\Omega)}\, \|\Delta^2 m_n(t)\|_{L^2(\Omega)} + 2\, \|D^2m_n(t)\|_{L^4(\Omega)}\, \|\nabla \Delta m_n(t)\|_{L^4(\Omega)}\\
	&\hspace{1cm} +2\, \|\nabla m_n(t)\|_{L^{\infty}(\Omega)}\, \|D^2\Delta m_n(t)\|_{L^2(\Omega)}\Big) \ \|\nabla \Delta^2 m_n(t)\|_{L^2(\Omega)} &\\
	& \leq \epsilon \ \|\nabla \Delta^2 m_n(t)\|_{L^2(\Omega)}^2 + C\ \Big( \|\nabla m_n(t)\|^2_{H^2(\Omega)}+ \|m_n(t)\|^2_{H^3(\Omega)}\Big) \ \|\Delta^2 m_n(t)\|^2_{L^2(\Omega)}. 
\end{flalign*}
Note that the cross product between the gradients is in the sense that $\displaystyle \nabla a \times \nabla b= \sum_{k=1}^n \frac{\partial a_k}{\partial x_k} \times  \frac{\partial b_k}{\partial x_k}$.\\
\noindent For the second term $E_2$ , applying H\"older's inequality followed by the embedding $H^1(\Omega)\hookrightarrow L^4(\Omega)$  and $H^2(\Omega) \hookrightarrow L^\infty(\Omega)$, we derive
\begin{flalign*}
	E_2 &=  - \int_\Omega \bigg[ \nabla  \Big(\Delta m_n(t) \times u(t)+ m_n(t) \times \Delta u(t) + 2\, \nabla m_n \times \nabla u\Big)\cdot  \nabla \Delta^2 m_n(t)\bigg]\ dx&\\
	&\leq \Big(\|\nabla \Delta  m_n(t)\|_{L^2(\Omega)} \ \|u(t)\|_{L^\infty(\Omega)} +  \| \Delta m_n(t)\|_{L^4(\Omega)} \|\nabla u(t)\|_{L^4(\Omega)} + \|\nabla m_n(t)\|_{L^\infty(\Omega)} \|\Delta u(t)\|_{L^2(\Omega)} \\
	&\hspace{0.21cm} + \|m_n(t)\|_{L^\infty(\Omega)}  \|\nabla\Delta  u(t)\|_{L^2(\Omega)} + 2\, \|D^2m_n(t)\|_{L^4(\Omega)} \|\nabla u(t)\|_{L^4(\Omega)} \\
    &\hspace{0.21cm} +2\, \|\nabla m_n(t)\|_{L^4(\Omega)} \|D^2 u(t)\|_{L^4(\Omega)} \Big) \, \|\nabla \Delta^2 m_n(t)\|_{L^2(\Omega)}\\
	&\leq \epsilon   \ \|\nabla \Delta m_n(t)\|^2_{L^2(\Omega)} + C(\epsilon) \ \|m_n(t)\|^2_{H^3(\Omega)}\ \|u(t)\|^2_{H^2(\Omega)} + \|m_n(t)\|^2_{H^2(\Omega)} \ \|u(t)\|^2_{H^3(\Omega)}.
\end{flalign*}
Finally, proceeding in the same way for the last two terms $E_3$ and $E_4$, we arrive at
\begin{flalign*}
	E_3&= \int_{\Omega} \nabla \Big( \Delta m_n(t) + 2\, \big( \Delta m_n(t) \cdot m_n(t) \big) m_n(t) + \big( m_n(t)\cdot m_n(t)\big)\Delta m_n(t) +2\big(\nabla m_n(t) \cdot \nabla m_n(t)\big)m_n(t)\\
	&\hspace{1.5cm} + 4\,\big(m_n(t)\cdot \nabla m_n(t)\big)\nabla m_n(t)\Big) \cdot \nabla \Delta^2 m_n(t)\ dx&\\
	&\leq \epsilon \ \|\nabla \Delta^2 m_n(t)\|^2_{L^2(\Omega)} + C(\epsilon)\ \left(1+  \|m_n(t)\|^4_{H^2(\Omega)} \right)\ \|m_n(t)\|^2_{H^3(\Omega)},
\end{flalign*}
and
\begin{flalign*}
	E_4 &= -\int_\Omega \nabla \Delta u(t)\cdot \nabla \Delta^2 m_n(t) \ dx \leq \epsilon \ \|\nabla \Delta^2 m_n(t)\|^2_{L^2(\Omega)} + C(\epsilon) \ \|u(t)\|^2_{H^3(\Omega)}.&
\end{flalign*}
Substituting the estimates for $E_i$ from $i=1$ to $4$ in \eqref{REQ-1}, choosing $\epsilon = 1/8$, we obtain
\begin{flalign*}
	&\frac{d}{dt} \|\Delta^2 m_n(t)\|^2_{L^2(\Omega)} + \|\nabla \Delta^2 m_n(t)\|^2_{L^2(\Omega)} \leq C\, \|m_n(t)\|^2_{H^3(\Omega)} \ \|\Delta^2 m_n(t)\|^2_{L^2(\Omega)} \\
	&+  C \left(1+\|m_n(t)\|^4_{H^2(\Omega)}+\|u(t)\|^2_{H^2(\Omega)} \right)\ \|m_n(t)\|^2_{H^3(\Omega)} +C   \left(1+\|m_n(t)\|^2_{H^2(\Omega)}\right)\ \|u(t)\|^2_{H^3(\Omega)}.
\end{flalign*}
Now, applying Gr\"onwall's inequality and implementing the estimate $\|\Delta^2 m_n(0)\|_{L^2(\Omega)}\leq \|\Delta^2 m_0\|_{L^2(\Omega)}$, we derive 
\begin{flalign*}
	& \|\Delta^2 m_n(t)\|^2_{L^2(\Omega)} +\int_0^t \|\nabla \Delta^2 m_n(\tau)\|^2_{L^2(\Omega)}\ d\tau \leq \bigg(\|\Delta^2 m_0\|^2_{L^2(\Omega)} \nonumber\\
	&\hspace{1cm} + C(\Omega,T)\  \Big(1+\|u\|^2_{L^\infty(0,T;H^2(\Omega))} + \|m_n\|^4_{L^\infty(0,T;H^2(\Omega))}\Big)\ \|m_n\|^2_{L^2(0,T;H^3(\Omega))}\nonumber \\
	&\hspace{1cm} +C(\Omega,T)\  \Big(1+\|m_n\|^2_{L^\infty(0,T;H^2(\Omega))}\Big)\,\|u\|^2_{L^2(0,T;H^3(\Omega))}\bigg)     \exp\left\{C(\Omega)\, \|m_n\|^2_{L^2(0,T;H^3(\Omega))}\right\}.
\end{flalign*}
\indent Applying the uniform bounds for $m_n$ in $L^\infty(0,T;H^2(\Omega))\cap L^2(0,T;H^3(\Omega))$ obtained in the proof of Theorem 2.2 in \cite{SPSK3},  and using the equality of norms from Lemma \ref{EN} (along with estimate \eqref{ES7} and \eqref{ES10}), we can find an uniform bound of $\{m_n\}$ in $L^\infty(0,T;H^4(\Omega))\cap L^2(0,T;H^5(\Omega))$. Now, proceeding in the same way as  we have done in  that  proof,  we can  find   that   $\{m_n\}$ has a subsequence that converges to a solution  $m\in L^\infty(0,T;H^4(\Omega))\cap L^2(0,T;H^5(\Omega))$ of \eqref{NLP} with $m_t \in L^2(0,T;H^3(\Omega))$. Finally, using Theorem 2 in Section 5.9 of \cite{LCE}, it follows that $m\in C([0,T];H^4(\Omega))$. \\
\indent Moreover, invoking the same regularity results for $u\in L^2(0,T;H^3(\Omega))$ with $u_t\in C([0,T];L^2(\Omega))$, we can conclude that $u\in C([0,T];H^2(\Omega))$. Substituting all these regularities  for the control and the state into equation \eqref{NLP}, we further obtain $m_t\in C([0,T];H^2(\Omega))$, which, together with the regularity $u_t\in C([0,T];L^2(\Omega))$, implies $m_{tt}\in C([0,T];L^2(\Omega))$. This completes the proof. 
\end{proof}

%regularity can indeed be obtained by considering the control and initial data in higher regularity spaces. In particular, as shown in our previous work \cite{SPSK3}, the existence of strong and regular solution can be ensured under suitable regularity assumptions on the control and initial data. Following the approach of Theorem 2.2 in \cite{SPSK3}, if we take the control $u\in L^2(0,T;H^3(\Omega))$ with $u_t\in C([0,T];L^2(\Omega))$ and initial data $m_0\in H^4(\Omega)$, then we can find that the corresponding solution $m\in W^{1,2}(0,T;H^5(\Omega),H^3(\Omega))$.  Therefore, using Theorem 2 in Section 5.9 of \cite{LCE}, it follows that $m\in C([0,T];H^4(\Omega))$ and $u\in C([0,T];H^2(\Omega))$. Substituting these regularities into equation \eqref{NLP}, we further obtain $m_t\in C([0,T];H^2(\Omega))$, which, together with the regularity $u_t\in C([0,T];L^2(\Omega))$, implies $m_{tt}\in C([0,T];L^2(\Omega))$.\\	

\section*{Acknowledgments}
%The authors sincerely thank the reviewer for comments and suggestions which improved the manuscript substantially.  
The authors sincerely thank the anonymous reviewers for their insightful comments and constructive suggestions, which have significantly improved the quality and clarity of this manuscript.

%Bibliography


\begin{thebibliography}{100} % 100 is a random guess of the total number of
%references
\bibitem{SAGC} S. Agarwal, G. Carbou, S. Labb\'{e} and C. Prieur,  Control of a network of magnetic ellipsoidal samples, \textit{ Mathematical Control and Related Fields,} 1(2) (2011), 129-147.

\bibitem{FAKB} F. Alouges and K. Beauchard, Magnetization switching on small ferromagnetic ellipsoidal samples, \textit{ESAIM: Control, Optimization and Calculus of Variation}, 15 (2009), 676-711.

\bibitem{FAAS} F. Alouges and A. Soyeur, \textit{On global weak solutions for Landau-Lifshitz equations: existence and nonuniqueness}, \textit{Nonlinear Analysis: Theory, Methods and Applications,} 18 (1992), 1071-1084.

\bibitem{CTA} C. T. Anh and T. M. Nguyet, Time optimal control of the unsteady 3D Navier-Stokes-Voigt equations, \textit{Applied Mathematics \& Optimization}, 79 (2019), 397–426. 

\bibitem{UADH} U. Atxitia, D. Hinzke and U. Nowak, Fundamentals and applications of
the Landau–Lifshitz–Bloch equation, \textit{Journal of Physics D : Applied Physics}, 50 (2017).

\bibitem{VB1} V. Barbu, The time optimal problem for a class of nonlinear systems, \textit{In: Lasiecka, I., Triggiani, R. (eds) Control Problems for Systems Described by Partial Differential Equations and Applications. Lecture Notes in Control and Information Sciences}, 97 (1987).

\bibitem{VB2} Viorel Barbu, The time optimal control of Navier-Stokes equations, \textit{Systems \& Control Letters}, 30 (1997), 93-100.


%\bibitem{WFB} W. F. Brown, \textit{Micromagnetics}, John Wiley and Sons, New York, 1963.

%	\bibitem{GC} G. Carbou, \textit{Stability of static walls for a three-dimensional model of ferromagnetic material}, Journal de Mathématiques Pures et Appliquées, 93 (2010), 183-203. 

\bibitem{GCPF} G. Carbou and P. Fabrie, Regular solutions for Landau-Lifshitz equation in a bounded domain, \textit{Differential and Integral Equations}, 14 (2001), 213-229.

\bibitem{GCPF2} G. Carbou and P. Fabrie, Regular solutions for Landau-Lifshitz equation in $\mathbb{R}^3$, \textit{Communications in Applied Analysis}, 5 (2001), 17-30. 

%	\bibitem{GCRJ} G. Carbou and R. Jizzini, \textit{Very regular solutions for the Landau-Lifshitz equation with electric current}, Chinese Annals of Mathematics, 39 (2018), 889–916.

%	\bibitem{GCSL} G. Carbou and S. Labb\'{e}, \textit{Stablization of walls for nano-wires of finite length}, ESAIM: Control, Optimization and Calculus of Variation, 18 (2012), 1-21.


\bibitem{TDMKAP} T. Dunst, M. Klein and A. Prohl, Optimal control in evolutionary micromagnetism, \textit{IMA Journal of Numerical Analysis}, 35 (2015), 1342-1380.

%\bibitem{TDAM} T. Dunst, A. K. Majee, A. Prohl and G. Vallet, \textit{ On stochastic optimal control in ferromagnetism},  Archive for Rational Mechanics and Analysis, 233 (2019), 1383–1440. 

\bibitem{LCE} L. C. Evans, \textit{Partial Differential Equations}, American Mathematical Society, 2014.

\bibitem{MFTT} M. Feischl and T. Tran, Existence of regular solutions of the Landau-Lifshitz-Gilbert equation in 3D with natural boundary conditions, \textit{SIAM Journal on Mathematical Analysis}, 49 (2017), 4470-4490.


\bibitem{EFC} E. Fernández-Cara, Motivation, analysis and control of the variable density Navier-Stokes equations, \textit{Discrete and Continuous Dynamical Systems}, 5-6 (2012), 1021-1090.

\bibitem{ECIG} E. Fernández-Cara and I. M. Gayte, Analysis and numerical solution of some minimal time control problems, \textit{Results in Applied Mathematics}, 26:100582 (2025), 1-18.

%	\bibitem{AVF}A. V. Fursikov, M.D. Gunzburger and L.S. Hou, Optimal boundary control for the evolutionary Navier–Stokes system: the three-dimensional case, SIAM Journal on Control and Optimization, 43 (2005), 2191–2232.

%	\bibitem{TLG} T.L. Gilbert, \textit{A Lagrangian formulation of gyromagnetic equation of the magnetization field}, Physical Review Journals Archive, 100 (1955), 1243-1255.

%	\bibitem{BLGM} B. L. Guo and M. C. Hong, \textit{The Landau–Lifshitz equation of the ferromagnetic spin chain and harmonic maps}, Calculus of Variations and Partial Differential Equations, 1 (1993), 311–334.



\bibitem{SGUM} S. Gokhale and U. Manna, Optimal control of the stochastic Landau-Lifshitz-Bloch equation, \href{https://arxiv.org/abs/2305.10861}{arxiv.org/abs/2305.10861}.



%	\bibitem{MHLD} M. Harabech, J. Leliaert, A. Coenea, G. Crevecoeur, D. V. Roost and L. Dupre, \textit{The effect of the magnetic nanoparticle’s size dependence of the relaxation time constant on the specific loss power of magnetic nanoparticle hyperthermia}, Journal of Magnetism and Magnetic Materials, 426 (2017), 206–210. 


\bibitem{PH} P. Hartman, \textit{Ordinary Differential Equations}, Wiley, 1964.

%\bibitem{DHUN} D. Hinzke and U. Nowak, Domain wall motion by the magnonic spin seebeck effect, \textit{Physical Review Letters}, 107 (2011).

%	\bibitem{KHLJ} K. H. Hoffman and L. Jiang, Optimal control of a phase field model for solidification, \textit{Numerical Functional Analysis and Optimization}, 13 (1992), 11-27.

%	\bibitem{JMH} J.M. Hu, L.Q. Chen and C.W. Nan, \textit{High-density magnetoresistive random access memory operating at ultralow voltage at room temperature}, Nature Communications, (2011), 1-8.

%	\bibitem{ZJYW} Z. Jia and Y. Wang, \textit{Global weak solutions to Landau-Lifshitz equations into compact Lie algebras}, Frontiers of Mathematics in China, 14 (2019), 1163–1196.

%	\bibitem{SK} S. Kesavan, \textit{Topics in Function Analysis and Applications}, New Age International Publishers, Third Edition, 2019.

%	\bibitem{KGBM} G. J. Kwiatkowski, M. H. A. Badarneh, D. V. Berkov and P. F. Bessarab, \textit{Optimal Control of Magnetization Reversal in a Monodomain Particle by Means of Applied Magnetic Field}, Physics Review Letters, 2021 Apr 30;126(17).

%	\bibitem{BTK} B. T. Kien, A. R\"osch and D. Wachsmuth, \textit{Pontyagin’s principle for optimal control problem governed by 3D Navier-Stokes equations}, Journal of Optimization Theory and Applications, 173 (2017), 30-55.

%\bibitem{PK} P. Knopf and J. Weber, Optimal control of a Vlasov–Poisson plasma by fixed magnetic field coils, \textit{Applied Mathematics \& Optimization}, 81 (2020), 961–988.	


\bibitem{MKEG} M. Kryder, E. Gage, T. McDaniel, W. Challener, R. Rottmayer, G. Ju, Y. Hsia and M. Erden, Heat assisted magnetic recording, \textit{IEEE Proceeding}, 96 1810 (2008). 

\bibitem{KKWL} K. Kunisch and L. Wang, Time optimal control of the heat equation with pointwise control constraints, \textit{ESAIM: Control, Optimisation and Calculus of Variations}, 19-2 (2013), 460-485. 

%	\bibitem{MKAP} M. Kruzík and A. Prohl, \textit{Recent developments in the modeling, analysis, and numerics of ferromagnetism}, SIAM Review, 48 (2006), 439–483.	


%\bibitem{6} J. Leray, \emph{Sur le mouvement d'un liquide visquex emplissent l'espace, Acta Math}. J. 63 (1934).

\bibitem{YLMB} Y. Li, M. Bin, C. Shi, Time Optimal Feedback Control for 3D Navier–Stokes-Voigt Equations, \textit{Symmetry}, 15-1127 (2023). 


\bibitem{QLBG} Q. Li, B. Guo and M. Zeng, Smooth solutions of the Landau-Lifshitz-Bloch equation, \textit{Journal of Applied Analysis and Computation}, 11 (2021), 2713-2721.


%	\bibitem{YPET} Y. Privat and E. Trélat, Control and stabilization of steady-states in a finite-length ferromagnetic nanowire, ESAIM: Control, Optiization and Calculus of Variation, 21 (2015), 301-323.

\bibitem{KNL} K. N. Le, Weak solutions of the Landau-Lifshitz-Bloch equation, \textit{Journal of Differential Equations}, 261-12 (2016).


\bibitem{HMNO} H. Maurer and N.P. Osmolovskii, Second order optimal conditions for time-optimal bang-bang control,
\textit{SIAM Journal on Control and Optimization}, 42 (2004), 2239–2263.


%\bibitem{LN} L. Nirenberg, On elliptic partial differential equations, \textit{Annali della Scuola Normale Superiore di Pisa - Classe di Scienze, Série} 3, 13 (1959), 115-162.



\bibitem{SPSK} S. Patnaik and K. Sakthivel, Optimal control of the 2D Landau-Lifshitz-Gilbert equation with control energy in effective magnetic field, \textit{Mathematical Control and Related Fields}, 15(2) (2025), 429-460, 

\bibitem{SPSK2} S. Patnaik and K. Sakthivel, Optimal control strategies for the Landau-Lifshitz-Gilbert equation through spatio-temporal control and fixed magnetic field coils, \textit{ESAIM: Control, Optimisation and Calculus of Variations}, \href{https://doi.org/10.1051/cocv/2025086}{DOI: 10.1051/cocv/2025086}. 

\bibitem{SPSK3} S. Patnaik and K. Sakthivel, Magnetization control problem for the 2D and 3D evolutionary Landau-Lifshitz-Bloch equation, \href{https://arxiv.org/pdf/2503.09266}{arxiv.org/pdf/2503.09266}.	

\bibitem{AP} A. Prohl, \emph{Computational Micromagnetism,} Advances in Numerical Mathematics, Teubner,
Stuttgart, 2001.


\bibitem{JCR}J. C. Robinson, \textit{Infinite-Dimensional Dynamical Systems}, Cambridge University Press, 2001.


%	\bibitem{SS}S. Salsa, \textit{Partial Differential Equations in Action}, Springer, Third Edition, 2016.

\bibitem{DTS} D. T. Son and L. T. Thuy, Time optimal control problem of the 3D Navier-Stokes-$\alpha$ equations, \textit{Numerical Functional Analysis and Optimization}, 43:6 (2022), 667-697.


\bibitem{JS} J. Simon, Compact sets in the space $L^p(0,T;B)$, \textit{Annali di Mathematica pura ed applicata}, 146 (1987), 65-96.

%	\bibitem{SSS} S. S. Sritharan, \textit{An optimal control problem in exterior hydrodynamics}, Proceedings of the Royal Society of Edinburgh, 121 (1992), 5-32.

%	\bibitem{RT} R. Temam,  Navier-Stokes Equations and Nonlinear Functional Analysis, SIAM, 1983.

\bibitem{FT} F. Tröltzsch, \textit{Optimal Control of Partial Differential Equations: Theory, Methods and Applications}, AMS Providence, 2010. 

\bibitem{FT1} F. Tröltzsch and D. Wachsmuth,  Second-order sufficient optimality conditions for the optimal
control of Navier-Stokes equations,  \textit{ESAIM: Control Optimization and  Calculus of Variations, } 12 (2006),  93-119.

%	\bibitem{RTR} R. Tyrrell Rockafellar, Lagrange multipliers and optimality, SIAM Review, 35 (1993), 183-238.

%	\bibitem{AV} A. Visintin, \textit{On Landau-Lifshitz equations for ferromagnetism}, Japan Journal of Applied Mathematics, 2 (1985), 69-84. 

%\bibitem{KVAM} K. Vahaplar, A. M. Kalashnikova, A. V. Kimel, D. Hinzke, U. Nowak, R. Chantrell, A. Tsukamoto, A. Itoh, A. Kirilyuk, and Th. Rasing, Ultrafast path for optical magnetization reversal via a strongly nonequilibrium state, \textit{Physical Review Letters}, 103 (2009).

%	\bibitem{GW}  G. Wang, \textit{Optimal controls of 3-dimensional Navier-Stokes equations with state constraints}, SIAM Journal on Control and Optimization, 41 (2002), 583--606.

\bibitem{GW} G. Wang, The existence of time optimal control of semilinear parabolic equations, \textit{Systems \& Control Letters}, 53 (2004), 171-175.


\bibitem{KW}K. Wehrheim, \textit{Uhlenbeck Compactness}, EMS Series of Lectures in Mathematics, 2004.


\end{thebibliography}
\end{document}